\newtheorem{Lemma}{Lemma}[section]
\newtheorem{Proposition}[Lemma]{Proposition}
\newtheorem{Remark}[Lemma]{Remark}
\newtheorem{Claim}[Lemma]{Claim}
\newtheorem{Theorem}{Theorem}
\theoremstyle{break}
\newenvironment{proof}[1][.]%
 {\begin{trivlist}\item[]\textbf{Proof#1 }}%
 {\hspace*{\fill}$\rule{0.3\baselineskip}{0.35\baselineskip}$\end{trivlist}}
\newenvironment{Acknowledgment}%
 {\begin{trivlist}\item[]\textbf{Acknowledgments }}{\end{trivlist}}
\newcolumntype{C}{>{\centering\arraybackslash}p{0.5\linewidth}}
\newcolumntype{R}{>{\centering\arraybackslash}p{0.25\linewidth}}
\newcolumntype{L}{>{\centering\arraybackslash}p{0.15\linewidth}}
\makeatletter\@addtoreset{equation}{section}\makeatother
\renewcommand{\theequation}{\arabic{section}.\arabic{equation}}
\renewcommand{\theLemma}{\arabic{section}.\arabic{Lemma}}
\newcommand{\N}{\mathbb{N}}             
\newcommand{\R}{\mathbb{R}}             
\newcommand{\Z}{\mathbb{Z}}             
\newcommand{\calO}{{\cal{O}}}           
\newcommand{\rmd}{\mathrm{d}}           
\newcommand{\rme}{\mathrm{e}}           
\newcommand{\erfi}{\mathrm{erfi}}           
\newcommand{\Li}{\mathrm{Li}}           
\newcommand{\alg}{\mathrm{alg}}           
\newcommand{\Span}[1]{\mathrm{span}\left\{#1\right\}}
\newcommand{\sech}{\mathrm{sech}}
\newcommand{\csch}{\mathrm{csch}}
\newcommand{\pmat}[1]{\begin{pmatrix}#1\end{pmatrix}}
\DeclareMathOperator*{\argmax}{\arg\!\max}
\title{An explanation of metastability in the viscous Burgers equation with periodic boundary conditions via a spectral analysis}
\author{\vspace{-2ex}Kelly McQuighan\\\vspace{-2ex} Department of Mathematics and Statistics\\\vspace{-2ex}Boston University\\Boston, MA  02215, USA \and \vspace{-2ex}C. Eugene Wayne\\\vspace{-2ex} Department of Mathematics and Statistics\\\vspace{-2ex}Boston University\\Boston, MA  02215, USA}
\begin{document}
\maketitle
\begin{abstract}\noindent
A ``metastable solution" to a differential equation typically refers to a family of solutions for which solutions with initial data near the family converge to the family much faster than evolution along the family. Metastable families have been observed both experimentally and numerically in various contexts; they are believed to be particularly relevant for organizing the dynamics of fluid flows. In this work we propose a candidate metastable family for the Burgers equation with periodic boundary conditions. Our choice of family is motivated by our numerical experiments. We furthermore explain the metastable behavior of the family without reference to the Cole--Hopf transformation, but rather by linearizing the Burgers equation about the family and analyzing the spectrum of the resulting operator. We hope this may make the analysis more readily transferable to more realistic systems like the Navier--Stokes equations. Our analysis is motivated by ideas from singular perturbation theory and Melnikov theory. 
\end{abstract}
\section{Introduction}
In the study of differential equations one often is interested in understanding the long-term asymptotic behavior of solutions; the long term behavior could include, for example, convergence to a periodic orbit or a steady-state. One typical approach is to prove the existence of a particular solution and then to argue that nearby initial data converge to that solution; in the case of a steady-state or periodic orbit, such arguments often involve computations of the linear spectrum. 

In this work we address a slightly different question, which arises when the asymptotic state only emerges after a ``long" time; in this case, it may be that the intermediate transient behavior of the system is physically relevant. In other words, we are not interested in \underline{what} the asymptotic state is, but \underline{how} solutions with a wide class of initial data approach it. To address this question we analyze what are known as ``metastable" solutions. The term metastable solution often refers to a family of profiles with the following properties: (1) a profile within this family evolves within the family and tends asymptotically toward the long-time asymptotic state (which is typically a boundary point of the metastable family); (2) solutions with ``nearby" initial data remain near the family for all forward times; and (3) the timescale on which solutions with nearby initial data approach the family is much faster than the evolution within the family towards the asymptotic state. Property (3) is what makes metastable solutions of physical interest.

Metastable solution families are of particular interest in fluid dynamics. For example, in the Navier--Stokes equation with periodic boundary conditions
\begin{align}\label{eq:NS}
&\partial_t \vec u =\nu\Delta\vec u-\vec u\cdot\nabla\vec u+\nabla p, \qquad \nabla \cdot \vec u = 0, \qquad \vec u\in\R^2,\; \nu\ll 1\nonumber\\
&\vec u(x, y, t) = \vec u(x+2\pi, y, t),\quad\text{and}\quad\vec u(x,y, t) = \vec u(x, y+2\pi, t),
\end{align}
which describes two-dimensional viscous fluid flows, metastable vortex pairs known as ``dipoles" were numerically observed \cite{Matthaeus1991, Yin2003}; the dipoles emerge quickly and persist for long times before eventually converging to the trivial state. The metastable states described in \cite{Matthaeus1991, Yin2003} are characterized in terms of their vorticity $\omega$, defined as $\omega := \nabla\times\vec u$. In \cite{Yin2003} a second metastable family known as ``bar" states---solutions with constant vorticity in one spatial direction and periodic vorticity in the other---were observed; which of the two candidate metastable families dominates the dynamics depends on the initial data. 

A related context in which metastability has been observed and studied is Burgers equation.
Although the Burgers equation is unphysical, it is nevertheless relevant to fluid dynamics since it is, in some sense, the one-dimensional simplified analog of the Navier--Stokes equation. Thus, one often uses the Burgers equation as a test case for Navier--Stokes: one hopes that by first observing and analyzing some phenomenon in the Burgers equation, that insight can be translated into an understanding of related phenomena in Navier--Stokes.  Metastable solutions in Burgers equation were observed numerically
in the viscous Burgers equation on an unbounded domain \cite{Kim2001} in the so-called ``scaling variables"
\begin{align}\label{eq:scaledBurgers}
\partial_\tau w = \nu \partial_\xi^2w+\frac12\partial_\xi(\xi w)-ww_\xi\qquad w\in \R, \;\nu\ll1.
\end{align}
The scaling variables 
\begin{align*}
\xi = \frac x{\sqrt{1+t}},\qquad\tau = \ln(t+1),\quad\text{and}\quad u(x,t) = \frac1{\sqrt{1+t}}w\left(\frac x{\sqrt{1+t}}, \ln(1+t) \right)
\end{align*}
have been defined so that a diffusion wave--a strictly positive triangular profile which approaches zero for $|x|\to\infty$---is a steady state solution to (\ref{eq:scaledBurgers}) (otherwise, all solutions to Burgers equation in the unscaled variables $\partial_t u = \nu \partial_x^2u-uu_x$ approach the zero solution as $t\to+\infty$). In \cite{Kim2001} the authors observe that ``diffusive N-waves"---profiles with a negative triangular region immediately followed by a positive triangular region so that the profile resembles a lopsided backwards ``N"---quickly emerge before the solution converges to a diffusion wave. 

Burgers equation is much more amenable to analysis than the Navier-Stokes equation and there has
been a fair amount of theoretical work to explain the observations of \cite{Kim2001}.  Already in \cite{Kim2001}, the
authors used the Cole-Hopf transformation to derive an analytical expression for the diffusive
N-waves.   In \cite{Beck2009}, the authors provide a more dynamical systems motivated explanation of
metastability.  First they constructed a center-manifold for (1.2) consisting of the diffusion waves, denoted $A_M(\xi)$, which is parametrized by the solution mass.  Each of these diffusion waves represents the
long-time asymptotic state of all integrable solutions with initial mass $M$ and they are also fixed points
in the scaling variables.  Through each of these fixed points there is a one-dimensional manifold,
parameterized by $\tau$, consisting of exactly the diffusive $N$-waves.  Then, using the Cole-Hopf
transformation, the authors show that solutions converge toward the manifold of $N$-waves on a time scale
of order $\tau = {\cal O}(|\ln \nu||)$, that solutions remain near $w_N(\xi,\tau)$ for all future times, and that the evolution along $w_N(\xi,\tau)$ towards $A_M(\xi)$ is on a time scale of the order $\tau = \calO(1/\nu)$. In particular, convergence to the family is faster than the subsequent evolution along the family. We emphasize that their analysis makes strong use of the Cole-Hopf transformation.

In \cite{Beck2013} the authors proposed an explanation of the metastability of the bar-states of (\ref{eq:NS}) as follows. They first
propose as candidates for the metastable family the exact solutions of the Navier-Stokes equations
with vorticity distribution
\[
\omega^b(x,y,t) = \rme^{-\nu t}\cos(x)\footnote{Alternatively, the bar state could be $\widetilde w^b(x,y,t) = \rme^{-\nu t}\sin(x)$, or the solution could instead be periodic in the $y$ direction and constant in the $x$ direction.},
\]
which is again parametrized by time. Solutions in this family converge 
to the long-time limit (which is the zero solution in this case) on the viscous time scale 
$t \sim \frac{1}{\nu}$. In order to understand the convergence of solutions with nearby initial data to the metastable family, the authors linearize the vorticity formulation of (\ref{eq:NS})
\begin{align}\label{eq:vorticity}
\partial_t\omega = \nu\Delta\omega-\vec u\cdot\nabla\omega, \qquad \vec u = (-\partial_y\Delta^{-1} \omega, \partial_x\Delta^{-1}\omega).
\end{align}
about $\omega^b(x,y,t)$. The linearization results in a nonlocal time-dependent linear operator \[\mathcal L(t) = \nu\Delta-a\rme^{-\nu t}\sin x\partial_y(1+\Delta^{-1}).\] Using hypercoercivity techniques motivated by the work of Villani \cite{Villani2009} and Gallagher, Gallay, and Nier \cite{Gallagher2009}, the authors show that solutions to a modified operator $\mathcal L^a(t) = \nu\Delta-a\rme^{-\nu t}\sin x\partial_y$, which differs from $\mathcal L(t)$ by removing the non-local, but relatively compact, term, decay with rate at least $\rme^{-\sqrt{\nu}t}$. Additionally, they provide numerical evidence that the real part of the least negative eigenvalue for the nonlocal operator $\mathcal L(t)$ is proportional to $\sqrt\nu$. These arguments, in combination with the fact that the rate of decay of solutions to (\ref{eq:vorticity}) to zero is given by the much slower viscous time scale  provides a mathematical explanation for the metastable behavior of the family of bar states. 

What is notable is that the mechanism for metastability as well as the relevant time scales are different in each case \cite{Beck2009} versus \cite{Beck2013}. Thus, the goal of this work is to re-visit the Burgers equation, albeit with periodic boundary conditions so that the boundary conditions are more similar to those of (\ref{eq:NS}), in order to devise a mathematical explanation for metastability which is more easily transferable to Navier--Stokes. To that end, we intentionally avoid the Cole--Hopf transformation and instead use spectral analysis from the linearization about the candidate metastable family. We find that the spectrum, to leading order, does not depend on the viscosity $\nu$, even though our analysis depends on the presence of the viscosity term in the equation (and thus the calculations below do not apply to the inviscid equation). This is in contrast to the results from \cite{Beck2013} for the Navier--Stokes equation in which the rate of approach toward the metastable solutions occurs at a $\nu$ dependent rate, albeit a much faster rate than the $\nu$ dependent time of approach toward the final asymptotic state. More generally, the linear operator that we analyze is not self-adjoint. Such operators arise frequently, for example, in weakly viscous fluid dynamics and we hope that the methods develop in this work could be applied to wide class of non-self-adjoint spectral problems.

From a technical perspective, the linearization about the metastable states leads to a singularly perturbed eigenvalue problem, in which the perturbation parameter is the viscosity $\nu$. Our strategy is to construct the eigenfunction-eigenvalue pairs in each of two spatial scaling regimes (denoted the ``slow" and ``fast" scales) and then to glue the eigenfunction pieces together in an appropriate ``overlap" region (see Figure~\ref{fig:tildePhi} for a schematic representation). We show, in fact, that the eigenvalues are given, to leading order, by the slow-scale eigenvalues; the rigorous ``gluing" of the fast and slow solutions is done with the aid of a Melnikov-like computation which gives the first order correction of the eigenvalues. The use of such Melnikov-like computations for piecing together solutions has a long tradition, generally called Lin's method \cite{Lin90}, which has been applied to the construction of eigenfunctions in, for example, \cite{Sandstede98}. The idea of piecing together slow and fast eigenfunctions in a singularly perturbed eigenvalue problem follows, for example, from \cite{Doelman98}.

It is worth noting another context in which singularly perturbed eigenvalue problems have
arisen in connection with a slightly different type of
metastability, including in variants of Burger's equation.  In
\cite{Ward95, Ward99} metastability refers to the very slow motion of internal layers in
nearly steady states of  reaction diffusion equations and diffusively perturbed conservation laws.
While different in details and physical context, the notion of metastability in these papers is
similar in spirit to our discussion in that it also describes the slow motion along a family
of solutions (in this cases, solutions in which the internal layer occurs at different positions) before
the system reaches its final state.
The motion of those internal layers is explained by an exponentially small shift in the zero eigenvalue of the 
operator describing the  equation linearized about a stationary state.  In contrast, in our problem, the 
zero eigenvalue is unchanged, regardless of which member of the family of metastable solutions
we linearize around, but the remaining eigenvalues (or at least the four additional eigenvalues
that we compute here) undergo exponentially small shifts.

Another recent study of metastability in the Navier--Stokes equation, which is similar to our work in context, but very different in methods
is the study of the inviscid limit of the Navier--Stokes equations in the neighborhood of the Couette flow, by Bedrossian, Masmoudi and
Vicol \cite{Bedrossian15} (see also  \cite{Bedrossian13}). In this paper the authors prove an enhanced stability of the Couette
flow by using carefully chosen energy functionals.  They prove that for times less than $\calO({Re}^{1/3})$, the system approaches
the Couette flow in a way governed by the inviscid limit (i.e. the Euler equations) while for time scales longer than this viscosity effects
dominate; here ${Re}$ is the Reynold's number of the flow.  Since our results show that our metastable family attracts nearby solutions at
a rate which is, to leading order, independent of the viscosity, we believe that they are analogous to the initial phase of the evolution
analyzed in \cite{Bedrossian15} in which inviscid effects dominate.  It would be interesting to see if the transition to viscosity dominated
evolution could be observed in this Burgers equation context as well.
\section{Set-up and statement of main results}\label{sec:setup}
In this section we discuss our candidate family of metastable solutions, denoted $W(x,t;\nu, \Delta x, c)$, to the viscous Burgers equation with periodic boundary conditions
\begin{alignat}{3}\label{eq:Burgers}
\partial_t u =& \nu\partial_x^2u-uu_x\qquad&& \nu\ll 1,\; x\in\R,\; t\in\R^+\nonumber\\
u(x,0) =& u_0(x)\qquad&&u_0\in H_{per}^1([0,2\pi))\nonumber\\
u(x+2\pi, t) =& u(x,t).
\end{alignat}
We also present numerical and analytical justification for our choice. The analytical justification given in Section~\ref{sec:CH} relies, again, heavily on the Cole-Hopf transformation. Thus, although it provides powerful evidence for the behavior of solutions near $W(x,t;\nu, \Delta x, c)$, the result provides no insight into techniques one might use to analyze Navier--Stokes. Thus we provide an alternative explanation which relies on information about the spectrum of the linear operator obtained from linearizing (\ref{eq:Burgers}) about the metastable family $W(x,t;\nu, \Delta x, c)$; the statement and discussion of these results can be found in Sections~\ref{sec:outline} and \ref{sec:justify}.
In what follows we make the technical assumption that the primitive of $(u_0(x)-\overline u)$ attains a unique global maximum on $[0,2\pi)$, where $\overline u =\frac1{2\pi} \int_0^{2\pi} u_0(x)dx$. We remark that this assumption is generic since if the primitive of $u_0(x)$ does not attain a global maximum on $[0,2\pi)$ then for all $\varepsilon>0$ there exists a function $v(x)$ with $\|v\|_{\mathrm H^1_{per}}\le\varepsilon$ such that the primitive of $u_0(x)+v(x)$ does attain a global maximum, where
\[
\|v\|_{\mathrm H^1_{per}}^2 = \int_0^{2\pi} \left[v(x)^2+v'(x)^2\right]dx
\]
is the usual periodic $\mathrm H^1$ norm.

\subsection{Family of metastable solutions}\label{sec:Whit}
It is well known, using the Cole-Hopf transformation, that
\begin{align}\label{eq:CH}
u(x,t) = -2\nu\frac{\psi_x(x,t)}{\psi(x,t)}
\end{align}
is a solution to Burgers on the real line if $\psi(x,t)$ satisfies the heat equation
\begin{align}\label{eq:heat}
\psi_t =& \nu\psi_{xx}\qquad \nu\ll 1,\; x\in\R,\; t\in\R^+.
\end{align}
A family of periodic solutions to (\ref{eq:heat}) can be constructed by placing heat sources on the real line spaced $2\pi$ apart centered at $x=\pi(2n-1)$ 
\begin{align}\label{eq:psi}
\psi^W(x,t; \nu):=\frac 1{\sqrt{4\pi \nu t}}\sum_{n\in\Z}\exp\left[\frac{-(x+\pi-2n\pi)^2}{4\nu t}\right].
\end{align}
Then every function in the family
\begin{align}\label{eq:uW}
W_0(x,t; \nu) :=-2\nu\frac{\psi^W_x}{\psi^W}= \frac 1t \frac{\sum_{n\in\Z}(x+\pi-2n\pi)\exp\left[\frac{-(x+\pi-2n\pi)^2}{4\nu t}\right]}{\sum_{n\in\Z}\exp\left[\frac{-(x+\pi-2n\pi)^2}{4\nu t}\right]}
\end{align}
is $2\pi$-periodic and hence a solution to (\ref{eq:Burgers}). We have denoted solutions (\ref{eq:uW}) by $W_0$ to indicate the fact that one can find them in, for example, the classic text by G.B. Whitham \cite[\S 4.6]{Whitham}. 
Using formula (\ref{eq:uW}) one can check that $W_0(n\pi,t; \nu) =0$ and that $W_0$ is an odd function about $n\pi$, for $n\in\Z$.

The family of solutions (\ref{eq:uW}) is parametrized by $t$. We can extend the family to include two additional parameters as follows. Firstly, we can replace $x$ by $x-\Delta x$, effectively shifting the origin of the $x$-axis.  Next, suppose $u(x,t)$ is a solution to (\ref{eq:Burgers}). Then $u_c(x,t) := c+u(x-ct, t)$ solves (\ref{eq:Burgers}) as well since
\begin{align*}
\partial_t u_c = \partial_t u-c\partial_x u=\nu\partial_x^2 u_c - (u_c-c)\partial_xu_c -c\partial_x u = \nu\partial_x^2 u_c - u_c(u_c)_x.
\end{align*}
Thus we define an extension of (\ref{eq:uW}) by $W(x, t; \nu, \Delta x, c):=c+W_0(x-\Delta x-ct,t; \nu)$. We remark that if $\psi(x,t)$ is periodic,
\[
\int_{-\pi}^\pi -2\nu\partial_x\psi(x,t)dx = 0
\]
and thus, since
\[
\int_{-\pi}^\pi W(x,t;\nu, \Delta x, c)dx = 2\pi c,
\]
$W(x, t; \nu, \Delta x, c)$ can not be obtained via the Cole-Hopf transformation of a periodic function unless $c=0$. 

We will need the following estimates of $W_0$ and its derivatives.
\begin{Proposition}\label{prop:estW}
Fix $\nu > 0$, $0<\varepsilon_0\ll1$. Then there exists $0<C(\varepsilon_0)<\infty$ such that
\begin{align}\label{est:W}
&\sup_{|x|\le\pi}\left|W_0(x, t; \nu) - \frac1t\left[x-\pi\tanh\left(\frac{\pi x}{2\nu t} \right)\right]\right|\le \frac{C(\varepsilon_0)}t\rme^{-1/\nu t }\nonumber\\
&\sup_{|x|\le\pi}\left|\partial_xW_0(x,t; \nu) - \frac1t\left[1-\frac{\pi^2}{2\nu t}\sech^2\left(\frac{\pi x}{2\nu t} \right)\right]\right|\le \frac {C(\varepsilon_0)}{t^2}\rme^{-1/\nu t }\nonumber\\
&\sup_{|x|\le\pi}\left|\partial_tW_0(x,t; \nu) -  \frac1{t^2}\left[-x+\pi\tanh\left(\frac{\pi x}{2\nu t} \right)+\frac{\pi^2 x}{2\nu t}\sech^2\left(\frac{\pi x}{2\nu t} \right)\right]\right|\le\frac {C(\varepsilon_0)}{t^3}\rme^{-1/\nu t }
\end{align}
for all $0<\nu t<\varepsilon_0$.
\end{Proposition}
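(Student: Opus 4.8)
The plan is to leverage the Cole--Hopf structure of $W_0$ itself. By construction $W_0 = -2\nu\,\psi^W_x/\psi^W = -2\nu\,\partial_x\log\psi^W = -2\nu\,\partial_x\log\Psi$, where
\[
\Psi(x,s) := \sum_{n\in\Z}\exp\!\Big[-\tfrac{(x+\pi-2n\pi)^2}{4s}\Big],\qquad s := \nu t ,
\]
since the $x$-independent factor $(4\pi s)^{-1/2}$ in $\psi^W$ drops out under $\partial_x$. I would split off the two ``central'' Gaussians $n=0,1$, writing $\Psi = \Psi_2 + R$ with $\Psi_2(x,s) := \rme^{-(x+\pi)^2/(4s)}+\rme^{-(x-\pi)^2/(4s)}$, then show that $-2\nu\,\partial_x\log\Psi_2$ and its $x$- and $t$-derivatives reproduce the main terms of the proposition \emph{exactly}, and that the correction $-2\nu\,\partial_x\log(1+R/\Psi_2)$ and its derivatives are exponentially small in $1/s$ with room to spare.

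For the exact identity, completing the square gives $\Psi_2 = 2\,\rme^{-(x^2+\pi^2)/(4s)}\cosh(\pi x/(2s))$, so $\partial_x\log\Psi_2 = -\tfrac{x}{2s}+\tfrac{\pi}{2s}\tanh(\pi x/(2s))$; since $\nu/s = 1/t$ this gives $-2\nu\,\partial_x\log\Psi_2 = \tfrac1t\big[x-\pi\tanh(\pi x/(2\nu t))\big]$, precisely the quantity subtracted in the first line of the estimate. Differentiating once more in $x$, and applying $\partial_t = \nu\,\partial_s$ to $-2\nu\,\partial_x\log\Psi_2$, is an elementary computation that yields exactly the remaining two main terms (the subtracted quantities in the second and third lines).

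For the tail, note that for $|x|\le\pi$ the shifts $y_n := x+\pi-2n\pi$ with $n\notin\{0,1\}$ satisfy $y_{-m}\ge 2\pi m$ and $y_{m+1}\le-2\pi m$ for $m\ge1$; hence $|y_n|\ge 2\pi$, and more importantly $R$ together with each of its $x$- and $s$-derivatives is a finite combination of Hermite-type multiples of $\rme^{-y_n^2/(4s)}$ in which the $m=1$ terms dominate, so each such derivative is bounded by (a polynomial in $1/s$) $\times\,\rme^{-\pi^2/s}$. On the other hand $\min(y_0^2,y_1^2)\le\pi^2$ on $|x|\le\pi$, so $\Psi_2\ge \rme^{-\pi^2/(4s)}$. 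Consequently, for $0<s<\varepsilon_0$, the quantity $\eta := R/\Psi_2$ and all its $x$- and $s$-derivatives are bounded by (a polynomial in $1/s$) $\times\,\rme^{-3\pi^2/(4s)}$, and since $1+\eta\ge\tfrac12$ the same holds for $\log(1+\eta)$ and its derivatives.

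Finally I would assemble the pieces: $W_0 - \tfrac1t\big[x-\pi\tanh(\pi x/(2\nu t))\big] = -2\nu\,\partial_x\log(1+\eta)$, and similarly for the two derivative estimates after applying $\partial_x$ or $\partial_t=\nu\,\partial_s$. Substituting the tail bounds, rewriting each power of $\nu$ via $\nu = s/t$, and using that $s^{-N}\rme^{-(3\pi^2/4-1)/s}$ is bounded on $s>0$ for every $N$ --- this is where the spare decay $3\pi^2/4-1>0$ is spent, leaving the stated rate $\rme^{-1/s}=\rme^{-1/\nu t}$ --- yields the three bounds, with prefactors of size $1/t$, $1/t^2$, $1/t^3$. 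The proof is conceptually straightforward; the only real labor --- and hence the step most likely to hide a slip --- is the bookkeeping: tracking how many factors of $1/s$, $1/t$ and $\nu$ each differentiation introduces and checking that the exponential gap absorbs all of them. The constants may be taken to depend on the fixed $\nu$ and on $\varepsilon_0$. One can also halve the work by using that $W_0$ is odd about $x=0$, consistently with the three approximants being odd, even and odd, and restricting throughout to $x\in[0,\pi]$.
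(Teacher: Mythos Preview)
Your proposal is correct and takes essentially the same approach as the paper: isolate the two dominant Gaussians $n=0,1$, check that they reproduce the main terms exactly, and control the remaining modes by a geometric-series bound of the form $r=\rme^{-\pi^2/(\nu t)}$. The paper packages this through the auxiliary ratio $S(x,t;\nu):=-1+2\sum_n n\exp_n/\sum_n\exp_n$ (so that $W_0=x/t-(\pi/t)S$, $\partial_xW_0=1/t-(\pi/t)S_x$, etc.) and then writes $S=\tanh(\pi x/2\nu t)+\mathcal R$ with $|\mathcal R|\le C\rme^{-\pi^2/(2\nu t)}$; your identity $\Psi_2=2\rme^{-(x^2+\pi^2)/(4s)}\cosh(\pi x/2s)$ and tail bound on $R/\Psi_2$ are exactly the same computation rewritten through $\log\Psi$, which makes the main-term identification a one-liner and handles all three estimates by a single differentiation rule rather than by re-deriving formulas for $S_x$ and $S_t$ separately.
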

We remark that since $W_0(x,t;\nu)$ is periodic, these $\mathrm L^\infty$ estimates can be converted into $\mathrm L^p_{per}$ estimates for any $1\le p<\infty$.
\begin{proof}
Due to the fact that $W_0(x,t; \nu)$ is an odd function centered about $x=0$, we prove the estimates for $x\in[0,\pi]$. 
Define
\begin{align*}
S(x,t; \nu):=&-1+\frac{2\sum_{n\in\Z} n\exp\left[\frac{-(x+\pi-2n\pi)^2}{4\nu t}\right]}{\sum_{n\in\Z}\exp\left[\frac{-(x+\pi-2n\pi)^2}{4\nu t}\right]}
\end{align*}
so that
\begin{align*}
W_0(x, t; \nu)=&\frac xt -\frac{\pi}tS(x,t; \nu)\\
\partial_xW_0(x, t; \nu)=&\frac 1t -\frac{\pi}tS_x(x,t; \nu)\\
\partial_tW_0(x, t; \nu)=&-\frac x{t^2} +\frac{\pi}{t^2}S(x,t; \nu)-\frac\pi t S_t(x, t; \nu).
\end{align*}
Thus it remains to estimate $S(x, t; \nu)$ and its derivatives. We factor $\exp[-(x+\pi)^2/4\nu t]$ out of both the numerator and denominator, define
\begin{align}\label{eq:expbounds}
\exp_n(x; t, \nu)&:=\exp\left[-\pi[-nx+n^2\pi-n\pi]/\nu t\right] =\left\{\begin{array}{ccc}
\exp\left[\frac{-\pi n[(n-1)\pi-x]}{\nu t}\right]&:&n\ge 0\\
\exp\left[\frac{\pi n[(-n+1)\pi+x]}{\nu t}\right]&:&n\le 0
\end{array}\right\},
\end{align}
and rearrange to get
\begin{align*}
=&\frac{-\exp\left[\frac{-\pi x}{2\nu t} \right]+\exp\left[\frac{\pi x}{2\nu t} \right]+\exp\left[\frac{-\pi x}{2\nu t} \right]\sum_{n\neq0, 1} (2n-1)\exp_n(x; t, \nu)}{\exp\left[\frac{-\pi x}{2\nu t} \right]+\exp\left[\frac{\pi x}{2\nu t} \right]+\exp\left[\frac{-\pi x}{2\nu t} \right]\sum_{n\neq0,1}\exp_n(x; t, \nu)}\nonumber\\
=&\tanh\left(\frac{\pi x}{2\nu t} \right)+\mathcal R(x;\nu, t)
\end{align*}
where
\begin{align*}
\mathcal R(x;\nu, t):=&
\frac{\exp\left[\frac{-\pi x}{2\nu t} \right]\sum_{n\neq0, 1} \left[2n-1-\tanh\left(\frac{\pi x}{2\nu t} \right)\right]\exp_n(x; t, \nu)}{\exp\left[\frac{-\pi x}{2\nu t} \right]\sum_{n\in\Z}\exp_n(x; t, \nu)}
\end{align*}
Define $r:=\exp\left[-\pi^2/\nu t\right]$; we have that $0\le r<1$ for all $0\le\nu t\le\varepsilon_0$. Then, using (\ref{eq:expbounds}), we see that for all $x\in[0,\pi]$
\[
\exp_{n}(x; t, \nu) \le  r^{|n|} \qquad \forall n\neq 0, 1, 2 
\]
and
\[
\exp\left[\frac{-\pi x}{2\nu t}\right]\exp_2(x; t, \nu) = \exp\left[\frac{-\pi(4\pi-3x)}{2\nu t}\right]\le \exp\left[\frac{-\pi^2}{2\nu t}\right]=r^{1/2}.
\]
Using the fact that the denominator of $\mathcal R$ is greater than or equal to one since it is a sum of positive terms and the leading term
\begin{align*}
\exp\left[\frac{-\pi x}{2\nu t}\right]\exp_1(x;\nu, t) = \exp\left[\frac{\pi x}{2\nu t}\right] \ge 1\qquad\forall x\in[0,\pi],
\end{align*}
we find
\begin{align*}
\left|\mathcal R(x;\nu, t)\right|\le&4r^{1/2}+\exp\left[\frac{-\pi x}{2\nu t} \right]\sum_{n\neq0, 1,2 } 2(|n|+1)r^{|n|}\\
\le&4r^{1/2}+4\frac{r(2-r)}{(1-r)^2}.
\end{align*}
Thus, there exists $0<C(\varepsilon_0)<\infty$ such that $|\mathcal R(x; \nu, t)|\le C(\varepsilon_0)\rme^{-\pi^2/2\nu t}$ for all $0\le \nu t\le\varepsilon_0$ and $x\in[0,\pi]$.
The same transformations and estimates give
\begin{align*}
\left|S_x(x,t; \nu)-\frac{\pi}{2\nu t}\sech^2\left(\frac{\pi x}{2\nu t} \right)\right|\le \frac{C(\varepsilon_0)}t\rme^{-1/\nu t} \quad\text{and}\quad
\left|S_t(x,t; \nu) +\frac{\pi x}{2\nu t^2} \sech^2\left(\frac{\pi x}{2\nu t} \right)\right|\le \frac{C(\varepsilon_0)}{t^2}\rme^{-1/\nu t}
\end{align*}
after potentially making $C(\varepsilon_0)$ larger. 
\end{proof}
\subsection{Solutions via the Cole-Hopf transformation}\label{sec:CH}
Based on our numerical simulations (see Section~\ref{sec:numerics}), we anticipate that solutions to (\ref{eq:Burgers}) rapidly approach a profile in the family $W(x, t; \nu, \Delta x, c)$, and that the specific member in the family that the solution approaches depends on the initial data $u_0(x)$. In Section~\ref{sec:Whit} we discussed the Cole-Hopf transformation but did not take the initial data into account; we address the initial value problem now and show how the initial data can be used to determine which specific profile $W(x, t; \nu, \Delta x, c)$ the solution is expected to approach.

A solution $u(x,t)$ given by the Cole-Hopf transformation (\ref{eq:CH}) will satisfy the Burgers equation on the real line with initial data $u_0(x)$ provided $\psi(x,t)$ satisfies the initial value problem
\begin{alignat}{3}\label{eq:heat2}
\psi_t =& \nu\psi_{xx}\qquad&& \nu\ll 1,\; x\in\R,\; t\in\R^+\nonumber\\
\psi(x,0) = \psi_0(x) =& \rme^{\frac 1{2\nu}F(x; u_0)},&\quad& F(x; u_0):=-\int_0^x u_0(s)\rmd s.
\end{alignat}
Solutions to (\ref{eq:heat2}) can be expressed as a convolution with the heat kernel $G_t:\R\to\R^+$
\begin{align*}
\psi(x,t) =&\int_{-\infty}^\infty \psi_0(y)G_t(x-y)\rmd y= \frac 1{\sqrt{4\pi\nu t}}\int_{-\infty}^\infty \rme^{\frac 1{2\nu}\left[F(y; u_0)-\frac 1{2t}(x-y)^2\right]}\rmd y.
\end{align*}
As was argued in \cite{Pelinovsky2012}, if one additionally assumes that $\int_0^{2\pi} u_0(s)\rmd s=0$ then $\psi_0(x)$ is $2\pi$-periodic, and hence so are $\psi(x,t)$ and
\[
u_0^{CH}(x,t; \nu, u_0) := -2\nu\frac{\psi_x(x,t)}{\psi(x,t)}=\frac1t\frac{\int_{-\infty}^\infty (x-y) \exp\left[\frac 1{2\nu}\left(-\frac {(x-y)^2}{2t}+F(y; u_0)\right)\right]\rmd y}{\int_{-\infty}^\infty \exp\left[\frac 1{2\nu}\left(-\frac {(x-y)^2}{2t}+F(y; u_0)\right)\right]\rmd y}.
\]
Thus $u_0^{CH}(x,t; \nu, u_0)$ is a solution to the periodic problem (\ref{eq:Burgers}) with initial data $u_0^{CH}(x,t; \nu, u_0) = u_0(x)$. We assume that $F(y; u_0)$ has a single global maximum in the interval $y\in[-\pi,\pi)$ located at $y=y_0$
\[
y_0 = \argmax_{y\in[-\pi, \pi]}\left(-\int_0^y u_0(s) \rmd s\right).
\]
Then the solution $u_0^{CH}$ can be estimated as 
\begin{align}\label{est:CH}
u_0^{CH}(x,t;\nu, u_0) = \frac1t\left[x- y_0-\pi-\pi\tanh\left(\frac {\pi(x-y_0-\pi) }{2\nu t}\right)+\calO\left(\sqrt\nu+\frac1t\right)\right],
\end{align}
which can be seen by using, for example, Laplace's method; since the goal of this work is to get away from the Cole-Hopf transformation, we leave the details to the reader. Comparison of (\ref{est:CH}) with (\ref{est:W}) indicates that solutions to (\ref{eq:Burgers}) will asymptotically approach $W_0(x,t; \nu, \Delta x)$, and that $\Delta x$ is close to $y_0+\pi$, where $y_0$ depends on the initial data. If $c:=\frac1{2\pi}\int_0^{2\pi} u_0(s)\rmd s \neq 0$ then
\[
u^{CH}(x,t; \nu, u_0, c) = c+u_0^{CH}(x-ct,t; \nu, u_0-c).
\]

\subsection{Numerical results}\label{sec:numerics}
The discussion in Sections~\ref{sec:Whit} and \ref{sec:CH} indicates that $W(x, t; \nu, \Delta x, c)$ should be our candidate metastable solution. Numerical simulations indicate the same result. We numerically computed solutions to (\ref{eq:Burgers}) in Python using Gudonov's scheme for conservative PDEs. Letting $h=dx$ and $k=dt$, the CFL condition is
\[
k = \mathrm{min}\left\{\frac {\lambda h}{\max[u(x,0)]}, \lambda h^2 \right\}
\]
for $\lambda<1$. We used $\lambda=0.5$. The initial condition $u(x,0)$ was given by 
\[
u(x, 0) = a_0+\sum_{n=1}^m [a_n\sin(jx)+b_n\cos(jx)],
\]
where $m$ is the number of modes and the coefficients $a_n$ were randomly generated. Due to the symmetry of the modes for $j\ge 1$, the mean of $u(x,0)$, denoted $\overline{u(x,0)}$, is given by $a_0$; furthermore, due to the periodic boundary conditions the mean of any solution is preserved since
\begin{align*}
\frac\rmd{\rmd t} \overline u = \int_{-\pi}^\pi u_t\rmd x = \int_{-\pi}^\pi [\nu u_{xx}-uu_x]\rmd x = \left[\nu u_x-\frac12 u^2\right]\bigg|_{-\pi}^\pi = 0.
\end{align*}
The time series for a solution with $a_0=0$ is shown in Figure~\ref{fig:numerics}. We find that $u(x,t)$ rapidly approaches a solution $W_0(x-\Delta x,t;\nu)$, defined in (\ref{eq:uW}); for all future times, the solution converges to zero in a manner resembling the behavior of $W_0(x-\Delta x,t;\nu)$. When $a_0\neq0$ we find that the solution is vertically centered around $a_0$ moves to the left for $a_0<0$ and to the right for $a_0>0$; consistent with the solution \[W(x, t; \nu, \Delta x, a_0):=a_0+W_0(x-\Delta x-a_0t,t; \nu)\] defined immediately before Proposition~\ref{prop:estW}. Although we show only one sample time series here, we ran multiple experiments with different initial conditions; our results indicate that the evolution of a wide class of initial data evolve in a qualitatively similar fashion to that shown in Figure~\ref{fig:numerics}.

\begin{figure}[h]
\centering
\subfloat[\small $t=0.00$]{\includegraphics[width = 0.3\textwidth]{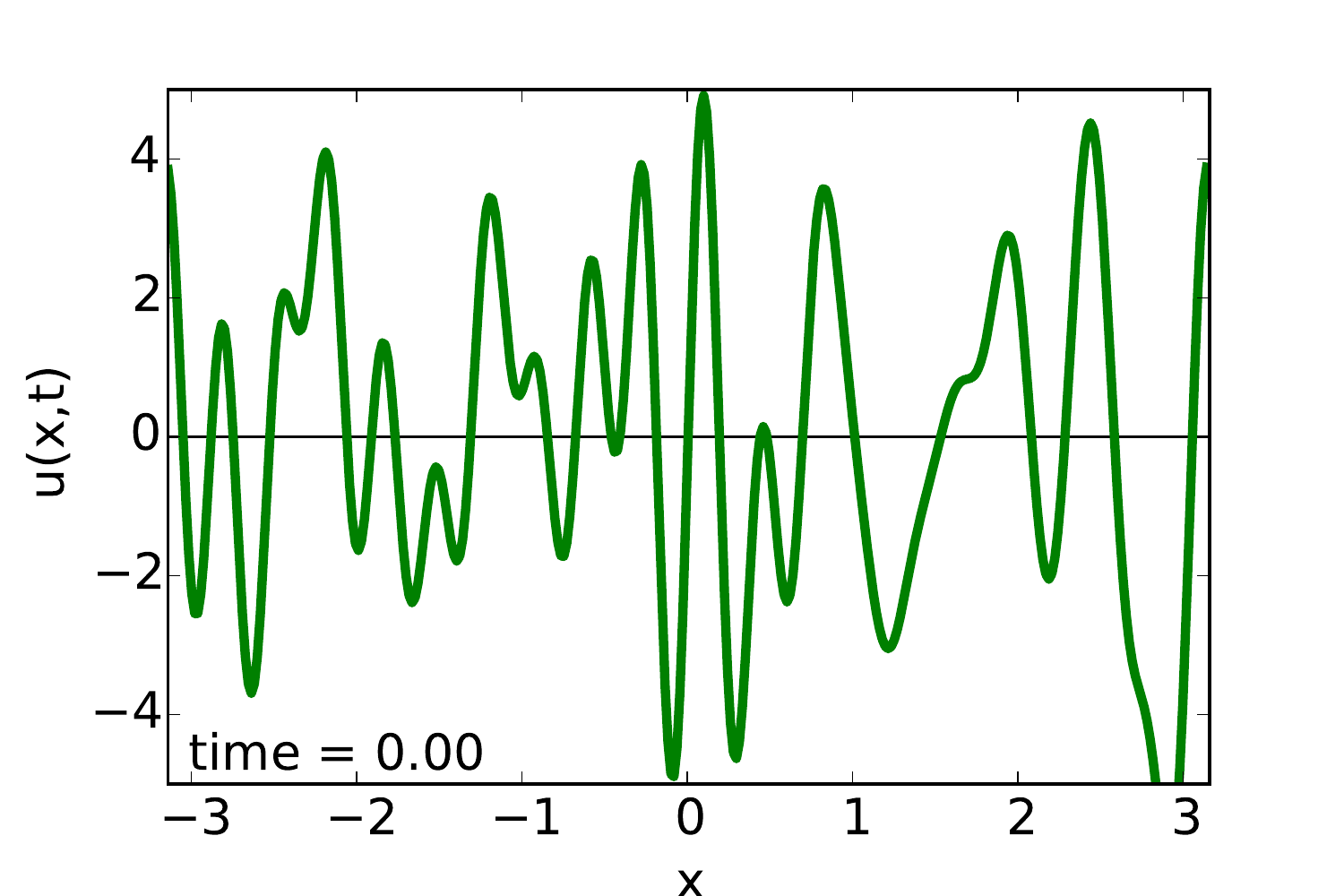}}
\hspace{0.01\textwidth}
\subfloat[\small $t=0.48$]{\includegraphics[width = 0.3\textwidth]{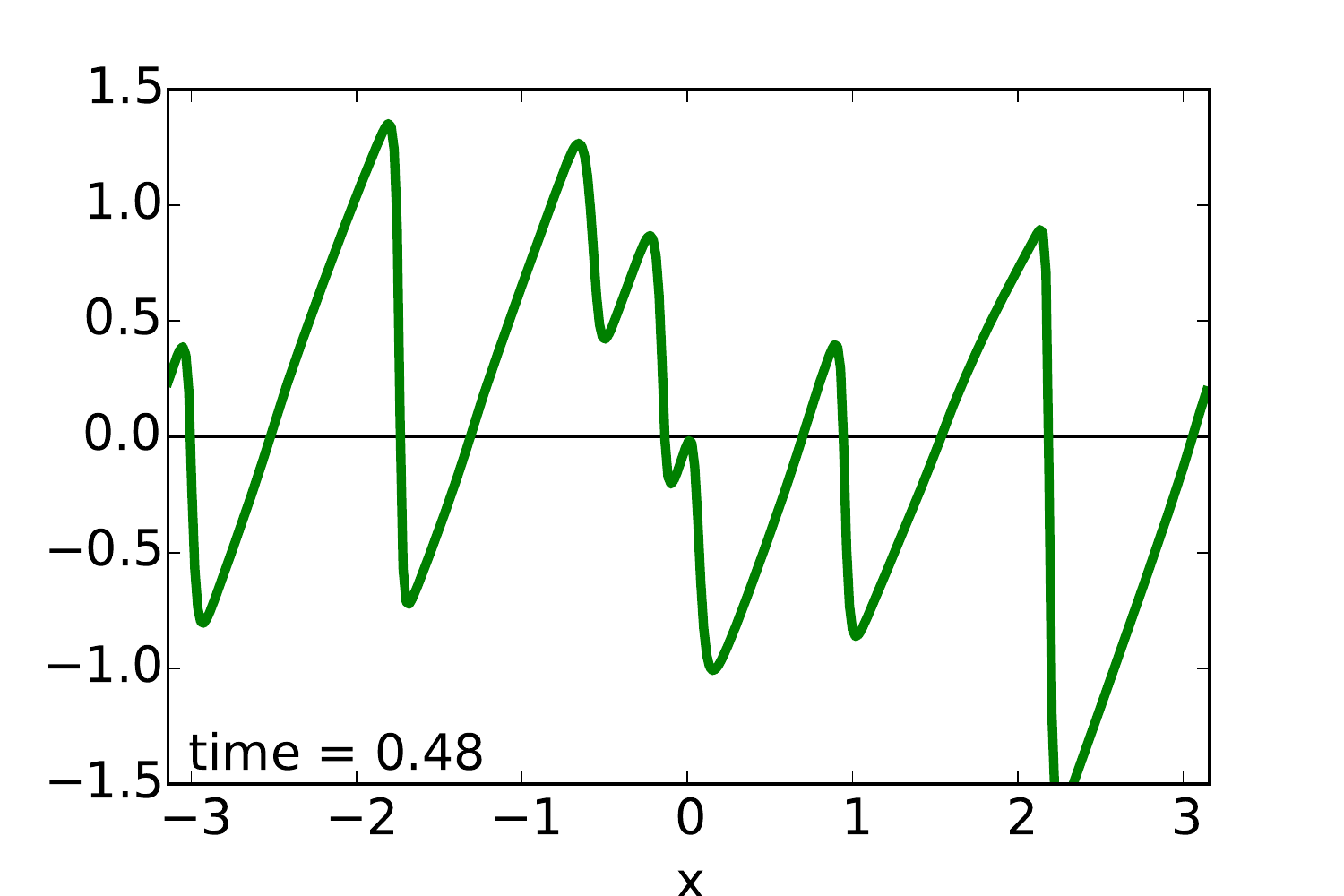}}
\hspace{0.01\textwidth}
\subfloat[\small $t=1.21$]{\includegraphics[width = 0.3\textwidth]{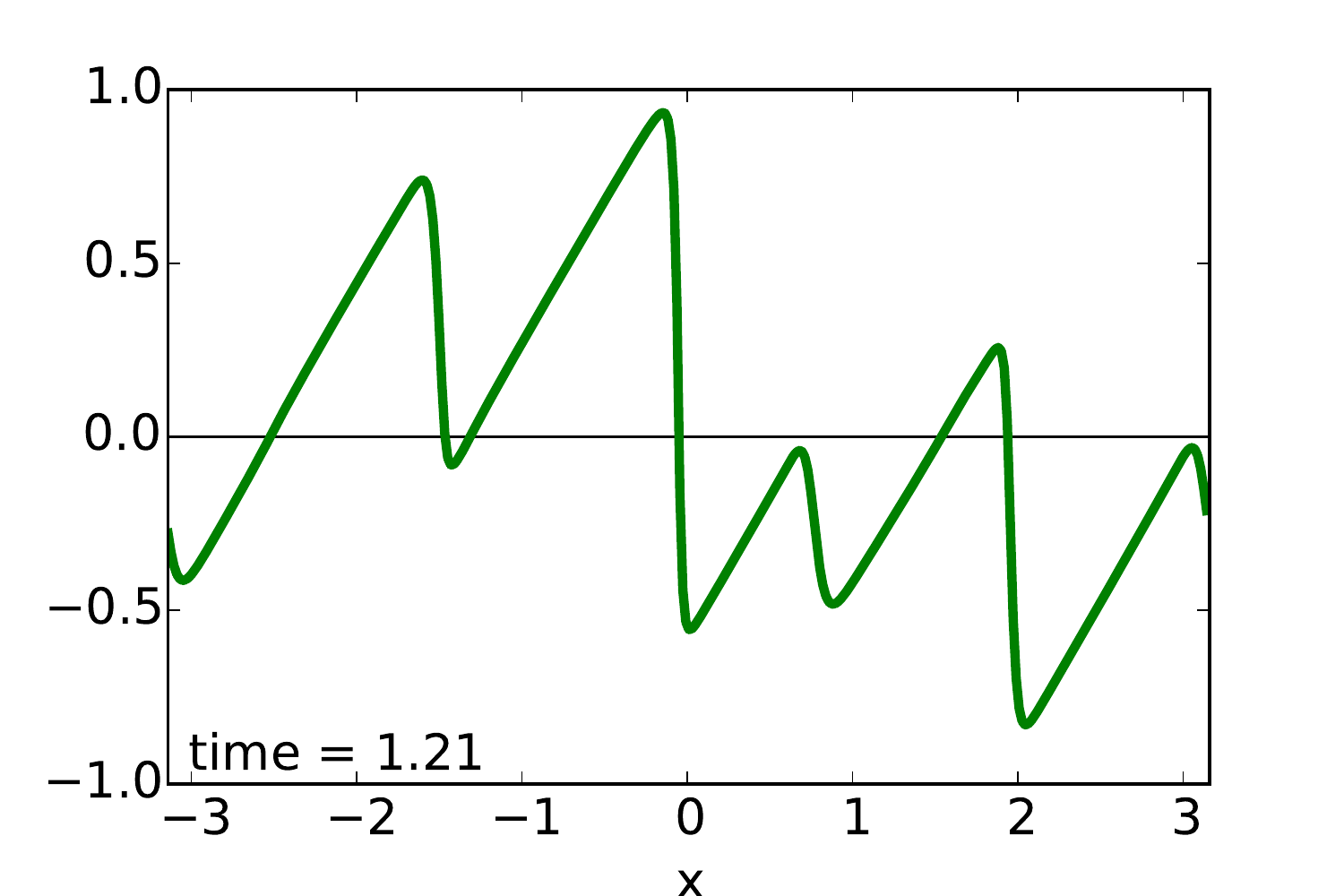}}\\
\subfloat[\small $t=2.42$]{\includegraphics[width = 0.3\textwidth]{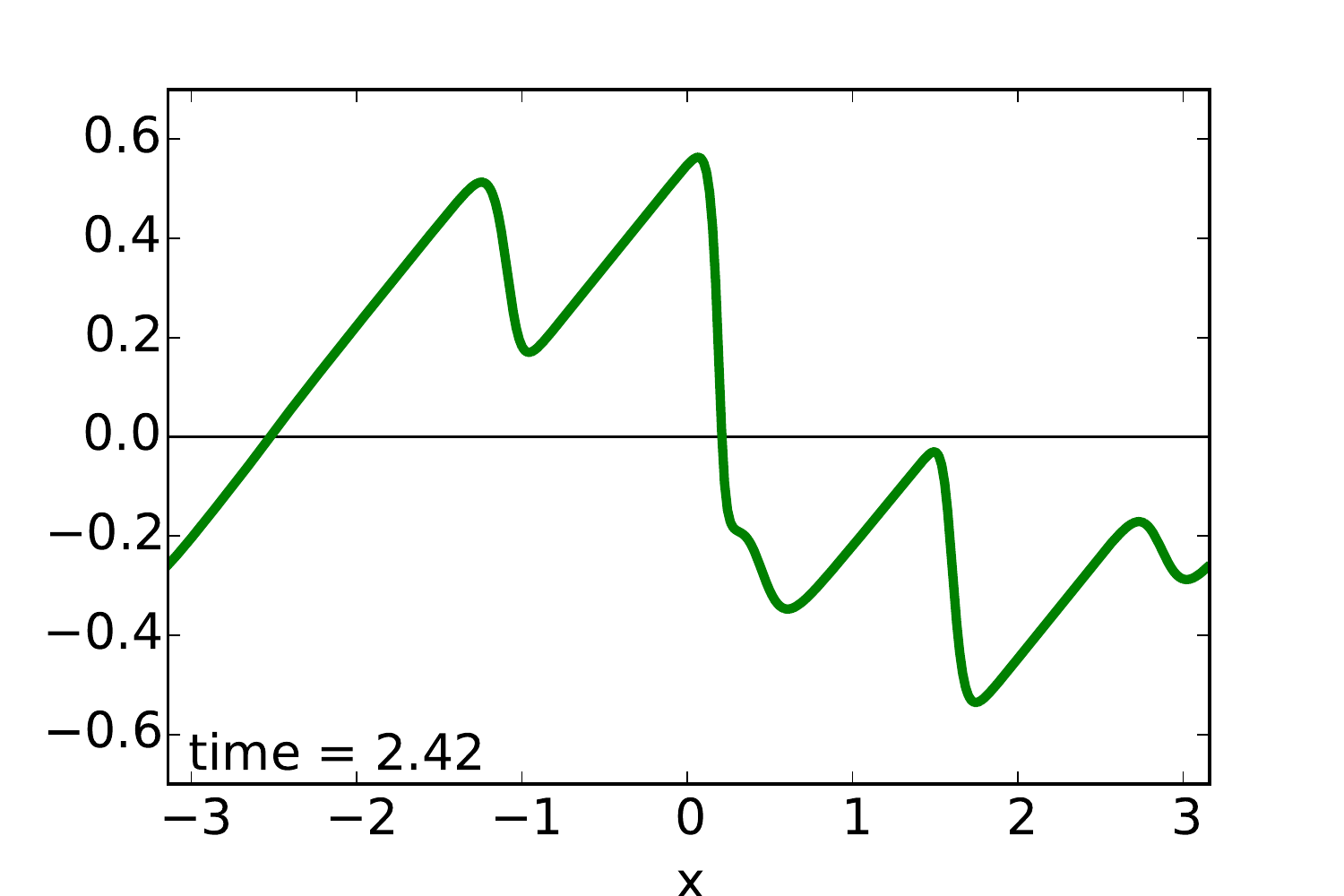}}
\hspace{0.01\textwidth}
\subfloat[\small $t=5.64$]{\includegraphics[width = 0.3\textwidth]{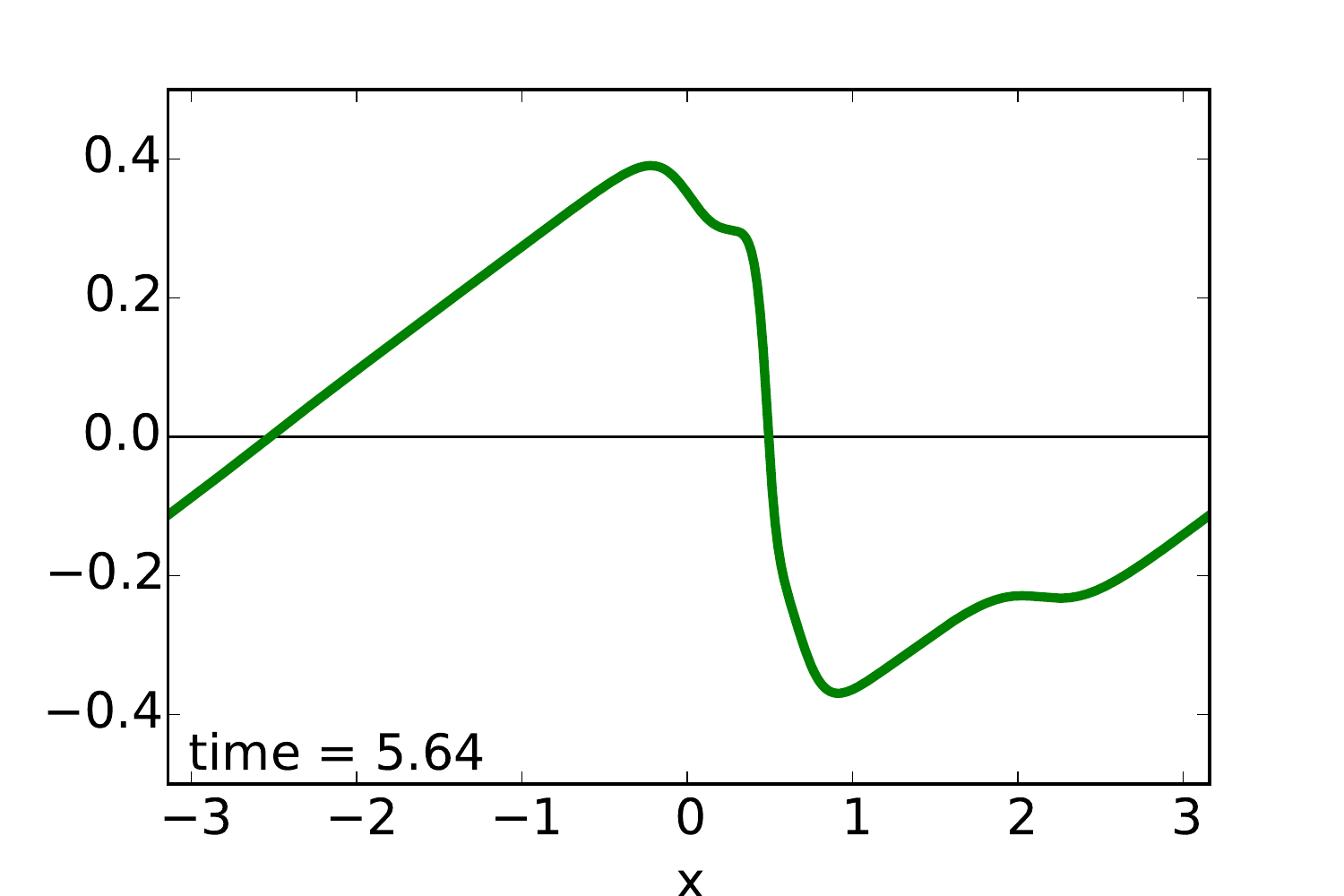}}
\hspace{0.01\textwidth}
\subfloat[\small $t=9.67$]{\includegraphics[width = 0.3\textwidth]{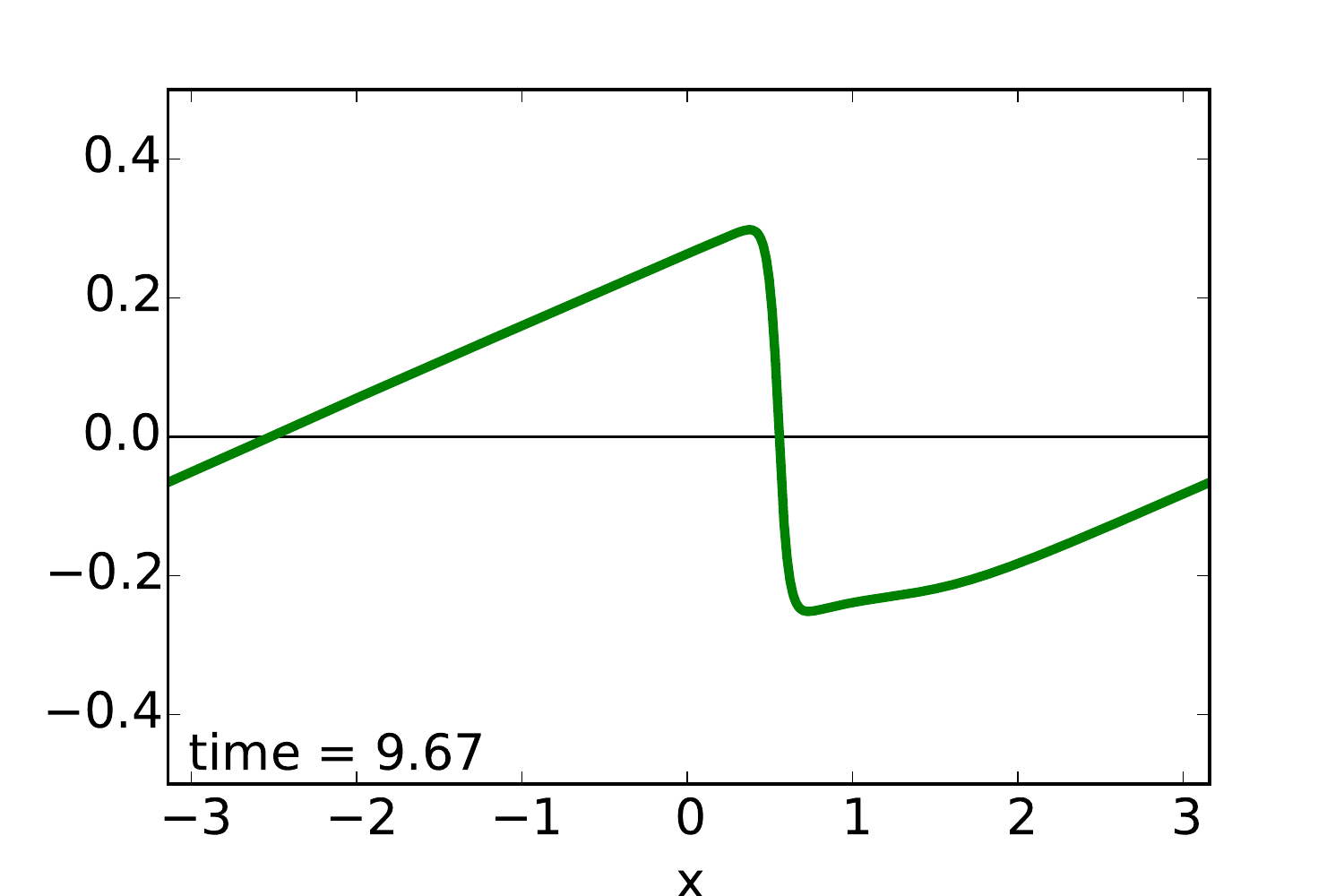}}\\
\subfloat[\small $t=24.17$]{\includegraphics[width = 0.3\textwidth]{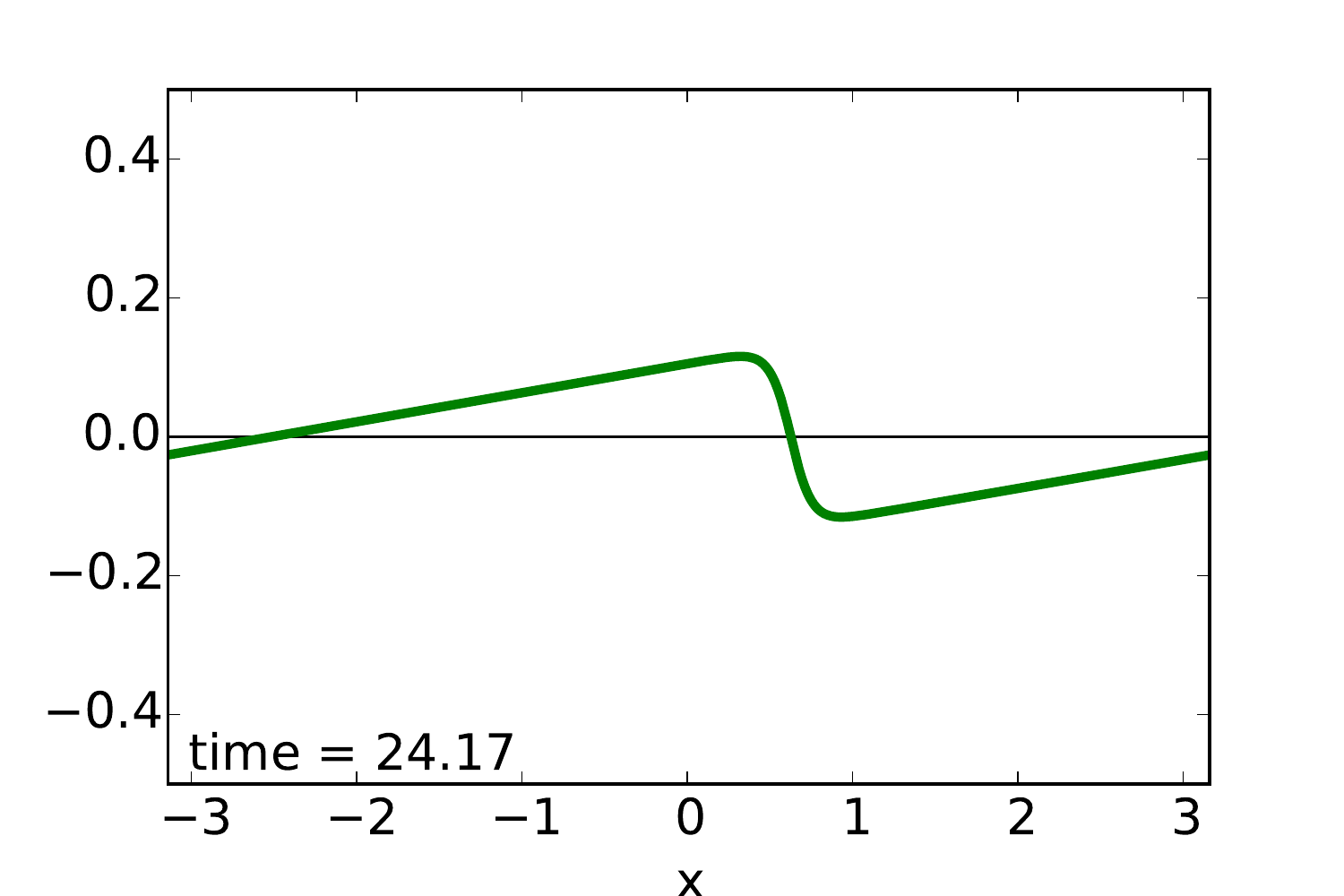}}
\subfloat[\small $t=56.40$]{\includegraphics[width = 0.3\textwidth]{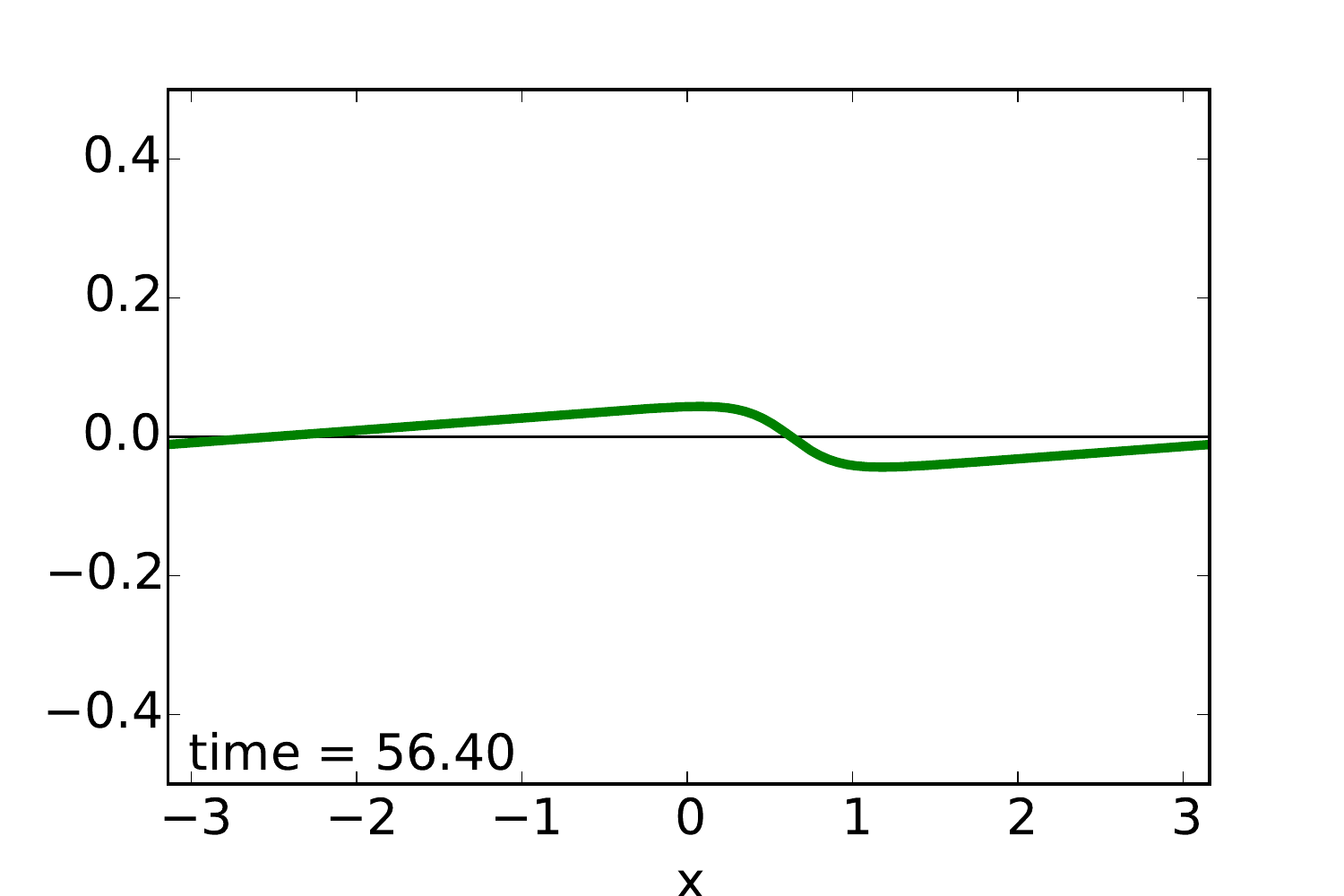}}
\hspace{0.01\textwidth}
\subfloat[\small $t=120.85$]{\includegraphics[width = 0.3\textwidth]{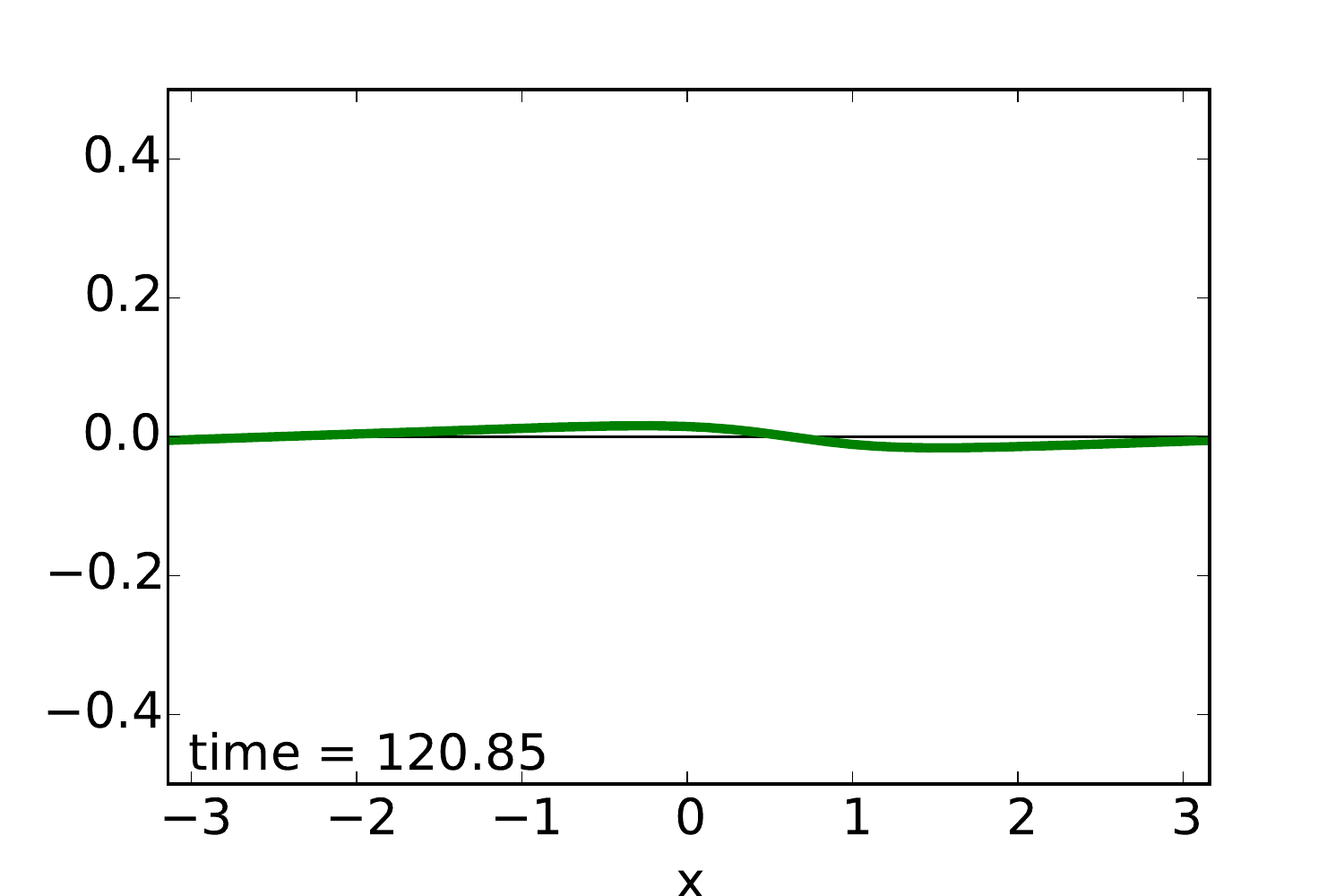}}
\caption{A numerically computed solution to (\ref{eq:Burgers}) with $\nu=0.008$ and random initial data. Solution computed in Python using Gudonov's method with $h=2\pi/350$, CFL constant $\lambda = 0.5$, $m=20$ modes for the random initial data, $\overline{u(x,t)}=a_0=0$, and $y_0:=\argmax_{x\in[-\pi,\pi)} \int^x u(y,0)\rmd y \approx -2.53$. We find that $u(x,t)$ rapidly approaches a solution $W_0(x,t;\nu, \Delta x)$ and then converges to $0$ in a manner consistent with the time evolution of $W_0(x-y_0-\pi,t;\nu)$. Our computations are consistent with the discussion in Sections~\ref{sec:Whit} and \ref{sec:CH}, which indicates that $\Delta x$ should be near $y_0+\pi=0.611$. The scale for (a-d) is not the same as for all other figures. Numerical experiments with different initial data evolved in a qualitatively similar fashion to that shown here.}
\label{fig:numerics}
\end{figure}

\subsection{Statement of the main results}\label{sec:outline}
Our main result concerns the spectrum of the linearization of the viscous Burgers equation about one of the solutions $W(x, t_0; \nu, x_0, c)$ at some time $t=t_0$ fixed. We show that the spectrum is such that solutions of (\ref{eq:Burgers}) which, at $t=t_0$ fixed, are near a member of the metastable family $W(x, t_0; \nu, x_0, c)$ can be expected to approach the family at a much 
faster rate than the solutions $W(x, t; \nu, \Delta x, c)$ themselves evolve in time. Although the linearized evolution is non-autonomous, and thus a rigorous verification of the expected approach rate does not follow directly from the spectral information we derive, we explain why we feel that such rates can nevertheless be expected in Section~\ref{sec:justify} below and in more detail in the discussion Section~\ref{sec:discussion}.

The linearization about $W(x, t; \nu, \Delta x, c)$ in the moving frame $x-\Delta x-ct\mapsto x$ takes the form
\begin{align}\label{eq:linearized}
v_t = \nu v_{xx} -(W_0(x,t; \nu)v)_x,
\end{align}
and the resulting eigenvalue problem is
\begin{align}\label{eq:eprob}
\mathcal L(\nu, t)\varphi =& \lambda\varphi,\quad
\mathcal L(\nu, t) \varphi:=\nu\varphi_{xx} -(W_0(x,t; \nu)\varphi)_x,
\end{align}
where $\mathcal L(\nu, t)$ is considered as an operator $\mathcal L(\nu, t): \mathrm H^2_{per}([-\pi,\pi))\to \mathrm L^2_{per}([-\pi,\pi))$ for every fixed $\nu$ and $t$. We use the standard inner product on $\mathrm L^2_{per}([-\pi,\pi))$
\[
\langle u, v\rangle : = \int_{-\pi}^{\pi} u(x)v(x)\rmd x
\]
and norm $\|u\|^2_{\mathrm L^2_{per}} = \langle u, u\rangle$. 
Motivated by the discussion of the solutions $W(x, t; \nu, \Delta x, c)$ and $u^{CH}(x, t; \nu, u_0, c)$ above we define the small parameter $\varepsilon^2:=2\nu t$. Then our main result is as follows. 
\begin{Theorem}\label{evalThm}
There exists $\varepsilon_0 > 0$ such that for all $\nu$, $t$ such that $0<\varepsilon\le \varepsilon_0$ with $\varepsilon = \sqrt{2\nu t}$, the spectrum for (\ref{eq:eprob}) consists entirely of ordered eigenvalues with $\lambda_0=0$ and the remaining eigenvalues contained on the negative real-axis. In particular,  
\begin{alignat}{3}\label{eq:evalues}
\lambda_1 =& -1/t+\calO\left(\varepsilon^{1/2}\rme^{-1/\varepsilon^2}\right),\quad&\lambda_2 =&-2/t+\calO\left(\varepsilon^{-2}\rme^{-1/\varepsilon^2}\right),\nonumber\\
\lambda_3 =&-3/t+\calO\left(\varepsilon^{-7/2}\rme^{-1/\varepsilon^2}\right),\quad&\lambda_4 =&-4/t+\calO\left(\varepsilon^{-6}\rme^{-1/\varepsilon^2}\right).
\end{alignat}
and $\lambda_n\le \lambda_4$ for all $j> 4$. 
\end{Theorem}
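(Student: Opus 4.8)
The plan is to exploit the two-scale structure of the operator $\mathcal L(\nu,t)$ revealed by Proposition~\ref{prop:estW}. On the "slow" scale $|x|=\calO(1)$ the coefficient $W_0$ is, up to exponentially small corrections, $\tfrac1t\big[x-\pi\tanh(\pi x/2\nu t)\big]$, which away from a shrinking neighborhood of the origin is essentially the linear function $\tfrac1t(x\mp\pi)$ on $\pm x>0$. On the "fast" scale $x=\varepsilon^2 y/\pi$ (so that $\pi x/2\nu t = y$) the $\tanh$ profile becomes an order-one transition layer and the eigenvalue equation becomes, to leading order, a $\nu$-independent ODE of Schr\"odinger type with a $\sech^2$ potential. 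First I would rescale and change variables to primitive form: writing $\varphi = \partial_x \Phi$ (legitimate since $\int_{-\pi}^\pi \varphi\,dx$ is forced to vanish for any eigenfunction with $\lambda\neq 0$, and the $\lambda_0=0$ eigenfunction is $W_0$ itself, whose odd symmetry and periodicity are easily checked) converts (\ref{eq:eprob}) into a second-order problem for $\Phi$ whose leading-order slow form is the Hermite-type operator $\tfrac{\varepsilon^2}{2}\Phi_{xx} - x\Phi_x$ after the natural rescaling, with eigenvalues $-n/t$ and eigenfunctions that are (rescaled) Hermite functions $H_n(x/\varepsilon)e^{-x^2/2\varepsilon^2}$. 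This identifies the candidate leading-order spectrum $\lambda_n=-n/t$.

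Next I would make this rigorous by constructing genuine eigenfunctions via matched asymptotics plus a Lin's-method/Melnikov gluing. On $\pm x \in [\delta,\pi]$ (the two "outer" slow regions) I would solve the constant-to-leading-order linear ODE exactly and expand in $\varepsilon$, obtaining a two-parameter family of decaying solutions on each side. In the inner region, on the fast scale, I would solve the $\sech^2$-potential ODE; its bounded solutions and the behavior of the unbounded ("growing") solution as $y\to\pm\infty$ give the matching data. The four nonzero eigenvalues $\lambda_1,\dots,\lambda_4$ correspond to the first four Hermite indices; the symmetry of $W_0$ (odd about $x=0$, hence $\mathcal L$ commutes with $x\mapsto -x$ after accounting for the sign) splits the problem into even/odd sectors, and I would treat each parity separately so the matching conditions at $x=0$ reduce to a single scalar jump condition (value or derivative of $\Phi$, whichever the parity kills automatically). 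Imposing that the outer decaying solutions, continued through the layer, match with the correct parity yields a solvability (Melnikov) condition $M(\lambda,\varepsilon)=0$; its leading term vanishes exactly when $\lambda=-n/t$, and the implicit function theorem then produces the true eigenvalue as a perturbation. The size of the correction—$\calO(\varepsilon^{1/2}e^{-1/\varepsilon^2})$ for $\lambda_1$, degrading to $\calO(\varepsilon^{-6}e^{-1/\varepsilon^2})$ for $\lambda_4$—comes from tracking how the exponentially small tails of the Hermite functions at $|x|=\pi$ (the terms $e^{-\pi^2/2\nu t}=e^{-1/\varepsilon^2}$, using $\varepsilon^2=2\nu t$ and the factor $\pi^2$) feed into the Melnikov integral, with the algebraic prefactor worsening by $\varepsilon^{-5/2}$ per Hermite index because higher Hermite functions have larger derivatives and larger boundary values on the slow scale.

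To finish, I would show there are no other eigenvalues with real part exceeding $\lambda_4 = -4/t$. For this I would use that the leading-order slow operator is, after a similarity transform (the Cole--Hopf-free analog of conjugating a Fokker--Planck operator to a Schr\"odinger operator), self-adjoint with purely discrete spectrum $\{-n/t\}_{n\ge0}$, so a standard perturbation/compactness argument—controlling $\mathcal L(\nu,t)$ minus its leading part as a relatively bounded perturbation with small relative bound on the relevant spectral subspace—confines the rest of the spectrum to $\{\Re\lambda \le -5/t + o(1/t)\}\subset\{\Re\lambda<\lambda_4\}$. One must also rule out spectrum off the real axis: here I would invoke that the similarity transform makes the principal part self-adjoint and that the exponentially small non-self-adjoint remainder cannot push eigenvalues off $\R$ by more than $e^{-1/\varepsilon^2}$, while consecutive leading eigenvalues are separated by $1/t \gg e^{-1/\varepsilon^2}$, so each perturbed eigenvalue stays real (a perturbed simple real eigenvalue of a real operator remains real).

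The main obstacle I expect is the gluing/Melnikov step for the higher indices $n=3,4$: the outer solutions there are not simply Hermite polynomials times a Gaussian but carry $\varepsilon$-corrections that interact with the layer, and keeping the error bookkeeping sharp enough to land the stated prefactors $\varepsilon^{-7/2}$ and $\varepsilon^{-6}$ (rather than a lossier bound) requires carefully choosing the overlap/matching point—balancing the slow error $\calO(e^{-1/\varepsilon^2})$ against the growth of the fast solution—and tracking constants through the Melnikov integral. The non-self-adjointness, while not affecting the leading spectrum, also complicates the a priori spectral localization in the last step, since one cannot simply quote the spectral theorem for $\mathcal L(\nu,t)$ itself; the similarity transformation to a self-adjoint leading part is the key device that makes that argument go through.
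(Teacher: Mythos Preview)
Your two-scale matching/gluing strategy for locating the first few eigenvalues is exactly what the paper does: a slow Hermite-type problem away from the layer, a fast $\sech^2$-potential problem through the layer, and a Melnikov-type matching condition whose implicit-function solution produces the exponentially small corrections. That part is fine and you have the right intuition for why the prefactors worsen with the Hermite index.

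However, there are two genuine issues. First, a factual error: the $\lambda_0=0$ eigenfunction is \emph{not} $W_0$. Integrating $\nu\varphi_{xx}-(W_0\varphi)_x=0$ once and solving the resulting first-order ODE gives $\varphi_0(x)=\exp\big[\tfrac1\nu\int^x W_0(s,t;\nu)\,ds\big]=C/(\psi^W)^2$, which is strictly positive; this is what the paper uses. Your ``primitive form'' $\varphi=\partial_x\Phi$ is an alternative route (it yields $\nu\Phi''-W_0\Phi'=\lambda\Phi+c$), but you then have to handle the integration constant and periodicity of $\Phi$, and you never exploit that this operator is itself exactly self-adjoint in $L^2(e^{-\int W_0/\nu}\,dx)$.

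Second, and more seriously, you miss the key structural observation that drives the qualitative part of the theorem (reality, ordering, and the bound $\lambda_n\le\lambda_4$). The transformation $\varphi=\mathcal T\widetilde\varphi$ with $\mathcal T(x)=\exp\big[\tfrac1{2\nu}\int^x W_0\big]$ conjugates the \emph{full} operator $\mathcal L$ (not just its leading slow part) to a genuinely self-adjoint Schr\"odinger operator $\widetilde{\mathcal L}=\nu\partial_x^2-\tfrac12\big[\partial_xW_0+\tfrac1{2\nu}W_0^2\big]$ on $L^2_{\rm per}$. This immediately gives real, ordered eigenvalues via periodic Sturm--Liouville theory, and since $\widetilde\varphi_0=1/\psi^W>0$ has no zeros, $\lambda_0=0$ is the top of the spectrum. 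The paper then identifies the four constructed eigenpairs as $\lambda_1,\dots,\lambda_4$ by \emph{counting zeros} of the matched eigenfunctions ($\widetilde\varphi_{1,2}$ have two zeros, $\widetilde\varphi_{3,4}$ have four), so Sturm--Liouville ordering forces every other eigenvalue below $\lambda_4$. Your proposed replacement---perturbing a self-adjoint \emph{leading-order} operator and arguing the ``exponentially small non-self-adjoint remainder'' cannot move eigenvalues much---is not a safe substitute here: the problem is singularly perturbed ($\nu\partial_x^2$ with $\nu\to0$ and a potential of size $1/\nu$), so relative-boundedness and standard eigenvalue perturbation bounds do not apply uniformly in $\varepsilon$, and there is no a priori reason an $O(e^{-1/\varepsilon^2})$ ``remainder'' yields only $O(e^{-1/\varepsilon^2})$ eigenvalue shifts for a non-normal operator. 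The exact self-adjoint conjugation is what closes this gap cleanly.
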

Denoting the eigenfunction associated with $\lambda_n$ by $\varphi_n(x-\Delta x-ct; t, \nu)$
we also show  

\begin{Theorem}\label{convergeThm}
Fix $\gamma_0\ll1$ and let $u(x,t;\nu)$ be a solution to (\ref{eq:Burgers}) with mean $\overline u(x,t;\nu) = c$ so that at some fixed time $t=t_0$ $u(x,t_0; \nu) = W(x, t_0; \nu, x_0, c)+v_0(x; t_0, x_0; \nu)$ with $\|v_0\|_{\mathrm H^2_{per}}=\gamma\le\gamma_0$. Then there exists $x_*$ and $t_*$ such that the projection of $v_*(x; t_*, x_*; \nu):=u(x ,t_0; \nu)- W(x, t_*; \nu, x_*, c)$ onto the space spanned by the first three eigenfunctions for (\ref{eq:eprob}) is zero:
\[
\langle v_*(x; t_*, x_*; \nu), \psi_n(x-x_*-ct_*; t_*, \nu) \rangle= 0 \qquad\text{for}\qquad n=0,1,2,
\]
where $\psi_n$ are the unique functions satisfying $\mathcal L^\dag\psi_n = \lambda_n\psi_n$ and $\mathcal L^\dag$ is the adjoint of $\mathcal L$.
\end{Theorem}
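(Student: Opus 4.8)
The plan is to prove Theorem~\ref{convergeThm} by a modulation / implicit-function-theorem argument: we regard $W(x,t_*;\nu,x_*,c)$ as a two-parameter family, the shift $x_*$ and the ``time slice'' $t_*$ being free (the mean $c$ is pinned by the data and $\nu$ is the fixed viscosity), and we solve the orthogonality conditions for $(x_*,t_*)$ near the given $(x_0,t_0)$. The $n=0$ condition is automatic: since $\mathcal L^\dag\psi=\nu\psi_{xx}+W_0\psi_x$ annihilates constants and $\lambda_0=0$ is simple, $\psi_0$ is a constant, so $\langle v_*,\psi_0(\cdot-x_*-ct_*;t_*,\nu)\rangle$ is proportional to $\int_{-\pi}^\pi v_*\,\rmd x=\int_{-\pi}^\pi\bigl(u(x,t_0;\nu)-c\bigr)\rmd x-\int_{-\pi}^\pi W_0(x-x_*-ct_*,t_*;\nu)\,\rmd x=2\pi c-2\pi c-0=0$, using that $u$ has mean $c$ and that $W_0(\cdot,t;\nu)$ is $2\pi$-periodic with mean zero. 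Thus it remains to solve $\Phi(x_*,t_*):=\bigl(\langle v_*,\psi_1\rangle,\langle v_*,\psi_2\rangle\bigr)=0$, where $\psi_n=\psi_n(\cdot-x_*-ct_*;t_*,\nu)$ and $v_*=v_*(\cdot;t_*,x_*;\nu)$; by hypothesis $\Phi(x_0,t_0)=\bigl(\langle v_0,\psi_1\rangle,\langle v_0,\psi_2\rangle\bigr)=\calO(\gamma)$.

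\textit{The Jacobian is diagonal to leading order.} Differentiating $v_*$ in the parameters gives $\partial_{x_*}v_*=\partial_xW_0$ and $\partial_{t_*}v_*=-\partial_tW_0+c\,\partial_xW_0$ (the latter because $t_*$ enters both the time and, through $-ct_*$, the space argument of $W_0$), while the terms in which the derivative falls on $\psi_n$ have the form $\langle v_*,\partial\psi_n\rangle=\calO(\gamma)$ at the base point. Hence $\det D\Phi(x_0,t_0)$ equals, up to $\calO(\gamma)$, the determinant of
\[
M:=\begin{pmatrix}\langle\partial_xW_0,\psi_1\rangle & -\langle\partial_tW_0,\psi_1\rangle\\ \langle\partial_xW_0,\psi_2\rangle & -\langle\partial_tW_0,\psi_2\rangle\end{pmatrix},
\]
since the $c\,\partial_xW_0$ part of the second column of $D\Phi$ is a multiple of the first and drops out of the determinant. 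A parity reduction now makes $M$ diagonal: $W_0(\cdot,t;\nu)$ is odd, so $\partial_xW_0$ is even and $\partial_tW_0$ is odd; and $\varphi\mapsto\varphi(-\cdot)$ (resp.\ $\psi\mapsto\psi(-\cdot)$) is a symmetry of $\mathcal L$ (resp.\ $\mathcal L^\dag$), so simplicity of the spectrum forces each $\varphi_n$, $\psi_n$ to have a definite parity. From the leading-order eigenfunctions produced in the proof of Theorem~\ref{evalThm} --- on the slow scale $\varphi_1$ is a constant corrected by a layer so that its mean vanishes, while $\varphi_2$ is an odd profile (to leading order a multiple of $W_0$) likewise corrected by a layer --- one reads off that $\psi_1$ is even and $\psi_2$ odd. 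Therefore $\langle\partial_xW_0,\psi_2\rangle=\langle\partial_tW_0,\psi_1\rangle=0$ and $\det D\Phi(x_0,t_0)=-\langle\partial_xW_0,\psi_1\rangle\,\langle\partial_tW_0,\psi_2\rangle+\calO(\gamma)$.

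\textit{Nondegeneracy of $M$ --- the main obstacle.} The crux is to show that $\langle\partial_xW_0,\psi_1\rangle$ and $\langle\partial_tW_0,\psi_2\rangle$ are bounded away from zero, with explicit dependence on $\varepsilon$. For this I would substitute the leading profiles of $\partial_xW_0$ and $\partial_tW_0$ from Proposition~\ref{prop:estW} ($\tfrac1t$ minus a $\sech^2$ layer, and $-\tfrac1tW_0$ plus an $x\,\sech^2$ layer, respectively) together with the leading-order forms of $\psi_1,\psi_2$ coming from the fast/slow gluing in the proof of Theorem~\ref{evalThm}, and evaluate the resulting integrals. The delicate point is that the ``outer'' part and the layer part of each integrand contribute at the same order and partially cancel, so the computation has to be carried to the order at which a nonzero remainder first appears; that remainder fixes the $\varepsilon$-scaling of $\det M$, and hence of $\|D\Phi(x_0,t_0)^{-1}\|$.

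\textit{Conclusion by a quantitative inverse function theorem.} With the nondegeneracy of $M$ in hand, a Newton iteration finishes the proof. The map $\Phi$ is $C^2$ in $(x_*,t_*)$ on a fixed neighbourhood of $(x_0,t_0)$, with $\Phi(x_0,t_0)=\calO(\gamma)$ and with $\|D\Phi^{-1}\|$ and $\|D^2\Phi\|$ controlled there, so provided $\gamma_0$ is small relative to these (in general $\varepsilon$-dependent) constants there is a zero $(x_*,t_*)$ of $\Phi$ within distance $\calO(\|D\Phi(x_0,t_0)^{-1}\|\,\gamma)$ of $(x_0,t_0)$. Since that displacement is small, $t_*$ stays in the admissible range and $v_*=v_0+\bigl(W(\cdot,t_0;\nu,x_0,c)-W(\cdot,t_*;\nu,x_*,c)\bigr)$ remains $\calO(\gamma)$ in $\mathrm H^2_{per}$, which justifies a posteriori the linearizations used above; this $(x_*,t_*)$ then satisfies $\langle v_*,\psi_n\rangle=0$ for $n=0,1,2$, as claimed.
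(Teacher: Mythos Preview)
Your approach is exactly the paper's: handle $n=0$ by the constancy of $\psi_0$ and mean conservation, then apply the implicit function theorem in $(x_*,t_*)$ to the two remaining orthogonality conditions, using the parities of $\partial_xW_0,\partial_tW_0,\psi_1,\psi_2$ to reduce the Jacobian determinant to $-\langle\partial_xW_0,\psi_1\rangle\,\langle\partial_tW_0,\psi_2\rangle$.

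Two places where you are vaguer than necessary and the paper is sharper:

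\emph{The ``cancellation'' worry is unfounded.} With the normalization of $\psi_1,\psi_2$ chosen in Proposition~\ref{prop:efunc} (the paper remarks explicitly that the scaling constants are picked to make the IFT argument simple), the slow-region contribution dominates cleanly and one gets
\[
\langle\partial_xW_0,\psi_1\rangle=-\frac{\sqrt\pi}{t_0}\bigl[1+\calO(\varepsilon^{3/2})\bigr],\qquad
\langle\partial_tW_0,\psi_2\rangle=\frac{\sqrt\pi}{2t_0^2}\bigl[1+\calO(\varepsilon)\bigr],
\]
so $\det M=\calO(1)$ uniformly in $\varepsilon$. There is no delicate balance between outer and layer pieces here; the layer contribution is exponentially subdominant once the $\psi_n$ are scaled as in Proposition~\ref{prop:efunc}.

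\emph{Uniformity of $\gamma_0$ in $\varepsilon$.} You allow $\gamma_0$ to depend on $\varepsilon$, but the paper shows it need not. The nontrivial point is that $\langle v_0,\psi_n\rangle=\calO(\|v_0\|)$ with a constant independent of $\varepsilon$, which does \emph{not} follow from Cauchy--Schwarz because $\|\psi_n\|_{L^2}$ is $\varepsilon$-dependent. For $\psi_1$ this is easy from the explicit profile. For $\psi_2$ the paper splits $v_0$ into even and odd parts, uses that only $v_{\mathrm{odd}}^0$ contributes, and then controls $\bigl\|v_{\mathrm{odd}}^0(x)/(x-\pi)\bigr\|_{L^\infty}$ by $\|v_0'\|_{L^\infty}\le C\|v_0\|_{H^2_{\mathrm{per}}}$; this is why the hypothesis is $H^2$ rather than $H^1$. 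With both the Jacobian and the residual uniformly bounded, the IFT yields $(x_*,t_*)$ with $|x_*-x_0|+|t_*-t_0|=\calO(\gamma)$ uniformly in $\varepsilon$.
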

\begin{figure}[h]
\centering
\includegraphics[width = 0.34\textwidth]{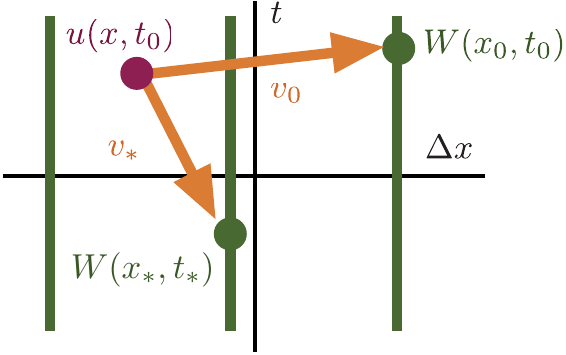}
\caption{$u(x,t;\nu)$ is a solution to (\ref{eq:Burgers}) which at a fixed time $t_0$ is known to be close to a solution $W(x, t_0; \nu, x_0, c)$. We show that by adjusting the parameters $(t_0, x_0)$ slightly we can also write $u(x,t_0;\nu) = W(x,t_*; \nu, x_*, c)+v_*(x; t_*, x_*; \nu)$ where the projection of $v_*$ onto the subspace spanned by the first three eigenfunctions for (\ref{eq:eprob}) is zero.}
\label{fig:vstar}
\end{figure}
See Figure~\ref{fig:vstar}. The inner product $\langle v, w\rangle$ is the standard periodic $\mathrm L^2$ inner product.
\begin{Remark}\label{rmk:smallData}
The discussion in Section~\ref{sec:CH} indicates that the condition $u(x,t_0; \nu) = W(x, t_0; \nu, x_0, c)+v_0(x; t_0, x_0, c; \nu)$ with $\|v_0\|_{\mathrm H^1_{per}}\ll 1$ holds for most initial data provided that $\nu, 1/t \ll1$. 
\end{Remark}
\begin{Remark}
Since (\ref{eq:Burgers}) preserves the mean, by choosing $c$ in $W(x, t_0; \nu, x_0, c)$ so that $\overline u(x,t;\nu) = c$, we ensure that $\overline v_0(x;t_0,x_0;\nu) = 0$ for all time. In the proof of Theorem~\ref{convergeThm} we will show that this implies
\[
\langle v_*(x;t_*,x_*;\nu), \psi_0(x-x_*-ct_*; t_*, \nu) \rangle= 0
\]
independently of $x_*$ and $t_*$. 
\end{Remark}
\subsection{Justification of $W$ as a family of metastable solutions}\label{sec:justify}
Finally, we discuss why the combination of Theorems~\ref{evalThm} and \ref{convergeThm} justifies our identification of the states $W(x,t;\nu, \Delta x,c)$ as a metastable family.
If we attempt to analyze the dynamics of solutions near the metastable family of solutions with the aid of the linearized equation (\ref{eq:linearized}), then the resulting linear equation is non-autonomous and, in general, knowledge about the spectrum of a non-autonomous linearized operator is not sufficient to conclude anything about the linearized evolution.  However, there are examples of parabolic non-autonomous partial differential equations with sufficiently well-behaved nonlinearities for which the ``freezing'' method allows one to estimate the decay rate of solutions in terms of the spectrum of the equations linearized about a solution at a fixed time \cite{Vinograd1983,Promislow2002}. While we have not proven that the freezing method applies to Burgers equations, we feel our results are a first step in rigorously verifying that the frozen time spectrum can serve as a mechanism for understanding the metastable behavior of the family $W(x,t;\nu, \Delta x,c)$ for time of order $\mathcal O(1)$. See the discussion in Section~\ref{sec:discussion} for more details on why we feel the frozen spectrum provides insight into the evolution in this case.

If we think of the spectral picture of the linearized equation (\ref{eq:linearized})
$$
\partial_t v = {\cal L}(\nu,t_0) v = \nu v_{xx} - (W_0(x,t_0;\nu) v)_x\ ,
$$
(where $W_0(x,t_0;\nu)$ is now evaluated at a fixed time $t_0$), then at first glance it looks as if the solutions don't tend toward the family at all, since due to the zero eigenvalue of ${\cal L}(\nu,t_0)$ the linear evolution is not contractive.  However, the point of Theorem~\ref{convergeThm} is that by choosing the parameters $x_*$ and $t_*$ of $W(x,t_*;\nu, x_*,c)$ appropriately, the projection of a solution near $W_0(x,t_0;\nu)$ onto the subspace spanned by the corresponding eigenfunctions $\varphi_n(x-x_*-ct_*; t_*, \nu)$ for $n=0,1,2$ is zero.  Thus, we expect that the linear evolution will 
result in the perturbation decaying toward $W(x,t_*;\nu, x_*,c)$ with a rate governed by third non-zero eigenvalue, which according to Theorem~\ref{evalThm} satisfies
$$
\lambda_3 \approx -\frac{3}{t_0}.
$$
See Figure~\ref{fig:localAttractive}.
Thus, if we write $t = t_0+\tau$ with $t_0 \gg 1$ fixed large enough that $\|v_0\|$ is small as discussed in Remark~\ref{rmk:smallData} and $\tau/t_0 \ll 1$), and then define $p(\tau)$ so that the solution to (\ref{eq:Burgers}) is $u(t_0+\tau) = W(x,t_*;\nu, x_*, c) + p(\tau)$,
 then the size of the perturbation $p(\tau)$ will decay like
$$
\| p(\tau) \|_{\mathrm L^2} \sim \rme^{-\frac{3}{t_0} \tau}\ .
$$
Since
$$
\frac{1}{(t_0 + \tau)^3} = \frac{1}{(t_0)^3 (1+ \tau/t_0)^3} = \frac{\rme^{-3 \ln(1+ \frac{\tau}{t_0})}}{(t_0)^3}
= \frac{\rme^{-\frac{3}{t_0} \tau+\calO\left(\tau^2/t_0^2 \right)}}{(t_0)^3},
$$
for $\tau/t_0$ small enough we have
$$
\| p(\tau ) \|_{\mathrm L^2} \sim \frac{1}{t^3}\ .
$$
Since the evolution along the family behaves like $1/t$, as can be seen from equation (\ref{est:W}) 
\[
W_0(x, t; \nu) = \frac 1t \left[x-\pi\tanh\left(\frac{\pi x}{2\nu t} \right)+\calO\left(\rme^{-1/\nu t }\right)\right],
\]
solutions approach the family at a rate that is much faster than the evolution along the family justifying our characterization of these states as metastable. 
\begin{figure}[h]
\centering
\includegraphics[width = 0.4\textwidth]{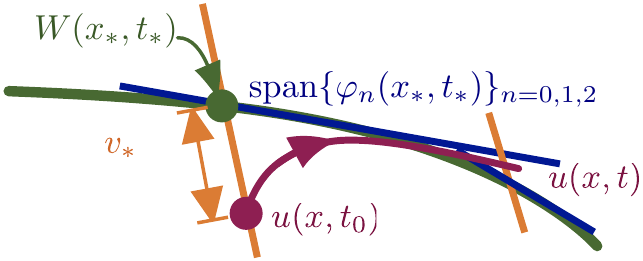}
\caption{A schematic representation for why Theorems~\ref{evalThm} and \ref{convergeThm} indicate that $W(x,t;\nu,\Delta x, c)$ is a metastable family for Burgers equation (\ref{eq:Burgers}). In particular, choosing the initial condition to have projection zero onto the span of $\{\varphi_0,\varphi_1,\varphi_2\}$, the evolution of the semi-flow generated by $\mathcal L(\nu,t_0)$ will contract toward this subspace with a rate $\rme^{-3\tau/t_0}$. For a discussion of why we believe this reflects the decay of the actual linearized evolution, see the discussion in Section~\ref{sec:discussion}.}
\label{fig:localAttractive}
\end{figure}

\section{Eigenvalue problem}\label{sec:evalue}
In this section we prove Theorems~\ref{evalThm} and \ref{convergeThm}. In order to aid understanding of our arguments we have summarized our notation in Tables~\ref{tab:notation}-\ref{tab:notationFast} in Appendix~\ref{app:notation}. Without loss of generality we let $c=0$ and $\Delta x = 0$ (otherwise make the substitution $y= x-\Delta x-ct$). If we consider the eigenvalue equation for the linear operator (\ref{eq:eprob}) with 
$\lambda= 0$ we have
\[
\partial_x^2 \varphi_0 - \frac1\nu (W_0(x,t;\nu) \varphi_0)_x = 0.
\]
Integrating this equation twice we find 
\begin{align}\label{eq:e0}
\varphi_0(x; t, \nu):=\exp\left[\frac1\nu\int^x W_0(s, t; \nu)\rmd s\right]=\frac{C}{\left[\psi^W(x, t; \nu)\right]^2}
\end{align}
is an exact eigenfunction for (\ref{eq:eprob}) with $\lambda=0$, where the function $\psi^W(x,t; \nu)$ was defined in (\ref{eq:psi}). 
To find the rest of the spectrum we define the transformation
\begin{align}\label{eq:tildeTrans}
\varphi(x;t,\nu) = \mathcal T(x;t,\nu)\widetilde\varphi(x; t,\nu)\quad\text{where}\quad \mathcal T(x;t,\nu):=\exp\left[\frac1{2\nu}\int^x W_0(s,t;\nu)\rmd s\right]=\frac {\widetilde C}{\psi^W(x,t;\nu)}
\end{align}
Without loss of generality we choose $\widetilde C=1$. A straightforward computation shows that $\lambda$ is an eigenvalue for (\ref{eq:eprob}) with associated eigenvector $\varphi(x; t, \nu)$ if, and only if, $\lambda$ is an eigenvalue for the self-adjoint problem (\ref{eq:etildeprob}) 
\begin{align}\label{eq:etildeprob}
\widetilde{\mathcal L}(\nu, t)\widetilde\varphi =& \lambda\widetilde\varphi,\quad
\widetilde{\mathcal L}(\nu, t) \widetilde\varphi:=\nu\widetilde\varphi_{xx} -\frac12\left[\partial_xW_0(x, t; \nu)+\frac1{2\nu}W_0^2(x, t; \nu)\right]\widetilde\varphi
\end{align}
with associated eigenfunction $\tilde\varphi$ given by (\ref{eq:tildeTrans}), where we again consider $\widetilde{\mathcal L}(\nu, t)$ as an operator
\[
\widetilde{\mathcal L}(\nu, t): \mathrm H^2_{per}([-\pi,\pi))\to \mathrm L^2_{per}([-\pi,\pi))
\]
for every fixed $\nu$ and $t$. In particular, since the transformation $\varphi\mapsto\widetilde\varphi$ is bounded with bounded inverse, the spectra of $\mathcal L$ and $\widetilde{\mathcal L}$ are identical. Owing to Sturm-Liouville theory for periodic self-adjoint scalar eigenvalue problems (c.f. \cite[Thm 2.1, 2.14]{MangusWinkler}), the eigenvalues for (\ref{eq:etildeprob}) are ordered $\lambda_0 > \lambda_1\ge\lambda_2 > \lambda_3 \ge\lambda_4 > \ldots$. Furthermore, the eigenfunctions $\widetilde\varphi_{2n-1}$ and $\widetilde\varphi_{2n}$ have exactly $2n$ zeros in $x\in[-\pi, \pi)$; since the transformation (\ref{eq:tildeTrans}) is strictly positive, the eigenfunctions $\varphi_{2n-1}$ and $\varphi_{2n}$ for (\ref{eq:eprob}) have exactly $2n$ zeros in $x\in[-\pi, \pi)$ as well. From (\ref{eq:e0}) we see that $\varphi_0(x; t, \nu) > 0$ has no zeros in $x\in[-\pi, \pi)$ since $W$ is continuous; hence, all other eigenvalues $\lambda_n$ are contained on the negative real axis. The following Proposition completes the proof of Theorem~\ref{evalThm}.
\begin{Proposition}\label{evalProp}
Let $\varepsilon:=\sqrt{2\nu t}$. There exists $0<\varepsilon_0\ll 1$ such that for all $\varepsilon\le\varepsilon_0$ the next four eigenvalues for (\ref{eq:etildeprob}) after $\lambda_0 = 0$ are 
\begin{alignat}{3}\label{eq:etildeValues}
\lambda_1 =& -1/t+\calO\left(\varepsilon^{1/2}\rme^{-1/\varepsilon^2}\right),\quad&\lambda_2 =&-2/t+\calO\left(\varepsilon^{-2}\rme^{-1/\varepsilon^2}\right),\nonumber\\
\lambda_3 =&-3/t+\calO\left(\varepsilon^{-7/2}\rme^{-1/\varepsilon^2}\right),\quad&\lambda_4 =&-4/t+\calO\left(\varepsilon^{-6}\rme^{-1/\varepsilon^2}\right).
\end{alignat}
Furthermore, defining $I_s(\varepsilon):=[\varepsilon^{3/2}, 2\pi-\varepsilon^{3/2}]$, $I_f(\varepsilon):= [-\varepsilon^{3/2}, \varepsilon^{3/2}]$, there exists $0<C(\varepsilon_0)<\infty$ such that the following estimates of the first two associated eigenfunctions hold for all $\varepsilon\le\varepsilon_0$
\begin{subequations}\label{eq:tildephi}
\begin{align}
\widetilde\varphi_1:&\left\{ 
\begin{array}{ccc}
\sup_x\left|\rme^{(x-\pi)^2/2\varepsilon^2}\widetilde\varphi_1(x; t,\nu) +1\right| \le C(\varepsilon_0) \varepsilon^{3/2}&: &x\in I_s(\varepsilon) \\
\sup_x\left|\frac{\varepsilon^2}{2\pi^2}\rme^{\pi^2/2\varepsilon^2}\sech\left(\frac{\pi x}{\varepsilon^2}  \right)\widetilde\varphi_1(x; t,\nu)-\left[\sech^2\left(\frac{\pi x}{\varepsilon^2} \right)\left(1+\frac{x^2}{2\varepsilon^2}+\frac{\varepsilon^2}{2\pi^2} \right)-\frac{\varepsilon^2}{2\pi^2}\right] \right|\le C(\varepsilon_0)\varepsilon^{5/2} &: &x\in I_f(\varepsilon)
\end{array}
\right\}\\
\widetilde\varphi_2 : & \left\{ \begin{array}{ccc}
\sup_x\left|\frac{\varepsilon}{x-\pi} \rme^{(x-\pi)^2/2\varepsilon^2}\widetilde\varphi_2(x; t,\nu) +1\right|\le C(\varepsilon_0) \varepsilon & : &x\in I_s(\varepsilon)\\
\sup_x\left|\frac\varepsilon{2\pi} \rme^{\pi^2/2\varepsilon^2}\widetilde\varphi_2(x; t,\nu)-\left[\sinh\left(\frac{\pi x}{\varepsilon^2}\right)+\frac{\pi x}{\varepsilon^2}\sech\left(\frac{\pi x}{\varepsilon^2}\right)\right] \right|\le C(\varepsilon_0)\varepsilon& : &x \in I_f(\varepsilon)
\end{array}\right\}
\end{align}
\end{subequations}
\end{Proposition}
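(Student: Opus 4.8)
\textbf{Proof plan for Proposition~\ref{evalProp}.}
The main text has already recast the problem in self-adjoint form (\ref{eq:etildeprob}), fixed the ordering of the eigenvalues, and exhibited the exact pair $\lambda_0=0$, $\varphi_0=C/[\psi^W]^2$ (equivalently $\widetilde\varphi_0=\varphi_0/\mathcal T=C\mathcal T$); what remains is the location of $\lambda_1,\dots,\lambda_4$, the absence of other eigenvalues in $(\lambda_4,0]$, and the eigenfunction estimates. I would obtain all of these by matched asymptotics. First, Proposition~\ref{prop:estW} lets me replace $W_0$ and $\partial_xW_0$ in (\ref{eq:etildeprob}) by their explicit tanh/sech profiles, so that $\widetilde{\mathcal L}(\nu,t)=\nu\partial_x^2-V$ with
\[
V(x)=\frac1{2t}\left[\,1-\frac{\pi^2}{\varepsilon^2}\sech^2\!\Bigl(\frac{\pi x}{\varepsilon^2}\Bigr)+\frac1{\varepsilon^2}\Bigl(x-\pi\tanh\frac{\pi x}{\varepsilon^2}\Bigr)^{2}\right],\qquad \varepsilon^2=2\nu t,
\]
up to an $\mathrm L^\infty$-perturbation of size $\calO(\varepsilon^{-2}t^{-1}\rme^{-1/\nu t})$; since $\rme^{-1/\nu t}=\rme^{-2/\varepsilon^2}$ is negligible against the claimed corrections and the eigenvalues of a self-adjoint Schr\"odinger operator are $1$-Lipschitz in the $\mathrm L^\infty$ norm of the potential, it suffices to analyse the explicit $V$. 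Setting $\mu=t\lambda$ puts (\ref{eq:etildeprob}) in the singularly perturbed form $\tfrac{\varepsilon^2}{2}\widetilde\varphi''-tV(x)\widetilde\varphi=\mu\widetilde\varphi$, and I would identify the two regions: a \emph{slow} parabolic well at the smooth zero $x=\pi$ of $W_0$, where in the variable $s=(x-\pi)/\varepsilon$ the equation is, to leading order, the Weber (parabolic-cylinder) equation $\widetilde\varphi''-(s^2+2\mu+1)\widetilde\varphi=0$, with corrections that are exponentially small throughout the slow region because they originate in the tails of $\tanh$ and $\sech$; and a \emph{fast} layer at $x=0$, where in $\zeta=\pi x/\varepsilon^2$ the leading equation is the reflectionless P\"oschl--Teller equation $\widetilde\varphi_{\zeta\zeta}+(2\sech^2\zeta-1)\widetilde\varphi=\calO(\varepsilon^2)\,\widetilde\varphi$. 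To leading order the slow well contributes eigenvalues $\mu=-(n+1)$, $n\ge0$, with Hermite--Gaussian eigenfunctions $H_n(s)\rme^{-s^2/2}$, and the $\sech^2$-dip, a P\"oschl--Teller well with exactly one bound state ($\ell=1$), contributes that state at $\mu\approx0$ with eigenfunction $\sech\zeta$; the latter must be $\widetilde\varphi_0$, since near $x=0$ the two nearest terms of (\ref{eq:psi}) give $\psi^W\propto\cosh(\pi x/\varepsilon^2)$, hence $\mathcal T=1/\psi^W\propto\sech\zeta$.

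Next I would make the two local constructions rigorous. On an interval slightly larger than $I_s(\varepsilon)$, reaching into the overlap zone $|x|\sim\varepsilon^{3/2}$, I would solve the Weber-type equation by variation of parameters / a contraction argument around $H_n(s)\rme^{-s^2/2}$, producing the solution $R_\pm(\cdot;\mu,\varepsilon)$ that is recessive as $x\to0^\pm$ within the slow region and — the crucial point — continuing $R_+$ to the opposite edge and reading off the amplitude $b_n(\mu,\varepsilon)$ of its dominant component there: $b_n$ vanishes to leading order exactly at $\mu=-(n+1)$, and its behaviour near that value is controlled by a Wronskian/Melnikov-type quantity built from the Weber solutions and the exponentially small tail of the potential. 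On $I_f(\varepsilon)$ I would solve the P\"oschl--Teller equation to one further order: the bounded solution is $\sech\zeta$ plus $\calO(\varepsilon^2)$ corrections and the second solution grows like $\tfrac12(\sinh\zeta+\zeta\sech\zeta)$; carrying the expansion one step — and restoring the $x/t$ term in $W_0$ that was dropped at leading order — reproduces precisely the bracketed expressions in (\ref{eq:tildephi}) (note $\cosh\zeta\,\sech^2\zeta=\sech\zeta$ and $x^2/2\varepsilon^2=\zeta^2\varepsilon^2/2\pi^2$, so the $\zeta$-dependent terms are the first $\varepsilon^2$-correction to $\sech\zeta$, and the constant $-\varepsilon^2/2\pi^2$ multiplies the growing $\cosh\zeta$ piece).

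The eigenvalues and eigenfunctions then come from gluing, in the spirit of Lin's method \cite{Lin90} and singularly perturbed eigenvalue problems \cite{Doelman98}. Requiring the slow and fast solutions and their first derivatives to match in the two overlap zones near $x=0^\pm$ defines an Evans-function-like quantity $E(\mu,\varepsilon)$ whose zeros are exactly the eigenvalues; $E=E_0+(\text{terms of size }\rme^{-c/\varepsilon^2})$ with $E_0$ having simple zeros at $\mu=-(n+1)$, so by the implicit function theorem (or Rouch\'e) for each $n=0,1,2,3$ there is a unique eigenvalue $\mu_{n+1}=-(n+1)+\delta_n$. The shift $\delta_n$ is the Melnikov/Wronskian quantity divided by $E_0'(-(n+1))$; its prefactor is the product of the Hermite--Gaussian tail (which carries a power of $\varepsilon$ depending on $n$ through $H_n$) with $\sech$-type factors evaluated at the $\varepsilon^{3/2}$-scale overlap point — exactly the mechanism that produces the index-dependent orders $\calO(\varepsilon^{1/2}\rme^{-1/\varepsilon^2})$, $\calO(\varepsilon^{-2}\rme^{-1/\varepsilon^2})$, $\calO(\varepsilon^{-7/2}\rme^{-1/\varepsilon^2})$, $\calO(\varepsilon^{-6}\rme^{-1/\varepsilon^2})$ for $\lambda_1,\dots,\lambda_4$. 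That these are all the eigenvalues in $(\lambda_4,0]$ follows because the inner problem has exactly one bound state (at $\mu\approx0$) and the outer parabolic problem has no eigenvalues other than the $\mu=-(n+1)$ up to the corrections just found; combined with the Sturm--Liouville ordering this gives $\lambda_0=0$, $\lambda_n=-n/t+(\text{stated error})$ for $n=1,\dots,4$, and $\lambda_n<\lambda_4$ for $n>4$. (A softer route to the leading order and the counting — enough for Theorem~\ref{evalThm} but not for the exponential corrections — is Dirichlet--Neumann bracketing, cutting the circle into the layer and its complement, together with Hermite and $\sech$ trial functions in the min--max principle.) Finally, normalizing the constructed $\widetilde\varphi_1,\widetilde\varphi_2$ as in (\ref{eq:tildephi}) and reading off their slow and fast pieces yields the eigenfunction estimates.

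The principal obstacle is the rigorous gluing at exponential precision: controlling the Weber solution on the \emph{finite} slow interval accurately enough to isolate the leading exponentially small amplitude $b_n$, performing the Melnikov/Wronskian computation, and — not least — pinning down the correct $\varepsilon$-powers in the prefactors, which come from the mismatch between the decay of the Hermite--Gaussian tail and that of the $\sech$ tail at the $\varepsilon^{3/2}$-scale overlap point (this is precisely why the four errors carry different powers of $\varepsilon$). A minor but necessary side check is that the Proposition~\ref{prop:estW} reduction does not erode this precision — i.e.\ that after the $1/\nu$ amplification the $\rme^{-1/\nu t}$ errors there remain negligible against the $\rme^{-1/\varepsilon^2}$ corrections being computed.
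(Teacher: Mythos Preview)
Your proposal is correct and follows essentially the same route as the paper: the same slow/fast decomposition (Weber--Hermite near $x=\pi$, P\"oschl--Teller near $x=0$), the same variation-of-parameters construction in each region, and the same Melnikov/Wronskian gluing at the $\varepsilon^{3/2}$ overlap, with the eigenvalue shifts extracted via the implicit function theorem. The only procedural difference is that the paper keeps the full $W_0$ throughout and bounds the discrepancy from the tanh/sech profile as part of the nonlinearity in each region (their $\widehat{\mathcal N}$, $\widecheck{\mathcal N}$), rather than first invoking Proposition~\ref{prop:estW} to replace the potential globally; and the paper matches the ratio $\psi/\varphi$ directly at $x=\varepsilon^{3/2}$ rather than packaging the matching as an Evans function, but these are equivalent formulations.
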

See Figure~\ref{fig:tildePhi} for a representation of $I_s(\varepsilon)$ and $I_f(\varepsilon)$. These intervals $I_{s,f}$ arise naturally from the fact that $\widetilde{\mathcal L}$ is a singularly perturbed operator and we will discuss them in more detail in Section~\ref{sec:overview}. 
In Section~\ref{sec:overview} we provide intuition for Proposition~\ref{evalProp} through a formal matched asymptotic argument. We compute the eigenfunctions $\varphi_n(x; t, \nu)$ associated with each $\lambda_n$ and show that $\varphi_{1,2}(x; t, \nu)$ have two zeros in $x\in[-\pi, \pi)$ and $\varphi_{3,4}(x; t, \nu)$ have four zeros in $x\in[-\pi, \pi)$. For the interested reader we make these arguments rigorous in Section~\ref{sec:rigorous}.

Estimates (\ref{eq:tildephi}) can then be transformed into estimates on the adjoint eigenfunctions for (\ref{eq:eprob}) via (\ref{eq:tildeTrans}) as follows. 
Let $\mathcal L^\dag$ represent the adjoint of $\mathcal L$ and $\psi_n$ its eigenvector associated with $\lambda_n$ so that $\mathcal L^\dag\psi_n = \lambda_n\psi_n$. Using the fact that $\varphi_n=\mathcal T\widetilde\varphi_n$ as described in equation (\ref{eq:tildeTrans}),
$
\widetilde{\mathcal L} = \mathcal T^{-1}\mathcal L\mathcal T
$
and that the operators $\widetilde{\mathcal L}$, $\mathcal T$, and $\mathcal T^{-1}$ are all self-adjoint we find that
$
 \mathcal T\mathcal L^\dag\mathcal T^{-1}\widetilde\varphi_n = \lambda_n\widetilde\varphi_n,
$ or, in other words, $\psi_n = \mathcal T^{-1}\widetilde\varphi_n$. We remark that since $\mathcal T(x;t,\nu)$ is even, $\psi_n(x;t,\nu)$ has the same parity as $\widetilde\varphi_n(x;t,\nu)$. In particular, we will show that $\psi_n$ and $\widetilde\varphi_n$ are even for $n=0,1$ and odd for $n=2$. 

Using the same types of computations as were used to derive (\ref{est:W}) we can derive analogous estimates on the transformation function $\mathcal T(x;t,\nu)=(\psi^W)^{-1}(x,t;\nu)$, namely
\begin{align*}
\mathcal T^{-1}:&\left\{ 
\begin{array}{ccc}
\sup_x\left|\rme^{(x-\pi)^2/2\varepsilon^2}  \mathcal T^{-1}(x;\nu,t) - 1\right|\le C(\varepsilon_0) \rme^{-1/\sqrt\varepsilon} &:& x\in I_s(\varepsilon)\\
\sup_x\left|\frac12\rme^{x^2/2\varepsilon^2}\rme^{\pi^2/2\varepsilon^2}\sech\left(\frac{\pi x}{\varepsilon^2}\right)\mathcal T^{-1}(x;\nu,t)-1 \right| \le C(\varepsilon_0)\rme^{-1/\varepsilon^2}&:&x\in I_f(\varepsilon)
\end{array}
\right\}.
\end{align*}
Thus, the following Proposition is an immediate corollary to Proposition~\ref{evalProp} and the fact that 
\[
\widetilde\varphi_0(x; t, \nu) = \frac1{\psi^W(x, t; \nu)}=\mathcal T(x;\nu,t).
\]
\begin{Proposition}\label{prop:efunc}
Let $\varepsilon:=\sqrt{2\nu t}$. There exists $0<\varepsilon_0\ll 1$ and $0<C(\varepsilon_0)<\infty$ such that for all $\varepsilon\le\varepsilon_0$ the first three eigenfunctions for (\ref{eq:eprob}) are
\begin{subequations}\label{eq:phi}
\begin{align}
\psi_0(&x;t,\nu)= \frac1{\sqrt{2\pi}} \\
\psi_1:&\left\{ 
\begin{array}{ccc}
\sup_x\left|\varepsilon\rme^{(x-\pi)^2/\varepsilon^2}\psi_1(x; t,\nu) +1\right| \le C(\varepsilon_0) \varepsilon^{3/2}&: &x\in I_s(\varepsilon) \\
\sup_x\left|\frac{\varepsilon^3}{4\pi^2}\rme^{\pi^2/\varepsilon^2}\rme^{x^2/2\varepsilon^2}\psi_1(x; t,\nu)-1\right|\le C(\varepsilon_0)\varepsilon &: &x\in I_f(\varepsilon)
\end{array}
\right\}\\
\psi_2 : & \left\{ \begin{array}{ccc}
\sup_x\left|\frac{\varepsilon^3}{x-\pi} \rme^{(x-\pi)^2/\varepsilon^2}\psi_2(x; t,\nu) +1\right|\le C(\varepsilon_0) \varepsilon & : &x\in I_s(\varepsilon)\\
\sup_x\left|\frac{\varepsilon^3}{4\pi}\rme^{\pi^2/\varepsilon^2}\rme^{x^2/2\varepsilon^2}\sech\left(\frac{\pi x}{\varepsilon^2} \right)\psi_2(x; t,\nu)-\left[\sinh\left(\frac{\pi x}{\varepsilon^2}\right)+\frac{\pi x}{\varepsilon^2}\sech\left(\frac{\pi x}{\varepsilon^2}\right)\right] \right|\le C(\varepsilon_0)\varepsilon& : &x \in I_f(\varepsilon)
\end{array}\right\}
\end{align}
\end{subequations}
\end{Proposition}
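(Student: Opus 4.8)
The plan is to obtain Proposition~\ref{prop:efunc} as a corollary of Proposition~\ref{evalProp} through the conjugation recorded just above its statement. Because $\widetilde{\mathcal L} = \mathcal T^{-1}\mathcal L\mathcal T$ with $\widetilde{\mathcal L}$, $\mathcal T$ and $\mathcal T^{-1}$ all self-adjoint, the eigenfunctions of $\mathcal L^\dag$ are, up to scalar normalization, $\psi_n = \mathcal T^{-1}\widetilde\varphi_n$, and $\psi_n$ inherits the parity of $\widetilde\varphi_n$ since $\mathcal T$ is even, so $\psi_0,\psi_1$ are even and $\psi_2$ is odd. The case $n=0$ is immediate: by (\ref{eq:e0})--(\ref{eq:tildeTrans}) one has $\widetilde\varphi_0 = \mathcal T$ identically, hence $\mathcal T^{-1}\widetilde\varphi_0 \equiv 1$, and rescaling to unit $\mathrm L^2_{per}([-\pi,\pi))$ norm gives $\psi_0 = 1/\sqrt{2\pi}$. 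For $n=1,2$ the statement reduces to multiplying, separately on $I_s(\varepsilon)$ and on $I_f(\varepsilon)$, the estimates for $\widetilde\varphi_n$ in (\ref{eq:tildephi}) by the estimates for the conjugating factor $\mathcal T^{-1}=\psi^W$ recorded just before Proposition~\ref{prop:efunc} --- those in turn being obtained exactly as in the proof of Proposition~\ref{prop:estW}, by factoring out of the Gaussian sum the term centred nearest the evaluation point and dominating the remainder by a geometric series (near the endpoints of $I_s$ the two Gaussians centred at $\pm\pi$ are within a factor $\rme^{-1/\sqrt\varepsilon}$ of one another, producing the $\rme^{-1/\sqrt\varepsilon}$ error there, whereas on $I_f$ the nearest competing Gaussian is down by $\rme^{-1/\varepsilon^2}$).

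The multiplication itself is pure bookkeeping. On a fixed region, if $f\,g_1 = P_1(x) + \calO(\delta_1)$ and $h\,g_2 = P_2(x) + \calO(\delta_2)$, where $f,h$ are the positive exponential/hyperbolic prefactors appearing in (\ref{eq:tildephi}) and in the $\mathcal T^{-1}$ estimate (each bounded above and below uniformly on that region) and $P_1,P_2$ are the leading profiles, then $(fh)(g_1g_2) = P_1P_2 + \calO\!\left(\delta_1\sup|P_2| + \delta_2\sup|P_1| + \delta_1\delta_2\right)$ there. Since the $\mathcal T^{-1}$ errors $\delta_1$ are exponentially small whereas the $\widetilde\varphi_n$ errors $\delta_2$ are only algebraically small --- orders $\varepsilon^{3/2},\varepsilon,\varepsilon^{5/2}$ --- the product inherits, up to a constant, the $\widetilde\varphi_n$ error, which is exactly what is displayed in (\ref{eq:phi}). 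The prefactors compose as claimed: on $I_s$ the factor $\rme^{(x-\pi)^2/2\varepsilon^2}$ from each of the two estimates multiplies to the $\rme^{(x-\pi)^2/\varepsilon^2}$ of (\ref{eq:phi}); on $I_f$ the weight $\tfrac12\rme^{x^2/2\varepsilon^2}\rme^{\pi^2/2\varepsilon^2}\sech(\pi x/\varepsilon^2)$ from $\mathcal T^{-1}$ combines with $\tfrac{\varepsilon^2}{2\pi^2}\rme^{\pi^2/2\varepsilon^2}\sech(\pi x/\varepsilon^2)$ for $\psi_1$ and with $\tfrac{\varepsilon}{2\pi}\rme^{\pi^2/2\varepsilon^2}$ for $\psi_2$, and collapsing the $\cosh\cdot\sech = 1$ products recovers the prefactors and profiles of (\ref{eq:phi}); this same computation pins down the scalar relating the two normalizations, namely $1/\varepsilon$ for $\psi_1$ and $1/\varepsilon^2$ for $\psi_2$.

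The genuinely hard analysis --- the singularly perturbed construction, Lin's method and the Melnikov correction --- all lives in Proposition~\ref{evalProp}, so here the only step requiring real care is the fast-scale estimate for $\psi_1$: neither the profile of $\widetilde\varphi_1$ nor the target profile in (\ref{eq:phi}) is a monomial, so after multiplying out the $\sech$-, $\cosh$- and Gaussian-dependent pieces one must cancel what cancels and then verify that the surviving hyperbolic corrections stay within the advertised $\calO(\varepsilon)$ uniformly on $I_f = [-\varepsilon^{3/2},\varepsilon^{3/2}]$. Because $\pi x/\varepsilon^2$ is large on the bulk of $I_f$, I expect this to be the one fiddly estimate of the argument, carried out by splitting $I_f$ according to whether $\pi x/\varepsilon^2$ is $\calO(1)$ or large --- on the former the hyperbolic functions are handled by direct expansion, on the latter by crude bounds, using that $\varepsilon^{3/2}$ sits safely between the fast scale $\varepsilon^2$ and the $\calO(1)$ scale. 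All the remaining cases --- the slow-scale estimates for $\psi_1$ and $\psi_2$, and the fast-scale estimate for $\psi_2$ --- drop straight out of the multiplication with no such subtlety.
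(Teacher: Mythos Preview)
Your approach is correct and matches the paper's: the paper states Proposition~\ref{prop:efunc} as an immediate corollary of Proposition~\ref{evalProp} via the identity $\psi_n=\mathcal T^{-1}\widetilde\varphi_n$ together with the estimates on $\mathcal T^{-1}=\psi^W$ recorded just before the proposition, which is exactly the multiplication you carry out. You in fact supply more detail than the paper --- the $n=0$ case, the normalization scalars, and the identification of the fast-scale $\psi_1$ estimate as the one delicate step --- since the paper offers no argument beyond calling it ``immediate''.
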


We remark that in going from Proposition~\ref{evalProp} to Proposition~\ref{prop:efunc} we have introduced a scaling constant which make the Implicit Function Theorem argument in the proof below as simple as possible.
We recall that the eigenfunctions in Proposition~\ref{prop:efunc} are given in the moving frame $x-\Delta x-ct\mapsto x$; thus to get eigenfunctions for the linearization about $W(x, t_0; \nu, x_0, c)$ in a stationary frame we replace $x$ in Proposition~\ref{prop:efunc} with $x-\Delta x-ct$. 

Using Proposition~\ref{prop:efunc} we prove Theorem~\ref{convergeThm}.

\begin{proof}{\bf (of Theorem~\ref{convergeThm})}
We first consider the inner product with $\psi_0(x;t,\nu)$. Since (\ref{eq:Burgers}) preserves the mean of solutions and the mean of $W(x,t;\nu,\Delta x,c)=c$ it is true that the mean $\overline v_0=0$ for all time. Next, using the fact that $v_*$ is given by 
\[
v_*(x; t_*, x_*; \nu) := W(x, t_0; \nu, x_0, c)+v_0(x; t_0, x_0; \nu)- W(x, t_*; \nu, x_*),
\]
we find
\begin{align*}
\langle v_*(x; t_*, x_*; \nu), \psi_0(x-x_*-ct_*; t_*, \nu)\rangle =& \frac1{\sqrt{2\pi}}\int_{-\pi}^\pi v_*(x; t_*, x_*; \nu) \rmd x\\
=&\sqrt{2\pi}\overline v_0 = 0.
\end{align*}

It remains to consider the inner products with $\psi_1$ and $\psi_2$. Let $\Omega\subset\mathrm H_{\mathrm{per}}^2$, $I_1\subset \R$, $I_2\subset\R$ such that $0\in\Omega$, $x_0\in I_1$, and $t_0\in I_2$. We apply the Implicit Function Theorem to $\mathcal F: \Omega\times I_1\times I_2\to\R^2$
\begin{align*}
\mathcal F(v_0; x_*, t_*; \nu, c):=&\pmat{
\langle v_*(x; t_*, x_*; \nu), \psi_1(x-x_*-ct_*; t_*, \nu) \rangle\\
\langle v_*(x; t_*, x_*; \nu), \psi_2(x-x_*-ct_*; t_*, \nu) \rangle
}\\
=&\pmat{
\langle W_0(x-x_0-ct_0, t_0; \nu)-W_0(x-x_*-ct_*, t_*; \nu), \psi_1(x-x_*-ct_*; t_*, \nu) \rangle\\
\langle W_0(x-x_0-ct_0, t_0; \nu)-W_0(x-x_*-ct_*, t_*; \nu), \psi_2(x-x_*-ct_*; t_*, \nu) \rangle
}\\&
+\pmat{
\langle v_0(x, t_0; x_0; \nu), \psi_1(x-x_*-ct_*; t_*, \nu) \rangle\\
\langle v_0(x, t_0; x_0; \nu), \psi_2(x-x_*-ct_*; t_*, \nu) \rangle
}
\end{align*}
and show that $\mathcal F(v_0; x_*,t_*;\nu)=0$ near $(v_0; x_*, t_*) = (0; x_0, t_0)$ for every $\varepsilon:=\sqrt{2\nu t_0}$ small enough. We will show that $\mathcal F$ is uniformly bounded in $\varepsilon$, so that the subspaces $\Omega$, $I_1$, and $I_2$ can be chosen independent of $\varepsilon$. 

Due to Cauchy-Schwartz 
\[
\langle v_0(x; t_0, x_0; \nu), \psi_n(x-x_*-ct_*; t_*, \nu) \rangle \le \|v_0\|_{\mathrm L^2_{per}}\|\psi_n\|\le\|v_0\|_{\mathrm H^1_{per}}.
\]
Thus, $\mathcal F(v_0; x_0, t_0; \nu, c)=0$ for $v_0\equiv0$. In order to show that the matrix
\[
\pmat{|&|\vspace{-0.8em}\\
\frac{\rmd\mathcal F}{\rmd x_*}& \frac{\rmd\mathcal F}{\rmd t_*} \vspace{-0.8em}\\
|&|}\Bigg|_{(x_*, t_*; v_0) = (x_0, t_0; 0)}
\]
is invertible we use the facts that
\begin{align*}
&\frac\rmd{\rmd x_*} \langle W_0(x-x_0-ct_0, t_0; \nu)-W_0(x-x_*-ct_*, t_*; \nu), \psi_n(x-x_*-ct_*; t_*, \nu) \rangle\big|_{(x_*, t_*)=(x_0, t_0)} \\
&\qquad= \langle \left[\partial_xW_0\right](x-x_0-ct_0, t_0; \nu), \psi_n(x-x_0-ct_0; t_0, \nu) \rangle\\
&\frac\rmd{\rmd t_*} \langle W_0(x-x_0-ct_0, t_0; \nu)-W_0(x-x_*-ct_*, t_*; \nu), \psi_n(x-x_*-ct_*; t_*, \nu) \rangle\big|_{(x_*, t_*)=(x_0, t_0)} \\
&\qquad= c\langle \left[\partial_xW_0\right](x-x_0-ct_0, t_0; \nu), \psi_n(x-x_0-ct_0; t_0, \nu) \rangle\\
&\qquad\quad-\langle \left[\partial_tW_0\right](x-x_0-ct_0, t_0; \nu), \psi_n(x-x_0-ct_0; t_0, \nu) \rangle.
\end{align*}
Since $\partial_xW_0(x, t; \nu)$ and $\psi_1(x; t,\nu)$ are even functions and $\partial_tW_0(x, t;\nu)$ and $\psi_2(x; t, \nu)$ are odd functions centered about $x=n\pi$, $n\in \Z$ we have that
\begin{align*}
0&=\langle \left[\partial_tW\right](y, t_0; \nu), \psi_1(y; t_0, \nu) \rangle\\
&=\langle \left[\partial_xW\right](y, t_0; \nu), \psi_2(y; t_0, \nu) \rangle \\
&=\langle 1, \psi_2(y; t_0, \nu) \rangle
\end{align*}
where $y:=x-x_0-ct_0$. In fact, 
\[
0 = \langle 1, \psi_n(y; t_0, \nu) \rangle\qquad\forall j\neq 0
\]
since, integrating the eigenfunction equation (\ref{eq:eprob}) from $y=-\pi$ to $\pi$ and using periodicity we get
\[
0 = \lambda_n\int_{-\pi}^\pi \psi_n(y; t_0, \nu)\rmd y,
\]
where $\lambda_n = 0$ only for $n=0$. Finally,  using
the asymptotic expansions for the derivatives of $W_0(x, t; \nu)$, equations (\ref{est:W}),
\begin{align}\label{est:W2}
\partial_xW_0(x,t; \nu) = & \frac1t\left[1-\frac{\pi^2}{2\nu t}\sech^2\left(\frac{\pi x}{2\nu t} \right)+\calO\left(\frac1t\rme^{-1/\nu t }\right)\right]\nonumber\\
\partial_tW_0(x,t; \nu) = &  \frac1{t^2}\left[-x+\pi\tanh\left(\frac{\pi x}{2\nu t} \right)+\frac{\pi^2 x}{2\nu t}\sech^2\left(\frac{\pi x}{2\nu t} \right)+\calO\left(\frac1t\rme^{-1/\nu t }\right)\right]
\end{align}
we get that
\begin{align}\label{eq:innerprod}
\langle \left[\partial_xW_0\right](y, t_0; \nu), \psi_1(y; t_0, \nu) \rangle =& -\frac{\sqrt\pi}{t_0}\left[1+\calO\left(\varepsilon^{3/2}\right) \right]\quad\text{and}\nonumber\\
\langle \left[\partial_tW_0\right](y, t_0; \nu), \psi_2(y; t_0, \nu) \rangle =& \frac {\sqrt\pi}{2t_0^2} \left[1+\calO\left(\varepsilon\right)\right]
\end{align}
where $\varepsilon=\sqrt{2\nu t_0}$.
We claim that the same scaling holds for the inner products with $v_0$ so that $\mathcal F$ is indeed uniformly bounded for all small $\varepsilon$, which we show at the end of this proof. 

Additionally, using the fact that $v\in \mathrm H^1_{\mathrm{per}}$ and integrating by parts we have 
\begin{align*}
\frac\rmd{\rmd x_*} \langle v_0(x, t_0; x_0; \nu), \psi_n(x-x_*-ct_*; t_*, \nu) \rangle =& -\langle v_0(x, t_0; x_0; \nu), \partial_x\psi_n(x-x_*-ct_*; t_*, \nu) \rangle \\
=& \langle \partial_xv_0(x, t_0; x_0; \nu), \psi_n(x-x_*-ct_*; t_*, \nu) \rangle \\
\le& \|\partial_x v_0\|_{\mathrm L^2_{per}} \le \|v_0\|_{\mathrm H^1_{per}}
\end{align*}
and similarly for the $t_*$ derivative. Thus 
\begin{align*}
&\pmat{|&|\vspace{-0.8em}\\
\frac{\rmd\mathcal F}{\rmd x_*}& \frac{\rmd\mathcal F}{\rmd t_*} \vspace{-0.8em}\\
|&|}\Bigg|_{(x_*, t_*; v_0) = (x_0, t_0; 0)} 
\\
&=\pmat{
\langle \left[\partial_xW_0\right](y), \psi_1(y) \rangle&\langle c\left[\partial_xW_0\right](y)- \left[\partial_tW_0\right](y), \psi_1(y) \rangle\\
\langle \left[\partial_xW_0\right](y), \psi_2(y) \rangle&\langle c\left[\partial_xW_0\right](y)- \left[\partial_tW_0\right](y), \psi_2(y) \rangle}\\
&= \pmat{
 -\frac{\sqrt\pi}{t_0}\left[1+\calO\left(\varepsilon^{3/2}\right) \right]& -\frac{c\sqrt\pi}{t_0}\left[1+\calO\left(\varepsilon^{3/2}\right) \right]\\
0&-\frac {\sqrt\pi}{2t_0^2}\left[1+\calO\left(\varepsilon\right)\right]}\\
&=: A(\varepsilon)
\end{align*}
which is invertible since $\det(A(\varepsilon))=\frac\pi{2t_0^3}\left[1+\calO\left(\varepsilon\right) \right]$ which, for all $\varepsilon$ sufficiently small, is not equal to zero. We observe, in particular, that $\det(A(\varepsilon)) = \calO(1)$, which implies that the difference $\|v_*-v_0\|$ is small for all $\varepsilon\ll1$.

It remains to show that there exists a $C<\infty$ such that $\left|\langle v_0, \psi_1(y; t, \nu) \rangle\right|\le C$ and $\left|\langle v_0, \psi_2(y; t, \nu) \rangle\right|=C$. The first estimate follows from the fact that
\[
\left|\int_{-\pi}^\pi vw\rmd x \right|\le \|v\|_{\mathrm L^\infty} \left|\int_{-\pi}^\pi w\rmd x \right|
\]
and the expansion for $\psi_1$ in Proposition~\ref{prop:efunc}. For the second estimate, we first decompose $v_0=v_{\mathrm{even}}^0+v_{\mathrm{odd}}^0$ into its even an odd components. We note that this is possible since $v_0$ is periodic; in fact
\begin{align*}
v_{\mathrm{even}}^0(x) = \frac12\left(v_0(x)+v_0(2\pi-x) \right) \quad\text{and}\quad v_{\mathrm{odd}}^0(x)=\frac12\left(v_0(x)-v_0(2\pi-x)\right).
\end{align*}
Then
\begin{align*}
\langle v_0, \psi_2(y; t, \nu) \rangle = \langle v_{\mathrm{even}}^0, \psi_2(y; t, \nu) \rangle+\langle v_{\mathrm{odd}}^0, \psi_2(y; t, \nu) \rangle = \langle v_{\mathrm{odd}}^0, \psi_2(y; t, \nu) \rangle.
\end{align*}
Using the expansion for $\psi_2$ given in Proposition~\ref{prop:efunc}, which in particular shows it is exponentially localized near $x=\pi+2n\pi$, we find that there exists a $C<\infty$ such that
\begin{align*}
|\langle v_{\mathrm{odd}}, \psi_2(y; t, \nu) \rangle|\le C\left\|\frac{v_{\mathrm{odd}}^0(x)}{x-\pi}\right\|_{\mathrm L^\infty}\left|\int_0^{2\pi}\frac{(x-\pi)^2}{\varepsilon^3}\rme^{-(x-\pi)^2/\varepsilon^2}\rmd x\right| \le\widetilde C\left\|\frac{v_{\mathrm{odd}}^0(x)}{x-\pi}\right\|_{\mathrm L^\infty}
\end{align*}
for some appropriate $\widetilde C$. Using the fact that
\begin{align*}
\frac{v_{\mathrm{odd}}^0(x)}{x-\pi}=\frac12\frac{v_0(x)-v_0(2\pi-x)}{x-\pi}=\frac12\frac{\int_{2\pi-x}^xv_0'(y)\rmd y}{x-\pi}\le C\|v_0'\|_{\mathrm L^\infty}\le C\|v_0\|_{\mathrm H^2_{\mathrm{per}}}
\end{align*}
we obtain the desired estimate. 
\end{proof}

Thus it remains to prove Proposition~\ref{evalProp}. We give a formal asymptotic analysis argument in Section~\ref{sec:overview}, which provides the intuition behind the relevant scaling. In Section~\ref{sec:rigorous} we prove the proposition rigorously.
\subsection{Overview and formal asymptotics}\label{sec:overview}
In this section we give a formal asymptotic analysis argument to provide intuition for our proof of Proposition~\ref{evalProp} and the form of the eigenfunctions (\ref{eq:tildephi}). The rigorous proof makes up the majority of this work and is given in Section~\ref{sec:rigorous}. We focus on the $n=1$, $2$ cases since all of the technical difficulties arise in these cases. Let $x\in[-\pi, \pi)$; then, using estimates (\ref{est:W}), the definition $\varepsilon^2:=2\nu t$, and formally dropping the higher order $\calO(\rme^{-1/\nu t})$ terms, the eigenfunction problem (\ref{eq:etildeprob}) is
\begin{align}\label{eq:etildeprob2}
\varepsilon^2\partial_{xx}\widetilde\varphi_n -\left[1- \frac{\pi^2}{\varepsilon^2}\sech^2\left(\frac{\pi x}{\varepsilon^2}\right)+\frac1{\varepsilon^2}\left(x-\pi \tanh\left(\frac{\pi x}{\varepsilon^2}\right)\right)^2\right]\widetilde\varphi_n=2t\lambda_n\widetilde\varphi_n.
\end{align}
Let $\widehat\lambda_n=2t\lambda_n$; rescaling space as $\zeta:= x/\varepsilon$ (which, for reasons which will become clear shortly, we call the ``slow scale") regularizes the problem, so that (\ref{eq:etildeprob2}) becomes
\begin{align}\label{eq:eslow}
\partial_{\zeta\zeta}\widehat\varphi_n -\left[1-\frac{\pi^2}{\varepsilon^2}\sech^2\left(\frac{\pi \zeta}\varepsilon\right)+\left(\zeta-\frac\pi\varepsilon\tanh\left(\frac{\pi \zeta}\varepsilon \right)\right)^2\right]\widehat\varphi_n =\widehat\lambda_n\widehat\varphi_n.
\end{align}
The functions $\tanh(\cdot)$ and $\sech(\cdot)$ have highly localized derivatives with
\begin{align*}
\sech\left(y \right)=\calO(\rme^{-y})\quad \text{and}\quad\tanh\left(\pm y\right)=\pm1+\calO(\rme^{-y})\quad\text{for } |y|\sim\infty.
\end{align*}
Thus, for $|\zeta|\in[\sqrt\varepsilon, \pi/\varepsilon]$, the terms $\frac1\varepsilon\sech(\pi\zeta/\varepsilon)$ and $\frac1\varepsilon[\pm1-\tanh(\pi\zeta/\varepsilon)]$ are $\calO(\frac1\varepsilon \rme^{-1/\sqrt\varepsilon})$. Then formally taking the limit $\varepsilon\to0$ of (\ref{eq:eslow}) results in the limiting eigenvalue problem
\begin{align*}
\partial_{\zeta\zeta}\widehat\varphi_n -[1+(\zeta+\pi/\varepsilon)^2]\widehat\varphi_n =&\widehat\lambda_n\widehat\varphi_n,\quad \text{for }\zeta <0\quad\text{and}\nonumber\\
\partial_{\zeta\zeta}\widehat\varphi_n -[1+(\zeta-\pi/\varepsilon)^2]\widehat\varphi_n =& \widehat\lambda_n\widehat\varphi_n,\quad \text{for }\zeta >0.
\end{align*}
We re-center the problem by defining $\xi:=\zeta-\pi/\varepsilon$ and the fact that $\widetilde\varphi_n(x-2\pi) = \widetilde\varphi_n(x)$ to get
\begin{align}\label{eq:eslow0}
\partial_{\xi\xi}\widehat\varphi_n -[1+\xi^2]\widehat\varphi_n =&\widehat\lambda_n\widehat\varphi_n
\end{align}
for $\xi \in  [-\pi/\varepsilon+\sqrt\varepsilon, \pi/\varepsilon-\sqrt\varepsilon]$ (which corresponds with $x\in I_s(\varepsilon)$ in Proposition~\ref{evalProp}). Equation (\ref{eq:eslow0}) has explicit eigenvalues $\widehat\lambda_n = -2n$  with associated eigenfunctions
\begin{align*}
\widehat\varphi_n(\xi) =& H_{n-1}(\xi)\rme^{-\xi^2/2}
\end{align*}
where $H_n(\xi)$ are the physicist's Hermite polynomials, the first few of which are
\[
H_0(y) = 1,\quad H_1(y) = 2y,\quad H_2(y) = 2(2y^2-1),\quad H_3(y) = 4y(2y^2-3).
\]
The slow variables, however, do not capture the behavior of the eigenfunctions for $|\xi|\ll\sqrt\varepsilon$ where the terms $\frac1\varepsilon\sech(\pi\xi/\varepsilon)$ and $\frac1\varepsilon[\pm1-\tanh(\pi\xi/\varepsilon)]$ are non-negligible. On the other hand, introducing the faster space scale $z:=x/\varepsilon^2$ (which we henceforth refer to as the ``fast scale"), equation (\ref{eq:etildeprob}) becomes
\begin{align}\label{eq:efast}
\partial_{zz}\widecheck\varphi_n -\left[\varepsilon^2+\pi^2-2\pi^2\sech^2\left(\pi z\right)+\varepsilon^4z^2-2\pi \varepsilon^2 z\tanh\left(\pi z \right)\right]\widecheck\varphi_n = \varepsilon^2\widehat\lambda_n\widecheck\varphi_n.
\end{align}
Hence, for $z\in[-1/\sqrt\varepsilon, 1/\sqrt\varepsilon]$ (which corresponds with $x\in I_f(\varepsilon)$ in Proposition~\ref{evalProp}), the terms $\varepsilon^2z$ are $\calO(\varepsilon^{3/2})$. Again formally taking the limit $\varepsilon\to0$ results in the limiting eigenvalue problem
\begin{align}\label{eq:efast0}
\partial_{zz}\widecheck\varphi_n +\pi^2[2\sech^2(\pi z)-1]\widecheck\varphi_n =& 0.
\end{align}
Equation (\ref{eq:efast0}) has two linearly independent solutions
\begin{align*}
P(z) =& \sech(\pi z)\quad\text{and}\quad Q(z) = \sinh(\pi z)+\pi z\sech(\pi z).
\end{align*}
We set $\widecheck\varphi_2(z;\widehat\lambda_n) = Q(z)$, anticipating that the fast eigenfunction does not depend, to leading order, on the eigenvalues $\widehat\lambda_n$. As we will show below, however, the matching occurs on the terms which exponentially grow like $\rme^{\pi z}$; thus, since $\sech(\pi z)$ is exponentially decaying, for $\widecheck\varphi_1$ we need to include the $\calO(\varepsilon^2)$ correction so that $\widecheck\varphi_1(z; \widehat\lambda_n)=P(z)+\varepsilon^2P_1(z; \widehat\lambda_n)$ where
\[
P_1(z;\widehat\lambda_n) = \frac{\widehat\lambda_n}{\pi^2}\cosh(\pi z)+\left(\frac{z^2}2+c\right)\sech(\pi z)
\]
solves
\[
\partial_z^2 P_1(z;\widehat\lambda_n) + \pi^2[2\sech^2(\pi z) -1] P_1(z; \widehat\lambda_n) = \left[1+\widehat\lambda_n-2\pi z\tanh(\pi z)  \right]P(z; \widehat\lambda_n).
\]
$P_1(x)$ now includes the exponentially growing term $\cosh(\pi z)$. 
The fast variables are complementary to the slow variables in the sense that now they do not capture the behavior of the eigenfunctions for $|z|\gg1/\sqrt\varepsilon$ where the terms $\varepsilon^2 z$ and $\varepsilon^4 z^2$ are non-negligible.

Our decomposition of the interval $[-\varepsilon^{3/2}, 2\pi-\varepsilon^{3/2}] = I_s(\varepsilon)\cup I_f(\varepsilon)$ now becomes clear. For $x\in I_s(\varepsilon)$, we expect the slow-variable eigenfunctions $\widehat\varphi$ to give a good approximation to $\widetilde\varphi$, whereas for $x\in I_f(\varepsilon)$ we expect the fast-variable eigenfunctions $\widecheck\varphi$ to give a good approximation. See Figure~\ref{fig:tildePhi}. 

We formally construct eigenfunctions $\widetilde\varphi_n(x)$ for (\ref{eq:etildeprob2}) by pasting a slow and a fast solution together; due to symmetry considerations, we glue $\widehat\varphi_n((x-\pi)/\varepsilon)$ with $\widecheck \varphi_1(x/\varepsilon^2; \widehat\lambda_n)$ for $n$ odd and to $\widecheck\varphi_2(z; \widehat\lambda_n)$ for $n$ even. The formal asymptotic analysis procedure is as follows. We add the formal eigenfunctions for (\ref{eq:eslow}) and (\ref{eq:efast}) with relative scaling $C_n$. We determine $C_n$ by requiring $\widehat\varphi_n((x-\pi)/\varepsilon) = C_n\widecheck\varphi_n(x/\varepsilon^2)$ in the overlap region $|x|\approx \varepsilon^{3/2}$. We then subtract the overlap at the matching point $x=\varepsilon^{3/2}$; we define the overlap function $\breve\varphi_n:=\widehat\varphi_n(\sqrt\varepsilon-\pi/\varepsilon) = C_n\widecheck\varphi_n(1/\sqrt\varepsilon)$. We consider $x\in[0,\pi]$; the analysis for $x\in[-\pi,0]$ is completely analogous by symmetry. 
The resulting eigenfunctions are of the form
\begin{align*}
\widetilde\varphi_1(x; t, \nu) =& \rme^{-(x-\pi)^2/2\varepsilon^2}+C_1\left[1+\frac{x^2}{2\varepsilon^2}+\varepsilon^2c\right]\sech\left(\frac{\pi x}{\varepsilon^2}\right)-C_1\frac{\varepsilon^2}{\pi^2}\cosh\left(\frac{\pi x}{\varepsilon^2}\right) - \breve\varphi_1\\
\widetilde\varphi_2(x; t, \nu) =& \frac{x-\pi}{\varepsilon}\rme^{-(x-\pi)^2/2\varepsilon^2}+C_2\sinh\left(\frac{\pi x}{\varepsilon^2}\right) +C_2\frac{\pi x}{\varepsilon^2}\sech\left(\frac{\pi x}{\varepsilon^2}\right) - \breve\varphi_2
\end{align*}
We define the spatial variable
\[
\eta := \frac x{\varepsilon^{3/2}}=\frac\zeta{\sqrt\varepsilon} = \sqrt\varepsilon z
\]
which captures the behavior of $\widetilde\varphi_n$ in the overlap region. 
Then, for $0<\eta=\calO(1)$, the matching conditions $C_n\widehat\varphi_n(x/\varepsilon) = \widecheck\varphi_n(x/\varepsilon^2)$ are 
\begin{align*}
\rme^{-\pi^2/2\varepsilon^2}\rme^{\eta\pi/\sqrt\varepsilon}\rme^{-\varepsilon\eta^2/2} =& C_1\left(1+\frac{\varepsilon\eta^2}2\right)\frac{2}{\rme^{\pi\eta/\sqrt\varepsilon}+\rme^{-\pi\eta/\sqrt\varepsilon}}-C_1\frac{\varepsilon^2}{2\pi^2}\left(\rme^{\pi\eta/\sqrt\varepsilon}+\rme^{-\pi\eta/\sqrt\varepsilon} \right)\\
\frac{\left(\pi+\varepsilon\sqrt\varepsilon\eta \right)}\varepsilon \rme^{-\pi^2/2\varepsilon^2}\rme^{\eta\pi/\sqrt\varepsilon}\rme^{-\varepsilon\eta^2/2} =&\frac{1}{2}\left(\rme^{\pi\eta/\sqrt\varepsilon}-\rme^{-\pi\eta/\sqrt\varepsilon} \right)+C_2\frac{\pi\eta}{\sqrt\varepsilon}\frac{2}{\rme^{\pi\eta/\sqrt\varepsilon}+\rme^{-\pi\eta/\sqrt\varepsilon}}.
\end{align*}
which to leading order becomes
\begin{align*}
\rme^{-\pi^2/2\varepsilon^2}\rme^{\eta\pi/\sqrt\varepsilon}=& -C_1\frac{\varepsilon^2}{2\pi^2}\rme^{\pi\eta/\sqrt\varepsilon}\quad\text{and}\quad \frac{\pi}\varepsilon \rme^{-\pi^2/2\varepsilon^2}\rme^{\eta\pi/\sqrt\varepsilon} =C_2\frac{1}{2}\rme^{\pi\eta/\sqrt\varepsilon}
\end{align*}
and is satisfied by $C_1 = \frac{-2\pi^2}{\varepsilon^2}\rme^{-\pi^2/2\varepsilon^2}$ and $C_2 = \frac{2\pi}{\varepsilon}\rme^{-\pi^2/2\varepsilon^2}$ with overlap
\[
\breve\varphi_1= \rme^{-\pi^2/2\varepsilon^2}\rme^{\pi x/\varepsilon^2}
\quad\text{and}\quad\breve\varphi_2=\frac\pi\varepsilon \rme^{-\pi^2/2\varepsilon^2}\rme^{\pi x/\varepsilon^2}
\]
We emphasize that the matching for both eigenfunctions was done using the coefficients in front of the exponentially growing terms $\rme^{\eta\pi/\sqrt\varepsilon}$ and is why we needed to include the first order correction term in $\widecheck\varphi_1(z)$. Putting everything together, and subtracting the overlap we get
\begin{align*}
\widetilde\varphi_1(x; t, \nu) =& \rme^{-(x-\pi)^2/2\varepsilon^2}-\rme^{-\pi^2/2\varepsilon^2}\left\{\frac{2\pi^2}{\varepsilon^2}\left[1+\frac{x^2}{2\varepsilon^2}+\varepsilon^2c\right]\sech\left(\frac{\pi x}{\varepsilon^2}\right)-2\cosh\left(\frac{\pi x}{\varepsilon^2}\right)\right\}-\rme^{-\pi^2/2\varepsilon^2}\rme^{\pi x/\varepsilon^2}\\
\widetilde\varphi_2(x; t, \nu) =& \frac1\varepsilon\left[ (x-\pi) \rme^{-(x-\pi)^2/2\varepsilon^2}+ 2\pi \rme^{-\pi^2/2\varepsilon^2}\sinh\left(\frac{\pi x}{\varepsilon^2}\right)-\pi \rme^{-\pi^2/2\varepsilon^2}\rme^{\pi x/\varepsilon^2}\right].
\end{align*}
The analysis for $x\in[-\pi,0]$ is completely analogous and the results can be extended to $x\in\R$ by periodicity. The asymptotic results agree with (\ref{eq:tildephi}). A schematic of the resulting eigenfunctions $\widetilde\varphi_1$ through $\widetilde\varphi_4$ is shown in Figure~\ref{fig:tildePhi}. 

\begin{figure}[h]
\centering
\subfloat[\small $\widetilde\varphi_1(x;\varepsilon)$ where $\widehat\varphi_1(\xi)\approx \rme^{-\xi^2/2}$ and $\widecheck\varphi_1(z; \widehat\lambda_1)\approx P(z)+\varepsilon^2P_1(z; -2)$]{\includegraphics[width = 0.45\textwidth]{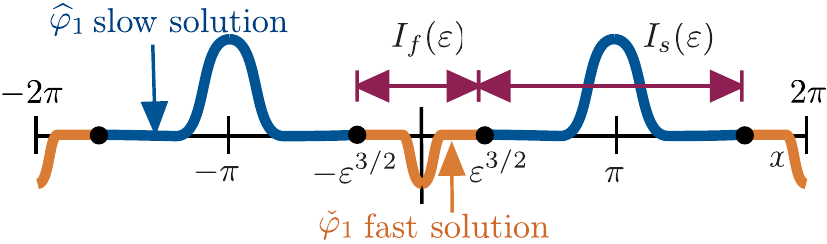}\label{fig:tildePhi1}}
\hspace{0.05\textwidth}
\subfloat[\small $\widetilde\varphi_2(x;\varepsilon)$ where $\widehat\varphi_2(\xi)\approx \xi \rme^{-\xi^2/2}$ and $\widecheck\varphi_2(z; \widehat\lambda_2)\approx Q(z)$]{\includegraphics[width = 0.45\textwidth]{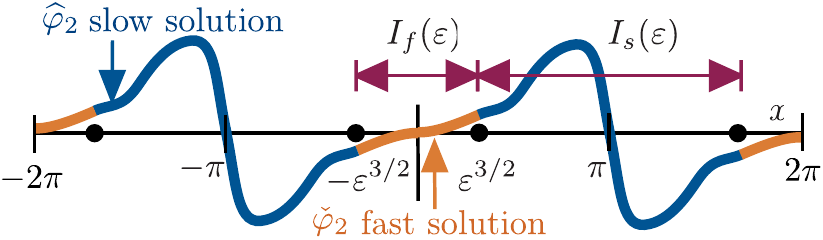}\label{fig:tildePhi2}}\\
\subfloat[\small $\widetilde\varphi_3(x;\varepsilon)$ where $\widehat\varphi_3(\xi)\approx (2\xi^2-1)\rme^{-\xi^2/2}$ and $\widecheck\varphi_1(z; \widehat\lambda_3)\approx P(z)+\varepsilon^2P_1(z; -6)$]{\includegraphics[width = 0.45\textwidth]{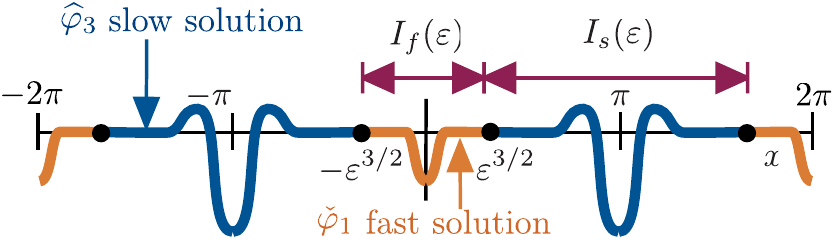}\label{fig:tildePhi3}}
\hspace{0.05\textwidth}
\subfloat[\small $\widetilde\varphi_4(x;\varepsilon)$ where $\widehat\varphi_4(\xi)\approx \xi(2\xi^2-3) \rme^{-\xi^2/2}$ and $\widecheck\varphi_2(z; \widehat\lambda_4)\approx Q(z)$]{\includegraphics[width = 0.45\textwidth]{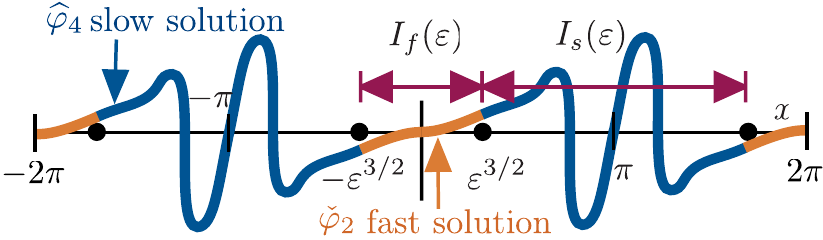}\label{fig:tildePhi4}}\\
\caption{Eigenfunctions for (\ref{eq:etildeprob}) constructed by gluing a slow solution $\widehat\varphi_n$ to a fast solution $\widecheck\varphi_n$. Due to symmetry considerations, we glue $\widehat\varphi_n$ to $\widecheck\varphi_1$ for $n$ odd and to $\widecheck\varphi_2$ for $n$ even. Figures not drawn to scale; in fact, the magnitude of $\widecheck\varphi_n$ is exponentially small relative to the magnitude of $\widehat\varphi_n$.}
\label{fig:tildePhi}
\end{figure}

We make a few observations. First, to leading order, the eigenvalues $\lambda_n=\widehat\lambda_n/2t = -n/t$ are given by the slow eigenvalue problem (\ref{eq:eslow}). Secondly, the contribution to $\widetilde\varphi_n(x)$ from the fast eigenfunctions $\widecheck\varphi_n(x/\varepsilon^2)$ is exponentially smaller than the contribution from the slow eigenfunctions $\widehat\varphi_n(x/\varepsilon)$. However, as we have already remarked, undoing transformation (\ref{eq:tildeTrans}), which is exponentially localized in $x\in I_f(\varepsilon)$, the behavior of eigenfunctions (\ref{eq:phi}) for (\ref{eq:eprob}) in $x\in I_f(\varepsilon)$ becomes relevant. Thus it is essential that we carefully construct the eigenfunctions in both the slow and the fast variables. 

In Sections~\ref{sec:slow}-\ref{sec:matching} we make the above formal arguments rigorous by computing the eigenfunctions for (\ref{eq:etildeprob}). 
In Sections~\ref{sec:slow} and \ref{sec:fast} we rigorously compute the eigenfunction in each of the spatial regimes, $I_s(\varepsilon)$ and $I_f(\varepsilon)$ respectively, using the spatial scaling motivated by the arguments above. We then rigorously match these solutions at the overlap point $x=\pm \varepsilon^{3/2}$ in Section~\ref{sec:matching}.
\section{Rigorous analysis of the eigenvalue problem}\label{sec:rigorous}
In Section~\ref{sec:overview} we provided a formal matched asymptotic analysis argument which gives the intuition behind Proposition~\ref{evalProp}, the key proposition for the proof of Theorems~\ref{evalThm} and \ref{convergeThm}. We anticipate that many readers will find the formal arguments sufficient. However, for the interested reader we provide in this section the rigorous analysis which shows that the results in Proposition~\ref{evalProp} are indeed valid. The proof of this result is technical and relies on many notations. In order to aid understanding of our arguments we have summarized our notation in Tables~\ref{tab:notation}-\ref{tab:notationFast} in Appendix~\ref{app:notation}.
\subsection{Slow variables}\label{sec:slow}
In this section we compute the eigenfunctions for (\ref{eq:etildeprob}) for $x\in I_s(\varepsilon)$. Motivated by the formal asymptotic analysis in Section~\ref{sec:overview} we define the slow variable $\xi:=(x-\pi)/\varepsilon$. We call the eigenfunctions in these coordinates $\widehat\varphi_n(\xi)$; they are defined for $\xi\in[-\pi/\varepsilon+\varepsilon^{1/2}, \pi/\varepsilon - \varepsilon^{1/2}]=:\widehat I_s(\varepsilon)$ and satisfy 
\begin{align}\label{eq:eslow2}
\partial_{\xi\xi}\widehat\varphi_n -\left[\widehat W_\xi(\xi; \varepsilon)+\widehat W^2(\xi; \varepsilon)\right]\widehat\varphi_n =\widehat\lambda_n\widehat\varphi_n
\end{align}
where $\widehat\lambda_n:=2t\lambda_n$ and for any $t\in\R^+$
\begin{align}\label{eq:Nhat}
\widehat W(\xi; \varepsilon):=&\frac t{\varepsilon}W_0(\varepsilon\xi+\pi, t;\nu) = \left[\xi-\frac{2\pi}\varepsilon\frac{\sum_{n\in\Z}n\widehat\exp_n(\xi; \varepsilon)}{\sum_{n\in\Z}\widehat\exp_n(\xi; \varepsilon)}\right],\nonumber\\
\widehat W_\xi(\xi; \varepsilon):=&t\left[\partial_xW_0\right](\varepsilon\xi+\pi, t; \nu)=\left[1-\frac{4\pi^2}{\varepsilon^2}\left(\frac{\sum_{n\in\Z}n^2\widehat\exp_n(\xi; \varepsilon)}{\sum_{n\in\Z}\widehat\exp_n(\xi; \varepsilon)}-\left(\frac{\sum_{n\in\Z}n\widehat\exp_n(\xi; \varepsilon)}{\sum_{n\in\Z}\widehat\exp_n(\xi; \varepsilon)}\right)^2 \right)\right],\nonumber\\
\text{and } \qquad \widehat\exp_n(\xi; \varepsilon):=&\Bigg\{\begin{array}{rcl}
\exp[-2n\pi(n\pi-\varepsilon\xi)/\varepsilon^2] &:& n\ge 0\\
\exp[2n\pi(-n\pi+\varepsilon\xi)/\varepsilon^2] &:& n\le 0
\end{array}
\end{align}
The form of $\widehat\exp_n(\xi; \varepsilon)$ follows from the same type of computations as for (\ref{eq:expbounds}) in Proposition~\ref{prop:estW}
\begin{align*}
\exp\left[\frac{-(\varepsilon\xi+2\pi-2n\pi)^2}{2\varepsilon^2}\right]=\exp\left[\frac{-(\varepsilon\xi-2\pi(n-1))^2}{2\varepsilon^2}\right]=\exp\left[\frac{-\xi^2}2\right]\exp\left[\frac{-2\pi(-\varepsilon\xi(n-1) +(n-1)^2)}{\varepsilon^2}\right],
\end{align*}
factoring out the dominant mode $\exp[-\xi^2/2]$ from the numerator and denominator and shifting $n$. We remark that even though $\widehat W_\xi(\xi;\varepsilon)$ is determined by an appropriate transformation of $\partial_xW_0(x, t;\nu)$, it is also true that $\partial_\xi \widehat W(\xi;\varepsilon) = \widehat W_\xi(\xi; \varepsilon)$; hence our notation. 

Motivated by the formal analysis we re-write (\ref{eq:eslow2}) as
\begin{align*}
\partial_{\xi\xi}\widehat\varphi_n -\left[1+\xi^2+\widehat{\mathcal N}(\xi; \varepsilon)\right]\widehat\varphi_n =(-2n+\widehat\Lambda_n)\widehat\varphi_n
\end{align*}
with $\widehat\Lambda_n:=\widehat\lambda_n +2n$ and $\widehat{\mathcal N}(\xi; \varepsilon):=\widehat W_\xi(\xi; \varepsilon)+\widehat W^2(\xi; \varepsilon)-(1+\xi^2)$,
which is equivalent to the first order system
\begin{align}\label{eq:slowFirst}
\partial_\xi \widehat U_n = \widehat{\mathcal A}_n(\xi)\widehat U_n+\widehat{\mathcal N}_n(\widehat U_n, \xi; \varepsilon, \widehat\Lambda_n)
\end{align}
where $\widehat U_n:=(\widehat \varphi_n, \widehat \psi_n)^T$ with $\widehat\psi_n:=\partial_\xi\widehat\varphi_n$,
\[
\quad \widehat{\mathcal A}_n:= \pmat{0&1\\1+\xi^2-2n&0}, \quad\text{and}\quad \widehat{\mathcal N}_n(\widehat \varphi_n, \widehat \psi_n, \xi; \varepsilon, \widehat\Lambda_n):=\pmat{0\\\left(\widehat{\mathcal N}(\xi; \varepsilon)+\widehat\Lambda_n\right)\widehat \varphi_n}.
\]
\begin{Lemma}\label{lemma:Nhat}
Fix $\widehat\varepsilon_1>0$. There exists $0<\widehat C(\widehat\varepsilon_1)<\infty$ such that for all $\varepsilon\le\widehat\varepsilon_1$ and $\xi\in\widehat I_s(\varepsilon)$,
\begin{subequations}
\begin{align}
\left|\widehat{\mathcal N}(\xi; \varepsilon)\right|\le& \frac {\widehat C(\widehat\varepsilon_1)}{\varepsilon^2}\exp[-\pi^2/\varepsilon^2]\exp[-(\pi-\varepsilon\xi)^2/\varepsilon^2]\exp[\xi^2] \label{eq:Nslow1}\\
\le&\frac {\widehat C(\widehat\varepsilon_1)}{\varepsilon^2}\exp[-2\pi/\sqrt\varepsilon].\label{eq:Nslow2}
\end{align}
\end{subequations}
\end{Lemma}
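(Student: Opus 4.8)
The plan is to imitate the estimate of the remainder $\mathcal{R}$ in the proof of Proposition~\ref{prop:estW}: express the quantities entering $\widehat{\mathcal N}$ as weighted averages over the lattice index $n$, normalized by the dominant weight $\widehat\exp_0 = 1$, and control the non-dominant contributions by a geometric series.

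First I would rewrite $\widehat{\mathcal N}$ in a convenient form. Writing $\langle f(n)\rangle := \left(\sum_{n\in\Z}f(n)\widehat\exp_n\right)/\left(\sum_{n\in\Z}\widehat\exp_n\right)$, one reads off from (\ref{eq:Nhat}) that $\widehat W - \xi = -\frac{2\pi}{\varepsilon}\langle n\rangle$ and $\widehat W_\xi - 1 = -\frac{4\pi^2}{\varepsilon^2}\bigl(\langle n^2\rangle - \langle n\rangle^2\bigr)$; expanding $\widehat W^2 - \xi^2 = (\widehat W-\xi)(\widehat W+\xi)$ with $\widehat W + \xi = 2\xi - \frac{2\pi}{\varepsilon}\langle n\rangle$ then gives
\[
\widehat{\mathcal N}(\xi;\varepsilon) = \frac{8\pi^2}{\varepsilon^2}\langle n\rangle^2 - \frac{4\pi^2}{\varepsilon^2}\langle n^2\rangle - \frac{4\pi\xi}{\varepsilon}\langle n\rangle .
\]
Since $W_0$ is odd about $x=0$ and $2\pi$-periodic it is odd about $x=\pi$, so $\widehat W(-\xi;\varepsilon) = -\widehat W(\xi;\varepsilon)$ and $\widehat{\mathcal N}$ is even in $\xi$; hence it suffices to prove the bound for $\xi\in[0,\pi/\varepsilon - \varepsilon^{1/2}]$.

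Next I would collect the elementary facts about $\widehat\exp_n(\xi;\varepsilon) = \exp[\frac{2\pi n\xi}{\varepsilon} - \frac{2\pi^2 n^2}{\varepsilon^2}]$ on this interval. Because $\widehat\exp_0 = 1$ the denominator $\sum_n\widehat\exp_n$ is $\ge 1$, so $|\langle n\rangle| \le T_1 := \sum_{n\neq0}|n|\,\widehat\exp_n$ and $\langle n^2\rangle \le T_2 := \sum_{n\neq0}n^2\widehat\exp_n$. For $\xi\ge 0$ one checks from the explicit exponent that $\widehat\exp_{-1}\le\widehat\exp_1$ and, using $\xi\le\pi/\varepsilon$, that $\widehat\exp_n\le\widehat\exp_1$ for every $n\neq 0$; moreover for $|n|\ge 2$ one has the crude bound $\widehat\exp_n\le q^{|n|}$ with $q := \exp[-2\pi^2/\varepsilon^2]<1$, and $q^k\le\widehat\exp_1$ for all $k\ge 1$ (again because $\xi\ge0$). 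Summing the tails, $T_1, T_2 \le \widehat C(\widehat\varepsilon_1)\,\widehat\exp_1$ for $\varepsilon\le\widehat\varepsilon_1$ (the series $\sum_{k\ge2}k^{j}q^{k}$ converge, and after factoring out $q^2\le\widehat\exp_1$ are bounded by a constant). Inserting these into the displayed identity, bounding $|\xi|/\varepsilon\le\pi/\varepsilon^2$ in the last term and $\widehat\exp_1^2\le\widehat\exp_1$ in the first, yields $|\widehat{\mathcal N}(\xi;\varepsilon)|\le \widehat C(\widehat\varepsilon_1)\varepsilon^{-2}\widehat\exp_1(\xi;\varepsilon)$. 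The algebraic identity $\widehat\exp_1(\xi;\varepsilon) = \exp[-\pi^2/\varepsilon^2]\exp[-(\pi-\varepsilon\xi)^2/\varepsilon^2]\exp[\xi^2]$ then gives (\ref{eq:Nslow1}); and since $\xi\le\pi/\varepsilon - \varepsilon^{1/2}$ forces $\pi-\varepsilon\xi\ge\varepsilon^{3/2}$, we get $\widehat\exp_1\le\exp[-2\pi/\sqrt\varepsilon]$, which is (\ref{eq:Nslow2}).

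The computation is routine; the one step that needs care is showing the non-dominant lattice terms are bounded by the single term $\widehat\exp_1$ rather than merely by an $\varepsilon$-independent constant — it is this that converts the $\varepsilon^{-2}$ prefactor into the exponentially small bound (\ref{eq:Nslow2}) that is needed later in the matching argument. That step uses both endpoints of $\widehat I_s(\varepsilon)$: the restriction $\xi\le\pi/\varepsilon - \varepsilon^{1/2}$ gives smallness of $\widehat\exp_1$ itself, while $\xi\ge 0$ (after the symmetry reduction) gives $\widehat\exp_n\le\widehat\exp_1$ and the summability of the tail.
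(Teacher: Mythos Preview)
Your proof is correct and follows essentially the same route as the paper's: rewrite $\widehat{\mathcal N}$ in terms of the moments $\langle n\rangle,\langle n^2\rangle$, use $\widehat\exp_0=1$ to bound the denominator below by $1$, and control the non-dominant lattice contributions by the single term $\widehat\exp_1$. The paper organizes the tail bound slightly more economically by setting $r:=\exp[-2\pi(\pi-\varepsilon|\xi|)/\varepsilon^2]$ (which is exactly your $\widehat\exp_1$ for $\xi\ge0$) and checking in one stroke that $\widehat\exp_n\le r^{|n|}$ for all $n$, then summing $\sum|n|^jr^{|n|}$ in closed form; this avoids your separate treatment of $n=\pm1$ versus $|n|\ge2$ and the auxiliary parameter $q$, but the substance is the same.
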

\begin{proof}
Define $r:=\exp[-2\pi(\pi-\varepsilon|\xi|)/\varepsilon^2]$. Then, due to (\ref{eq:Nhat}), $0<\widehat\exp_n(\xi; \varepsilon) \le r^{|n|}$ with $r\le\exp[-2\pi/\sqrt\varepsilon]<1$; furthermore, since $\widehat{\exp}_0(\xi;\varepsilon)=1$ for all $\xi$ and $\varepsilon$, $\sum_{n\in\Z}\widehat\exp_n(\xi; \varepsilon)\ge 1$. Thus there exists $0<\widehat C(\widehat\varepsilon_1)<\infty$ such that for all $\varepsilon\le\widehat\varepsilon_1$
\begin{align*}
\left|\widehat{\mathcal N}(\xi; \varepsilon)\right| =&\left|\widehat W_x(\xi; \varepsilon)+\widehat W^2(\xi; \varepsilon)-(1+\xi^2)\right|\nonumber\\
=&\left|\frac{8\pi^2}{\varepsilon^2}\left(\frac{\sum_{n\in\Z}n\widehat\exp_n(\xi; \varepsilon)}{\sum_{n\in\Z}\widehat\exp_n(\xi; \varepsilon)}\right)^2-\frac{4\pi^2}{\varepsilon^2}\frac{\sum_{n\in\Z}n^2\widehat\exp_n(\xi; \varepsilon)}{\sum_{n\in\Z}\widehat\exp_n(\xi; \varepsilon)}-\frac{4\pi\xi}\varepsilon\frac{\sum_{n\in\Z}n\widehat\exp_n(\xi; \varepsilon)}{\sum_{n\in\Z}\widehat\exp_n(\xi; \varepsilon)}\right|\nonumber\\
\le& \frac{4\pi}{\varepsilon^2} \left[2\left(\sum_{n\in\Z}|n|r^{|n|}\right)^2+\sum_{n\in\Z}n^2r^{|n|}+\varepsilon|\xi|\sum_{n\in\Z}|n|r^{|n|}\right]\nonumber\\
\le& \frac{4\pi}{\varepsilon^2} \left[2\left(\frac{2r}{(1-r)^2}\right)^2 + \frac{2r(1+r)}{(1-r)^3} + \varepsilon|\xi|\frac{2r}{(1-r)^2} \right]\nonumber\\
\le&\frac{\widehat C(\widehat\varepsilon_1)r}{\varepsilon^2}= \frac {\widehat C(\widehat\varepsilon_1)}{\varepsilon^2}\exp[-2\pi^2/\varepsilon^2]\exp[2\pi\xi/\varepsilon]\nonumber\\
=& \frac {\widehat C(\widehat\varepsilon_1)}{\varepsilon^2}\exp[-\pi^2/\varepsilon^2]\exp[-(\pi-\varepsilon\xi)^2/\varepsilon^2]\exp[\xi^2] \\
\le&\frac {\widehat C(\widehat\varepsilon_1)}{\varepsilon^2}\exp[-2\pi/\sqrt\varepsilon],
\end{align*}
using the fact that $\varepsilon|\xi|\le\pi-\varepsilon^{3/2}$.
\end{proof}
For $n\in\{1,2,3,4\}$ the leading-order evolution equation $\partial_\xi\widehat V_n = \widehat{\mathcal A}_n(\xi)\widehat V_n$ has the two linearly independent solutions $\widehat V_{n,j}(\xi)$, $j\in\{1,2\}$, where
\begin{alignat*}{3}
\widehat V_{1,1}(\xi) :=& \pmat{\rme^{-\xi^2/2}\\-\xi \rme^{-\xi^2/2}}\quad&
\widehat V_{1,2}(\xi) :=& \frac12\pmat{\sqrt\pi \rme^{-\xi^2/2}\erfi(\xi)\\\left[-\sqrt\pi\xi \rme^{-\xi^2}\erfi(\xi)+2\right]\rme^{\xi^2/2}}\\ 
\widehat V_{2,1}(\xi) :=& \pmat{\xi \rme^{-\xi^2/2}\\(1-\xi^2) \rme^{-\xi^2/2}}&
\widehat V_{2,2}(\xi) :=& \pmat{\left[1-\sqrt\pi\xi \rme^{-\xi^2}\erfi(\xi)\right]\rme^{\xi^2/2}\\\left[-\xi+\sqrt\pi (\xi^2-1) \rme^{-\xi^2}\erfi(\xi)\right]\rme^{\xi^2/2}}\\
\widehat V_{3,1}(\xi) :=& \pmat{(2\xi^2-1)\rme^{-\xi^2/2}\\\xi(5-2\xi^2)\rme^{-\xi^2/2}}&
\widehat V_{3,2}(\xi) :=& \frac14\pmat{\left[2\xi+\sqrt\pi(1-2\xi^2)\rme^{-\xi^2}\erfi(\xi)\right]\rme^{\xi^2/2}\\\left[4-2\xi^2+\sqrt\pi(2\xi^2-5)\xi \rme^{-\xi^2}\erfi(\xi)\right]\rme^{\xi^2/2}}\\ 
\widehat V_{4,1}(\xi) :=& \pmat{\xi(2\xi^2-3)\rme^{-\xi^2/2}\\(-2\xi^4+9\xi^2-3) \rme^{-\xi^2/2}}\quad&
\widehat V_{4,2}(\xi) :=& \frac16\pmat{\left[2-2\xi^2+\sqrt\pi\xi(2\xi^2-3)\rme^{-\xi^2}\erfi(\xi)\right]\rme^{\xi^2/2}\\\left[2\xi(\xi^2-4)+\sqrt\pi(-2\xi^4+9\xi^2-3)\rme^{-\xi^2}\erfi(\xi)\right]\rme^{\xi^2/2}},
\end{alignat*} 
as can be verified by explicit computation.
We solve (\ref{eq:slowFirst}) for $\xi\in \widehat I_s(\varepsilon):=[-\pi/\varepsilon+\sqrt\varepsilon,\pi/\varepsilon-\sqrt\varepsilon]$. We expect $\widehat\varphi_n(\xi)$ is close to the formal eigenfunction $H_{n-1}(\xi)\rme^{-\xi^2/2}$; thus, owing to symmetry considerations, we assume that $\widehat U_n(0)\in\Span{\widehat V_{n,1}(0)}$. We then parametrize the corresponding solution to (\ref{eq:slowFirst}) at the matching point $x=\pm\varepsilon^{3/2}$, which corresponds with $\xi=\mp(\pi/\varepsilon-\sqrt\varepsilon)=:\mp\xi_0$.
\begin{Proposition}\label{prop:slowSoln}
Define for every $\varepsilon$ the norm $\|u(\cdot)\|_{\varepsilon} = \sup_{\xi\in \widehat I_s(\varepsilon)} |u(\xi)|$; also define 
\begin{align*}
\breve\Lambda_1:= \frac1{\xi_0}\rme^{\xi_0^2}\widehat\Lambda_1,\quad \breve\Lambda_2:=\frac1{\xi_0^3}\rme^{\xi_0^2}\widehat\Lambda_2, \quad \breve\Lambda_3:= \frac1{\xi_0^5}\rme^{\xi_0^2}\widehat\Lambda_3,\quad \text{and}\quad\breve\Lambda_4:=\frac1{\xi_0^7}\rme^{\xi_0^2}\widehat\Lambda_4.
\end{align*} 
Then there exist constants $\widehat\varepsilon_0$,$\widehat\rho_1$,$\widehat\rho_2 > 0$ such that for all $0\le\varepsilon\le\widehat\varepsilon_0$ the set of all solutions to (\ref{eq:slowFirst}) with $\|u(\cdot)\|_\varepsilon\le\widehat\rho_1$, $\widehat U_n(0)=\widehat d_n\widehat V_{n,1}(0)$ and $|d_n|,|\breve\Lambda_n|\le\widehat\rho_2$ are given by 
\begin{align}\label{eq:slowSoln}
\widehat \varphi_1(\xi; \varepsilon, \breve\Lambda_1)=&\quad \widehat d_1\left[1+\calO(\varepsilon^{-2}\rme^{-2\pi/\sqrt\varepsilon}\ln\varepsilon + |\breve\Lambda_1|)\right]\rme^{-\xi^2/2}\nonumber\\
\widehat \psi_1(\xi; \varepsilon, \breve\Lambda_1)=& -\widehat d_1\left[1+\calO(\varepsilon^{-2}\rme^{-2\pi/\sqrt\varepsilon}\ln\varepsilon + |\breve\Lambda_1|)\right]\xi \rme^{-\xi^2/2}\nonumber\\
\widehat \varphi_2(\xi; \varepsilon, \breve\Lambda_2) =&\quad\widehat d_2\left[1+\calO(\varepsilon^{-2}\rme^{-2\pi/\sqrt\varepsilon}\ln\varepsilon + |\breve\Lambda_2|)\right]\xi \rme^{-\xi^2/2}\nonumber\\
\widehat \psi_2(\xi; \varepsilon, \breve\Lambda_2) =&-\widehat d_2\left[1+\calO(\varepsilon^{-2}\rme^{-2\pi/\sqrt\varepsilon}\ln\varepsilon + |\breve\Lambda_2|)\right](\xi^2-1)\rme^{-\xi^2/2},\nonumber\\
\widehat \varphi_3(\xi; \varepsilon, \breve\Lambda_3)=&\quad \widehat d_3\left[1+\calO(\varepsilon^{-2}\rme^{-2\pi/\sqrt\varepsilon}\ln\varepsilon + |\breve\Lambda_3|)\right](2\xi^2-1)\rme^{-\xi^2/2}\nonumber\\
\widehat \psi_3(\xi; \varepsilon, \breve\Lambda_3)=& -\widehat d_3\left[1+\calO(\varepsilon^{-2}\rme^{-2\pi/\sqrt\varepsilon}\ln\varepsilon + |\breve\Lambda_3|)\right]\xi(2\xi^2-5)\rme^{-\xi^2/2}\nonumber\\
\widehat \varphi_4(\xi; \varepsilon, \breve\Lambda_4) =&\quad\widehat d_4\left[1+\calO(\varepsilon^{-2}\rme^{-2\pi/\sqrt\varepsilon}\ln\varepsilon + |\breve\Lambda_4|)\right]\xi(2\xi^2-3) \rme^{-\xi^2/2}\nonumber\\
\widehat \psi_4(\xi; \varepsilon, \breve\Lambda_4) =&-\widehat d_4\left[1+\calO(\varepsilon^{-2}\rme^{-2\pi/\sqrt\varepsilon}\ln\varepsilon + |\breve\Lambda_4|)\right](2\xi^4-9\xi^2+3) \rme^{-\xi^2/2}
\end{align}
where the coefficients in front of $\breve\Lambda_n$ at the matching point $\xi=\xi_0$ are
\begin{alignat}{3}\label{eq:Melnikov}
\widehat \varphi_1 
:\quad&\frac{\sqrt\pi}{4}\left[1+\calO(\varepsilon^2)\right]\breve\Lambda_1  \qquad &\widehat \psi_1:\quad-&\frac{\sqrt\pi}{4}\left[1+\calO(\varepsilon^2)\right] \breve\Lambda_1\nonumber\\
\widehat \varphi_2
:\quad & \frac{\sqrt\pi}{8}\left[1+\calO(\varepsilon^2)\right]\breve\Lambda_2 \qquad&\widehat \psi_2:\quad-&\frac{\sqrt\pi}{8}\left[1+\calO(\varepsilon^2)\right]\breve\Lambda_2\nonumber\\
\widehat \varphi_3 
:\quad&\frac{\sqrt\pi}{8}\left[1+\calO(\varepsilon^2)\right]\breve\Lambda_1  \qquad &\widehat \psi_3:\quad-&\frac{\sqrt\pi}{8}\left[1+\calO(\varepsilon^2)\right] \breve\Lambda_1\nonumber\\
\widehat \varphi_4
:\quad & \frac{3\sqrt\pi}{16}\left[1+\calO(\varepsilon^2)\right]\breve\Lambda_2 \qquad&\widehat \psi_4:\quad-&\frac{3\sqrt\pi}{16}\left[1+\calO(\varepsilon^2)\right]\breve\Lambda_2.
\end{alignat}
Furthermore, 
\begin{alignat*}{7}
\widehat \varphi_1(-\xi; \cdot)&=&\widehat \varphi_1(\xi; \cdot),\qquad &\widehat \varphi_2(-\xi; \cdot)=-&\widehat \varphi_2(\xi; \cdot), \qquad &\widehat \varphi_3(-\xi; \cdot)=&\widehat \varphi_3(\xi; \cdot), \qquad &\widehat \varphi_4(-\xi; \cdot)=-&\widehat \varphi_4(\xi; \cdot)\\
\widehat \psi_1(-\xi; \cdot)&=-&\widehat \psi_1(\xi; \cdot), \qquad&\widehat \psi_2(-\xi; \cdot)=&\widehat \psi_2(\xi; \cdot), \qquad&\widehat \psi_3(-\xi; \cdot)=-&\widehat \psi_3(\xi; \cdot), \qquad&\widehat \psi_4(-\xi; \cdot)=&\widehat \psi_4(\xi; \cdot).
\end{alignat*}
\end{Proposition}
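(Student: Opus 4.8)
The plan is to regard (\ref{eq:slowFirst}) for what it is --- a \emph{linear}, non-autonomous ODE in $\widehat U_n$ whose ``nonlinearity'' $\widehat{\mathcal N}_n(\widehat U_n,\xi;\varepsilon,\widehat\Lambda_n)$ is a linear and exponentially small perturbation of the explicitly solvable leading system $\partial_\xi\widehat V=\widehat{\mathcal A}_n(\xi)\widehat V$. I would form the fundamental matrix $\widehat\Phi_n(\xi)=(\,\widehat V_{n,1}(\xi)\,|\,\widehat V_{n,2}(\xi)\,)$, note that its Wronskian $W_n$ is a nonzero constant because $\widehat{\mathcal A}_n$ is trace-free, and record the elementary large-$|\xi|$ asymptotics: $\widehat\varphi_{n,1}$ decays like $H_{n-1}(\xi)\rme^{-\xi^2/2}$ (up to a fixed constant), while, using $\erfi(\xi)\sim\rme^{\xi^2}/(\sqrt\pi\,\xi)$, $\widehat\varphi_{n,2}$ grows like $c_n\,\xi^{-n}\rme^{\xi^2/2}$ for an explicit constant $c_n$. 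Imposing $\widehat U_n(0)=\widehat d_n\widehat V_{n,1}(0)$ and applying variation of constants about $\xi=0$ replaces (\ref{eq:slowFirst}) by the Volterra equation
\[
\widehat U_n(\xi)=\widehat d_n\widehat V_{n,1}(\xi)+\int_0^\xi\widehat\Phi_n(\xi)\widehat\Phi_n(s)^{-1}\pmat{0\\(\widehat{\mathcal N}(s;\varepsilon)+\widehat\Lambda_n)\,\widehat\varphi_n(s)}\rmd s,
\]
and I would analyse the remainder $\widehat R_n:=\widehat U_n-\widehat d_n\widehat V_{n,1}$.

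The two facts that make this a contraction are Lemma~\ref{lemma:Nhat}, which gives $|\widehat{\mathcal N}(\cdot;\varepsilon)|\le\widehat C\,\varepsilon^{-2}\rme^{-2\pi/\sqrt\varepsilon}$ (the sharper pointwise bound (\ref{eq:Nslow1}) being kept in reserve), and the standing hypothesis $|\breve\Lambda_n|\le\widehat\rho_2$: since $\widehat\Lambda_n=\xi_0^{2n-1}\rme^{-\xi_0^2}\breve\Lambda_n$ with $\xi_0=\pi/\varepsilon-\sqrt\varepsilon$, the parameter $\widehat\Lambda_n$ is itself exponentially small. Working in the sup-norm on $C(\widehat I_s(\varepsilon))$ weighted to track the expected profile $\widehat\varphi_{n,1}$ (i.e.\ controlling $\widehat R_n^\varphi(\xi)\,\rme^{\xi^2/2}(1+|\xi|)^{-m}$ for a suitable $m$), the map defined by the right-hand side above is a contraction with constant $\calO(\varepsilon^{-2}\rme^{-2\pi/\sqrt\varepsilon}\ln\varepsilon+|\breve\Lambda_n|)$; its unique fixed point exists on all of $\widehat I_s(\varepsilon)$ and automatically obeys $\|\widehat U_n\|_\varepsilon\le\widehat\rho_1$ once $\widehat\rho_2$ is chosen small, so the a priori bound is a consequence rather than an additional restriction --- which is precisely what the phrase ``the set of all solutions $\ldots$ are given by'' asserts, the underlying equation being linear so that each admissible pair $(\widehat d_n,\breve\Lambda_n)$ determines exactly one solution.

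To obtain the size of $\widehat R_n$ and the coefficients (\ref{eq:Melnikov}) I would split the integral into its ``decaying channel'' $\widehat V_{n,1}(\xi)\int_0^\xi\widehat\varphi_{n,2}(s)(\cdots)\rmd s$ and its ``growing channel'' $\widehat V_{n,2}(\xi)\int_0^\xi\widehat\varphi_{n,1}(s)(\cdots)\rmd s$. In the decaying channel the growth of $\widehat\varphi_{n,2}$ offsets the decay of $\widehat\varphi_n\approx\widehat d_n\widehat\varphi_{n,1}$, leaving a factor $\calO(1/s)$; integrating the crude bound for $\widehat{\mathcal N}$ against $1/s$ out to $\xi_0\sim\pi/\varepsilon$ produces the $\ln\varepsilon$ and the stated relative error $\calO(\varepsilon^{-2}\rme^{-2\pi/\sqrt\varepsilon}\ln\varepsilon)$. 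In the growing channel the decay $\widehat\varphi_{n,1}\widehat\varphi_n\sim\rme^{-s^2}$ makes the $s$-integral convergent: the $\widehat{\mathcal N}$-part must be estimated with the sharp bound (\ref{eq:Nslow1}), so that the $\rme^{-\pi^2/\varepsilon^2}$ it carries dominates the $\rme^{\xi_0^2/2}\sim\rme^{\pi^2/2\varepsilon^2}$ coming from $\widehat\varphi_{n,2}(\xi_0)$ --- the crude bound would here produce a spurious $\rme^{\pi^2/\varepsilon^2}$ --- and it then turns out strictly smaller than the decaying-channel error, while the $\widehat\Lambda_n$-part is the Melnikov term
\[
\frac{\widehat d_n\widehat\Lambda_n}{W_n}\,\widehat\varphi_{n,2}(\xi_0)\int_0^{\xi_0}\widehat\varphi_{n,1}(s)^2\,\rmd s+(\text{smaller}).
\]
Substituting $\widehat\Lambda_n=\xi_0^{2n-1}\rme^{-\xi_0^2}\breve\Lambda_n$, the asymptotics $\widehat\varphi_{n,2}(\xi_0)\sim c_n\xi_0^{-n}\rme^{\xi_0^2/2}$ and $\widehat\varphi_{n,1}(\xi_0)$, the constant $W_n$, and the Gaussian moments $\int_0^\infty\widehat\varphi_{n,1}(s)^2\rmd s$ collapse, for $n=1,2,3,4$, to the coefficients $\tfrac{\sqrt\pi}{4},\tfrac{\sqrt\pi}{8},\tfrac{\sqrt\pi}{8},\tfrac{3\sqrt\pi}{16}$ multiplying $\breve\Lambda_n$, with the $\calO(\varepsilon^2)$ errors coming from replacing the finite $\int_0^{\xi_0}$ and the finite-$\xi_0$ asymptotics by their $\xi_0\to\infty$ limits. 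The parity assertions then cost nothing: $W_0$ is odd about $x=\pi$, so $\widehat W$ is odd and the scalar potential $1+\xi^2+\widehat{\mathcal N}(\xi;\varepsilon)$ is even, hence $\widehat\varphi_n(-\xi)$ solves the same second-order equation; and since $\widehat V_{n,1}(0)$ has vanishing second component for $n$ odd and vanishing first component for $n$ even, uniqueness of the initial-value problem forces $\widehat\varphi_n$ to be even (resp.\ odd) and $\widehat\psi_n$ odd (resp.\ even).

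The main obstacle is the growing homogeneous solution $\widehat V_{n,2}$: since on $\widehat I_s(\varepsilon)$ the factor $\rme^{\xi^2/2}$ climbs to $\rme^{\pi^2/2\varepsilon^2}$, one must verify that the super-exponential smallness of the forcing genuinely dominates this growth --- which is exactly the place where the sharp bound (\ref{eq:Nslow1}), rather than (\ref{eq:Nslow2}), is unavoidable --- and then extract the precise $\calO(1)$ Melnikov coefficients from the resulting fine cancellation, keeping careful track of the polynomial-in-$\varepsilon^{-1}$ factors (powers of $\xi_0$, the values $\widehat\varphi_{n,1}(\xi_0)$, and the moment integrals) so that the net powers of $\varepsilon$ agree with (\ref{eq:slowSoln}) and (\ref{eq:Melnikov}).
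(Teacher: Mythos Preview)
Your proposal is correct and follows essentially the same route as the paper: variation of constants about $\xi=0$ with the explicit fundamental system $(\widehat V_{n,1},\widehat V_{n,2})$, a contraction in a space that tracks the Hermite--Gaussian profile, the strategic use of the sharp bound (\ref{eq:Nslow1}) against the exponentially decaying integrand in the ``growing channel'' versus the crude bound (\ref{eq:Nslow2}) against the $\calO(1/s)$ integrand in the ``decaying channel'' (whence the $\ln\varepsilon$), and the Melnikov computation via the Gaussian moments $\int_0^\infty \widehat\varphi_{n,1}^2$. The only cosmetic difference is that the paper implements the weighting by the explicit ansatz $\widehat\varphi_n=H_{n-1}(\xi)\rme^{-\xi^2/2}\widehat u_n$ (working directly with the amplitude $\widehat u_n$) rather than a weighted norm, and it handles separately the zero of $(1-\xi^2)$ in the $\widehat\psi_2$ ansatz by splitting the interval at a fixed $\xi_1>1$ --- a minor technicality your weighted-norm formulation would also need to address.
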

We remark that the definition of $\breve\Lambda_n$ implies that the eigenvalues for (\ref{eq:etildeprob}) are exponentially close to the eigenvalues for (\ref{eq:eslow0}). This is consistent with our numerical simulations; we will show why this is a valid assumption in Section~\ref{sec:matching}. Note further that (\ref{eq:slowSoln}) shows that the eigenfunctions $\widehat\varphi_n$ are close to the formal eigenfunctions $H_{n-1}(\xi) \rme^{-\xi^2/2}$ as expected from the formal calculations in Section~\ref{sec:overview}. 
\begin{proof}
All solutions to (\ref{eq:slowFirst}) with initial data $\widehat U_n(0) = \widehat d_n\widehat V_{n,1}(0)$ satisfy the fixed point equation
\begin{align}\label{eq:VOCslow}
\widehat U_n(\xi) 
=&\widehat d_n\widehat V_{n,1}(\xi)+\widehat V_{n,1}(\xi)\int_{0}^\xi \langle \widehat W_{n,1}(\tau), \widehat{\mathcal N}_n (\widehat U_n(\tau), \tau; \varepsilon, \widehat\Lambda_n)  \rangle  \rmd\tau + \widehat V_{n,2}(\xi)\int_0^\xi \langle \widehat W_{n,2}(\tau),\widehat{\mathcal N}_n(\widehat U_n(\tau), \tau; \varepsilon, \widehat\Lambda_n)\rangle \rmd\tau
\end{align}
where
\begin{alignat*}{3}
\widehat W_{1,1}(\xi) :=& \frac12\pmat{\left[-\sqrt\pi\xi \rme^{-\xi^2}\erfi(\xi)+2\right]\rme^{\xi^2/2}\\-\sqrt\pi \rme^{-\xi^2/2}\erfi(\xi)}\quad&
\widehat W_{1,2}(\xi) :=& \pmat{\xi \rme^{-\xi^2/2}\\\rme^{-\xi^2/2}}\\
\widehat W_{2,1}(\xi) :=& \pmat{\left[\xi+\sqrt\pi (1-\xi^2) \rme^{-\xi^2}\erfi(\xi)\right]\rme^{\xi^2/2}\\\left[1-\sqrt\pi\xi \rme^{-\xi^2}\erfi(\xi)\right]\rme^{\xi^2/2}}\quad&
\widehat W_{2,2}(\xi) :=& \pmat{(1-\xi^2) \rme^{-\xi^2/2}\\-\xi \rme^{-\xi^2/2}},\\
\widehat W_{3,1}(\xi) :=& \frac14\pmat{\left[2\xi^2-4+\sqrt\pi(5-2\xi^2)\xi \rme^{-\xi^2}\erfi(\xi)\right]\rme^{\xi^2/2}\\\left[2\xi+\sqrt\pi(1-2\xi^2)\rme^{-\xi^2}\erfi(\xi)\right]\rme^{\xi^2/2}}&
\widehat W_{3,2}(\xi) :=& \pmat{\xi(5-2\xi^2)\rme^{-\xi^2/2}\\(1-2\xi^2)\rme^{-\xi^2/2}}\\
\widehat W_{4,1}(\xi) :=& \frac16\pmat{[2\xi(\xi^2-4)+\sqrt\pi(-2\xi^4+9\xi^2-3)\rme^{-\xi^2}\erfi(\xi)]\rme^{\xi^2/2}\\\left[2\xi^2-2+\sqrt\pi\xi(3-2\xi^2)\rme^{-\xi^2}\erfi(\xi)\right]\rme^{\xi^2/2}}\quad&
\widehat W_{4,2}(\xi) :=& \pmat{(2\xi^4-9\xi^2+3) \rme^{-\xi^2/2}\\\xi(2\xi^2-3)\rme^{-\xi^2/2}},
\end{alignat*}
are two linearly independent solutions to the associated adjoint equation $\widehat W_n' = -\widehat{\mathcal A}_n^*(\xi)\widehat W_n$,
which have been normalized so that $\langle \widehat V_{n,i}, \widehat W_{n,j} \rangle_{\R^2}=\delta_{ij}$.
Equation (\ref{eq:VOCslow}) is linear and defined for $\xi\in\R$; thus solutions exist and are bounded on any finite interval.
However, they may not be uniformly bounded in $\varepsilon$ since the interval of integration $\widehat I_s(\varepsilon)$ grows like $1/\varepsilon$. Our first goal, therefore, is to show that the constant bounding the higher order terms in (\ref{eq:slowSoln}) does not grow with $\widehat I_s(\varepsilon)$. 
Motivated by the formal analysis we use the ansatz $\widehat \varphi_n(\xi) =H_{n-1}(\xi)\rme^{-\xi^2/2} \widehat u_n(\xi)$ and $\widehat \psi_n(\xi) = \frac\rmd{\rmd\xi}\left[H_{n-1}(\xi)\rme^{-\xi^2/2}\right]\widehat v_n(\xi)$ to solve (\ref{eq:VOCslow}). We focus on $n=1$, $2$, since all of the technical difficulties arise in these cases; the $n=3$, $4$ cases can be proven completely analogously. The resulting evolution equations for $\widehat u_n$ and $\widehat v_n$ are
\begin{subequations}\label{eq:FPslowBreve}
\begin{align}
\widehat u_1(\xi; \varepsilon, \breve\Lambda_1) 
=&\widehat d_1-\frac{\sqrt\pi}2\int_0^\xi \rme^{-\tau^2}\erfi(\tau)\left(\widehat{\mathcal N}(\tau; \varepsilon)+\widehat\Lambda_1\right)\widehat u_1(\tau;  \varepsilon, \breve\Lambda_1) \rmd\tau \nonumber\\
&\quad+\frac{\sqrt\pi}2 \erfi(\xi) \int_0^\xi \rme^{-\tau^2}\left(\widehat{\mathcal N}(\tau; \varepsilon)+\widehat\Lambda_1\right)\widehat u_1(\tau;  \varepsilon, \breve\Lambda_1) \rmd\tau\nonumber\\
=&:\widehat{\mathcal F}_{1,u}(\widehat u_1; \varepsilon, \widehat d_1, \widehat\Lambda_1)\label{eq:FPslowBreveU1}\\
\widehat v_1(\xi; \varepsilon, \breve\Lambda_1) 
=&\widehat d_1- \frac{\sqrt\pi}2\int_0^\xi \rme^{-\tau^2}\erfi(\tau)\left(\widehat{\mathcal N}(\tau; \varepsilon)+\widehat\Lambda_1\right)\widehat u_1(\tau;  \varepsilon, \breve\Lambda_1) \rmd\tau\nonumber\\
&\quad-\frac1{2\xi} \left[2\rme^{\xi^2}-\sqrt\pi\xi \erfi(\xi)\right]\int_0^\xi \rme^{-\tau^2}\left(\widehat{\mathcal N}(\tau; \varepsilon)+\widehat\Lambda_1\right)\widehat u_1(\tau;  \varepsilon, \breve\Lambda_1) \rmd\tau\nonumber\\
=&:\widehat{\mathcal F}_{1,v}(\widehat u_1; \varepsilon, \widehat d_1, \widehat\Lambda_1)\label{eq:FPslowBreveV1}\\
\widehat u_2(\xi;  \varepsilon, \breve\Lambda_2) =&\widehat d_2+\int_0^\xi \tau \left[1-\sqrt\pi\tau \rme^{-\tau^2} \erfi(\tau)\right]\left(\widehat{\mathcal N}(\tau; \varepsilon)+\widehat\Lambda_2\right)\widehat u_2(\tau;  \varepsilon, \breve\Lambda_2) \rmd\tau \nonumber\\
&\quad-\frac1\xi \left[\rme^{\xi^2}-\sqrt\pi\xi \erfi(\xi)\right]\int_0^\xi \tau^2 \rme^{-\tau^2}\left(\widehat{\mathcal N}(\tau; \varepsilon)+\widehat\Lambda_2\right)\widehat u_2(\tau;  \varepsilon, \breve\Lambda_2) \rmd\tau\nonumber\\
=&:\widehat{\mathcal F}_{2,u}(\widehat u_2; \varepsilon, \widehat d_2, \widehat\Lambda_2)\label{eq:FPslowBreveU2}
\end{align}
All terms in (\ref{eq:FPslowBreveU1})-(\ref{eq:FPslowBreveU2}) are well defined for all $\xi$ since for $\xi$ small we have
\begin{align*}
\int_0^\xi \rme^{-\tau^2}\rmd\tau =& \xi-\frac{\xi^3}3+\calO(\xi^5)\qquad\text{and}\qquad\int_0^\xi \tau^2\rme^{-\tau^2}\rmd\tau = \frac{\xi^3}3+\calO(\xi^5).
\end{align*}
For $\psi_2(\xi)$ we fix $\xi_1>1$ and make the ansatz 
\[
\psi_2(\xi;  \varepsilon, \breve\Lambda_2)=\left\{\begin{array}{ccc}
\rme^{-\xi^2/2}\breve v_2(\xi;  \varepsilon, \breve\Lambda_2) &:& |\xi|\le\xi_1\\
\rme^{-\xi^2/2}\left[\breve v_2(|\xi_1|;  \varepsilon, \breve\Lambda_2)+(1-\xi^2)\widehat v_2(\xi;  \varepsilon, \breve\Lambda_2)\right] &:& |\xi|\ge\xi_1
\end{array}\right\}
\]
where $\breve v_2$ is defined for $|\xi|\le\xi_1$ and $\widehat v_2$ is defined for $|\xi|\ge\xi_1$ and
\begin{align}
\breve v_2(\xi;  \varepsilon, \breve\Lambda_2) =&\widehat d_2(1-\xi^2)+(1-\xi^2)\int_0^\xi \tau \left[1-\sqrt\pi\tau \rme^{-\tau} \erfi(\tau)\right]\left(\widehat{\mathcal N}(\tau; \varepsilon)+\widehat\Lambda_2\right)\widehat u_2(\tau;  \varepsilon, \breve\Lambda_2) \rmd\tau \nonumber\\
&\quad+\left[\xi \rme^{\xi^2}-\sqrt\pi (\xi^2-1) \erfi(\xi)\right]\int_0^\xi \tau^2 \rme^{-\tau^2}\left(\widehat{\mathcal N}(\tau; \varepsilon)+\widehat\Lambda_2\right)\widehat u_2(\tau;  \varepsilon, \breve\Lambda_2)\rmd\tau\nonumber\\
\widehat v_2(\xi;  \varepsilon, \breve\Lambda_2) =&\widehat d_2\frac{\xi_1^2-\xi^2}{1-\xi^2}+\int_{\xi_1}^\xi \tau \left[1-\sqrt\pi\tau \rme^{-\tau} \erfi(\tau)\right]\left(\widehat{\mathcal N}(\tau; \varepsilon)+\widehat\Lambda_2\right)\widehat u_2(\tau;  \varepsilon, \breve\Lambda_2) \rmd\tau \nonumber\\
&\quad-\frac1{\xi^2-1}\left[\xi \rme^{\xi^2}-\sqrt\pi (\xi^2-1) \erfi(\xi)\right]\int_{\xi_1}^\xi \tau^2 \rme^{-\tau^2}\left(\widehat{\mathcal N}(\tau; \varepsilon)+\widehat\Lambda_2\right)\widehat u_2(\tau;  \varepsilon, \breve\Lambda_2)\rmd\tau\nonumber\\
=&:\widehat{\mathcal F}_{2,v}(\widehat u_2; \varepsilon, \widehat d_2, \widehat\Lambda_2)\label{eq:FPslowBreveV2}.
\end{align}
\end{subequations}
Now $\breve v_2(\xi)$ is clearly uniformly bounded with
\[
\breve v_2(\xi;  \varepsilon, \breve\Lambda_2)= \widehat d_2(1-\xi^2) + \calO(\varepsilon^{-2}\rme^{-2\pi/\sqrt\varepsilon}\ln\varepsilon + |\breve\Lambda_2|) \quad\text{for } |\xi|\le\xi_1
\]
and $\widehat{\mathcal F}_{2,v}$ is well-defined for all $\xi\ge\xi_1$. Define 
$
\widehat{\mathcal D}_\varepsilon(\rho):=\{u\in\mathcal C^0(\widehat I_s(\varepsilon)) : \|u\|_\varepsilon\le \rho\}.
$
Our goal is to show there exists $\widehat\rho_1, \widehat\rho_2, \widehat\varepsilon_0\ll1$ small enough such that
\[
\widehat{\mathcal F}_{n,j}(\widehat u; \varepsilon, \widehat d_n, \breve \Lambda_n): \widehat{\mathcal D}_\varepsilon(\widehat\rho_1)\times\{\varepsilon\le\widehat\varepsilon_0\}\times\{|\widehat d_n|, |\breve\Lambda_n|\le \widehat\rho_2\}\to \widehat{\mathcal D}_\varepsilon(\widehat\rho_1)\quad\text{ with } j\in\{u,v\},
\]
whence $\widehat u_n(\xi;  \varepsilon, \breve\Lambda_n)$ and $\widehat v_n(\xi;  \varepsilon, \breve\Lambda_n)$ will be uniformly bounded in $\widehat I_s(\varepsilon)$. 
Using (\ref{eq:Nslow1}) to bound the nonlinearity when multiplied by an exponentially small integrand $\sim \rme^{-\tau^2}$ and (\ref{eq:Nslow2}) to bound the nonlinearity when multiplied by an algebraic integrand $\sim \rme^{-\tau^2}\erfi(\tau)$, and Claim~\ref{claim:C2} below, there exists a $0<C_2(\widehat\varepsilon_1)<\infty$ such that for all $\widehat u_1\in\widehat{\mathcal D}_\varepsilon(\rho)$ and $\varepsilon\le\widehat\varepsilon_2$,
\begin{align*}
\|\widehat{\mathcal F}_{1,u}(\widehat u_1; \varepsilon, \widehat d_1,  \xi_0\rme^{-\xi_0^2}\breve\Lambda_1)\|_\varepsilon \le& |\widehat d_1|+\frac{\sqrt\pi \rho}2 \bigg[\left(\frac {\widehat C(\widehat\varepsilon_1)}{\varepsilon^2}\rme^{-2\pi/\sqrt\varepsilon}+ \frac1\varepsilon \rme^{-\pi^2/\varepsilon^2}\rme^{2\pi/\sqrt\varepsilon}\breve\Lambda_1\right)\int_{0}^{\xi_0} \rme^{-\tau^2}\erfi(\tau)\rmd\tau \nonumber\\
&\quad+ \frac {\widehat C(\widehat\varepsilon_1)}{\varepsilon^2}\rme^{-\pi^2/\varepsilon^2}\erfi(\xi_0) \int_0^{\xi_0}\rme^{-(\pi-\varepsilon\tau)^2/\varepsilon^2}\rmd\tau+\xi_0\rme^{-\xi_0^2}\breve\Lambda_1 \erfi(\xi_0) \int_0^{\xi_0} \rme^{-\tau^2} \rmd\tau\bigg]\\
\le& |\widehat d_1|+\frac{\sqrt\pi \rho\widehat C_2(\widehat\varepsilon_1)}2 \bigg[\left(\frac {\widehat C(\widehat\varepsilon_1)}{\varepsilon^2}\rme^{-2\pi/\sqrt\varepsilon}+ \frac1\varepsilon \rme^{-\pi^2/\varepsilon^2}\rme^{2\pi/\sqrt\varepsilon}\breve\Lambda_1\right)\ln\varepsilon+ \frac {\widehat C(\widehat\varepsilon_1)}{\varepsilon} \rme^{-2\pi/\sqrt\varepsilon}+ \breve\Lambda_1 \bigg].
\end{align*}
It is now straightforward to show that there exist constants $\widehat\rho_1$, $\widehat\rho_2>0$ and $0<\widehat\varepsilon_0\le\widehat\varepsilon_2$ such that $\widehat{\mathcal F}_n(\widehat u_n; \varepsilon, \widehat d_n,  \rme^{-\xi_0^2}\breve\Lambda_n) \in \widehat{\mathcal D}_\varepsilon(\widehat\rho_1)$ for all $\widehat u_n\in\widehat{\mathcal D}_\varepsilon(\widehat\rho_1)$, $|\widehat d_n|, |\breve\Lambda_n|\le \widehat\rho_1$, and $\varepsilon\le\widehat\varepsilon_0$. We remark that the coefficients in $\breve\Lambda_1$ is $\calO(1)$ as a consequence of our choice of scaling of $\widehat\Lambda_1$.

A completely analogous argument holds for $\widehat{\mathcal F}_{1,v}$, $\widehat{\mathcal F}_{2,u}$, and $\widehat{\mathcal F}_{2,v}$, with the following modification
\begin{enumerate}
\item For $\widehat{\mathcal F}_{2,v}$ we use the function space $\widehat{\mathcal D}_\varepsilon(\rho):=\{u\in\mathcal C^0([\xi_1,\xi_0]) : \|u\|_\varepsilon\le \rho\}$.
\item For $\widehat{\mathcal F}_{2,u}$, in order to get the specific form of the $\calO(\varepsilon^{-2}\rme^{-2\pi/\sqrt\varepsilon}\ln\varepsilon + |\breve\Lambda_2|)$ we need  
\begin{align*}
\argmax_{\xi\in\widehat I_s(\varepsilon)} \left|\int_0^\xi \tau \left[1-\sqrt\pi\tau \rme^{-\tau^2} \erfi(\tau)\right]\rmd\tau\right|=
\argmax_{\xi\in\widehat I_s(\varepsilon)} \left|\frac1\xi \rme^{\xi^2}\left[1-\sqrt\pi\xi \rme^{-\xi^2}\erfi(\xi)\right]\int_0^\xi\tau^2\rme^{-\tau^2}\rmd\tau \right|= \pm\xi_0.
\end{align*}
In other words, we need to keep the minus signs and still show that the argmax occurs at the end of the interval $\widehat I_s(\varepsilon)$.
But this is true for all $\varepsilon$ small enough by using the asymptotic expansions shown in Table~\ref{tab:asySlow} to get 
\begin{align*}
&\lim_{\xi\to\infty} \int_0^\xi \tau \left[1-\sqrt\pi\tau \rme^{-\tau^2} \erfi(\tau)\right] = \lim_{\xi\to\infty}\left[\frac12\ln\left(\frac1\xi\right)+\calO\left(1\right) \right]\to-\infty\\
&\lim_{\xi\to\infty}\frac1\xi\left[\rme^{\xi^2}-\sqrt\pi\xi \erfi(\xi)\right]\int_0^\xi \tau^2 \rme^{-\tau^2}\rmd\tau = \lim_{\xi\to\infty} \rme^{\xi^2}\frac1{\xi^3} \left[-\frac{\sqrt\pi}8+\calO(1/\xi^2) \right]\to-\infty,
\end{align*}
and noting that the expressions are bounded on any bounded interval.
\item A similar issue as (ii) arises in $\widehat{\mathcal F}_{2,v}$; a completely analogous argument gives the desired result.
\end{enumerate}

Using the uniform bounds on $\widehat u_n$ we get estimates (\ref{eq:slowSoln}).
Plugging these estimates back into (\ref{eq:FPslowBreve}), again using Claim~\ref{claim:C2} and the asymptotic expansions shown in Table~\ref{tab:asySlow}, we can explicitly integrate the terms multiplying $\breve\Lambda_n$ to leading order at $\xi=\xi_0$ since $\widehat d_n$ is a constant. We obtain (\ref{eq:Melnikov}).

The symmetries then follow from the symmetry of the nonlinear term $\widehat{\mathcal N}(\xi; \varepsilon)$ which is an even function in $\xi$ since $W(x; \varepsilon)$ is odd and $W_x(x; \varepsilon)$ is even in $x$, as we noted in Section~\ref{sec:Whit}. Hence, for all even functions $\widehat u_n(\xi)$, $\widehat{\mathcal F}_n(\widehat u_n; \cdot)$ is even. Thus $\widehat u_n(\xi)$ and $\widehat v_n(\xi)$ are even and the symmetries for $\widehat\varphi_n$ and $\widehat\psi_n$ follow from the symmetries of $H_n(\xi)\rme^{-\xi^2/2}$.
\end{proof}
It remains to prove the following claim.
\begin{Claim}\label{claim:C2} 
Fix $\widehat\varepsilon_1$ as in Lemma~\ref{lemma:Nhat}. Then there exists
$0<\widehat C_2(\widehat\varepsilon_1)<\infty$ such that 
\begin{align*}
&\int_{0}^{\xi_0} \rme^{-\tau^2}\erfi(\tau)\rmd\tau \le \widehat C_2(\widehat\varepsilon_1)\ln\varepsilon\quad\text{and}\quad \erfi(\xi_0) \int_0^{\xi_0} \rme^{-\tau^2} \rmd\tau \le  \widehat C_2(\widehat\varepsilon_1)\varepsilon \rme^{\pi^2/\varepsilon^2}\rme^{-2\pi/\sqrt\varepsilon}
\end{align*}
and, moreover, such that
\begin{align*}
\erfi(\xi_0) &\int_0^{\xi_0}\rme^{-(\pi-\varepsilon\tau)^2/\varepsilon^2}\rmd\tau \le  \widehat C_2(\widehat\varepsilon_1)\varepsilon \rme^{\pi^2/\varepsilon^2}\rme^{-2\pi/\sqrt\varepsilon}.
\end{align*}
\end{Claim}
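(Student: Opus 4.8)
The strategy is to extract all three estimates from two elementary ingredients together with the standard asymptotics of $\erf$ and $\erfi$ (cf.\ Table~\ref{tab:asySlow}). First, $\int_0^{\xi_0}\rme^{-\tau^2}\,\rmd\tau = \frac{\sqrt\pi}{2}\erf(\xi_0) \le \frac{\sqrt\pi}{2}$ uniformly in $\varepsilon$. Second, $\erfi(\xi_0) = \frac{\rme^{\xi_0^2}}{\sqrt\pi\,\xi_0}\bigl(1+\calO(\xi_0^{-2})\bigr)$, so $\erfi(\xi_0) \le \frac{2\,\rme^{\xi_0^2}}{\sqrt\pi\,\xi_0}$ once $\xi_0$ is large enough. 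Recall $\xi_0 = \pi/\varepsilon - \sqrt\varepsilon$, whence $\xi_0^2 = \pi^2/\varepsilon^2 - 2\pi/\sqrt\varepsilon + \varepsilon$ and therefore $\rme^{\xi_0^2} = \rme^{\varepsilon}\,\rme^{\pi^2/\varepsilon^2}\,\rme^{-2\pi/\sqrt\varepsilon}$, while $\xi_0^{-1} = \varepsilon/(\pi - \varepsilon^{3/2})$. Since replacing $\widehat\varepsilon_1$ by any smaller positive number does not affect the conclusion of Lemma~\ref{lemma:Nhat}, we may and do assume $\widehat\varepsilon_1 \le 1/2$, so that on $(0,\widehat\varepsilon_1]$ we have $\pi - \varepsilon^{3/2} \ge \pi/2$, $\rme^{\varepsilon} \le \rme^{1/2}$, and $|\ln\varepsilon| \ge |\ln\widehat\varepsilon_1| > 0$.

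For the first estimate I would begin with the pointwise bound $\rme^{-\tau^2}\erfi(\tau) \le C_*\,(1+\tau)^{-1}$ valid for every $\tau \ge 0$, where $C_* := \sup_{\tau \ge 0}(1+\tau)\,\rme^{-\tau^2}\erfi(\tau)$ is an absolute constant: the function $\tau \mapsto (1+\tau)\,\rme^{-\tau^2}\erfi(\tau)$ is continuous, vanishes at $\tau = 0$, and tends to $1/\sqrt\pi$ as $\tau\to\infty$ by the $\erfi$ asymptotics, hence is bounded. Integrating, $\int_0^{\xi_0}\rme^{-\tau^2}\erfi(\tau)\,\rmd\tau \le C_*\ln(1+\xi_0) \le C_*\ln(1+\pi/\varepsilon) \le C_*\bigl(\ln(\pi+1) + |\ln\varepsilon|\bigr)$, and since $|\ln\varepsilon|$ is bounded below on $(0,\widehat\varepsilon_1]$ by the positive constant $|\ln\widehat\varepsilon_1|$, the additive term is absorbed and the right-hand side is at most $\widehat C_2\,|\ln\varepsilon|$ for a suitable $\widehat C_2$. (As the left-hand side is positive, the quantity $\ln\varepsilon$ in the statement is to be read as $|\ln\varepsilon|$, consistently with the way the estimate is used in the proof of Proposition~\ref{prop:slowSoln}.)

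The second estimate is then immediate: $\erfi(\xi_0)\int_0^{\xi_0}\rme^{-\tau^2}\,\rmd\tau \le \frac{\sqrt\pi}{2}\,\erfi(\xi_0) \le \frac{\sqrt\pi}{2}\cdot\frac{2\,\rme^{\xi_0^2}}{\sqrt\pi\,\xi_0} = \frac{\rme^{\xi_0^2}}{\xi_0} = \rme^{\varepsilon}\,\frac{\varepsilon}{\pi-\varepsilon^{3/2}}\,\rme^{\pi^2/\varepsilon^2}\,\rme^{-2\pi/\sqrt\varepsilon} \le \frac{2\,\rme^{1/2}}{\pi}\,\varepsilon\,\rme^{\pi^2/\varepsilon^2}\,\rme^{-2\pi/\sqrt\varepsilon}$, which is of the asserted form. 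For the third estimate I would change variables $s = (\pi - \varepsilon\tau)/\varepsilon = \pi/\varepsilon - \tau$, so that $\rmd s = -\rmd\tau$ and $\tau \in [0,\xi_0]$ corresponds to $s \in [\sqrt\varepsilon, \pi/\varepsilon]$; this gives $\int_0^{\xi_0}\rme^{-(\pi-\varepsilon\tau)^2/\varepsilon^2}\,\rmd\tau = \int_{\sqrt\varepsilon}^{\pi/\varepsilon}\rme^{-s^2}\,\rmd s \le \frac{\sqrt\pi}{2}$, after which $\erfi(\xi_0)$ times this integral is bounded exactly as in the second estimate. Taking $\widehat C_2(\widehat\varepsilon_1)$ to be the largest of the finitely many constants produced — and, since the bound $\erfi(\xi_0)\le 2\rme^{\xi_0^2}/(\sqrt\pi\,\xi_0)$ needs $\xi_0$ (hence $1/\varepsilon$) large while on any remaining compact range $\varepsilon\in[\delta,\widehat\varepsilon_1]$ all quantities are continuous and the claimed upper bounds are bounded below by positive constants, enlarging $\widehat C_2$ to cover that range — completes the proof.

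I do not expect any genuine difficulty; the two points that need a little care are (i) checking that $C_*$ is truly independent of $\varepsilon$, which is why the pointwise bound $\rme^{-\tau^2}\erfi(\tau) \le C_*(1+\tau)^{-1}$ is isolated before the $\varepsilon$-dependent integration, and (ii) tracking the prefactors $\rme^{\varepsilon}$ and $\xi_0^{-1}$ exactly in the last two estimates so that the exponential factor comes out as precisely $\rme^{\pi^2/\varepsilon^2}\rme^{-2\pi/\sqrt\varepsilon}$ with only a bounded multiple of $\varepsilon$ in front, which is the form required downstream.
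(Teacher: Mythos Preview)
Your proof is correct and follows essentially the same approach as the paper: both rely on the large-argument asymptotics $\erfi(\xi_0)\sim \rme^{\xi_0^2}/(\sqrt\pi\,\xi_0)$ together with the explicit identity $\xi_0^2=\pi^2/\varepsilon^2-2\pi/\sqrt\varepsilon+\varepsilon$, and both bound the Gaussian-type integrals by a constant (the paper extends $\int_0^{\xi_0}\rme^{-(\pi-\varepsilon\tau)^2/\varepsilon^2}\rmd\tau$ to all of $\R$ to get $\sqrt\pi$, whereas your change of variables gives the slightly sharper $\sqrt\pi/2$). The only real difference is in the first estimate, where the paper simply quotes the asymptotic expansion $\sqrt\pi\int_0^{\xi}\rme^{-\tau^2}\erfi(\tau)\,\rmd\tau=\ln\xi+\calO(1)$ from Table~\ref{tab:asySlow}, while you instead derive the same logarithmic bound from the elementary pointwise inequality $\rme^{-\tau^2}\erfi(\tau)\le C_*(1+\tau)^{-1}$; your route is more self-contained but equivalent in content.
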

\begin{proof}
The claim follows from the asymptotic expansions in Table~\ref{tab:asySlow}, the facts that
\begin{align*}
\int_0^{\xi_0} \rme^{-(\pi-\varepsilon\tau)^2/\varepsilon^2}\rmd\tau \le& \int_{-\infty}^\infty \rme^{-(\pi-\varepsilon\tau)^2/\varepsilon^2}\rmd\tau=\int_{-\infty}^\infty \rme^{-\tau^2}\rmd\tau=  \sqrt\pi
\end{align*}
due to symmetry, and the small argument approximation
$
\int_{0}^{\sqrt\varepsilon} \rme^{-\tau^2}\rmd\tau = \sqrt\varepsilon\left[1+\calO\left(\varepsilon \right)\right].
$
\end{proof}
\begin{table}[t]
\centering
\captionsetup{width=.7\linewidth}
\begin{tabular} {c c c }
\toprule
$\erfi(\xi)$ & \,& $\rme^{\xi^2}\frac1{\xi\sqrt\pi}\left[1+ \frac1{2\xi^2}+\calO\left(\frac1{\xi^4} \right)\right]$ \\
$\int_{0}^\xi \rme^{-\tau^2}\rmd\tau$ & \,&$\frac{\sqrt\pi}2-\rme^{-\xi^2}\frac1{2\xi}\left[1- \frac1{2\xi^2}+\calO\left(\frac1{\xi^4} \right)\right]$ \\
$\int_{0}^\xi \tau^2 \rme^{-\tau^2}\rmd\tau$ & \,&$ \frac{\sqrt\pi}4-\rme^{-\xi^2}\frac\xi4\left[2+ \frac1{\xi^2}+\calO\left(\frac1{\xi^4} \right)\right]$ \\
$\int_{0}^\xi \tau^4 \rme^{-\tau^2}\rmd\tau$ & \,&$ \frac{3\sqrt\pi}8-\rme^{-\xi^2}\frac{\xi^3}4\left[2+ \frac3{\xi^2}+\calO\left(\frac1{\xi^4} \right)\right]$ \\
$\int_{0}^\xi \tau^6 \rme^{-\tau^2}\rmd\tau$ & \,&$ \frac{15\sqrt\pi}{16}-\rme^{-\xi^2}\frac{\xi^5}4\left[2+ \frac5{\xi^2}+\calO\left(\frac1{\xi^4} \right)\right]$ \\
$\sqrt\pi\int_{0}^\xi \rme^{-\tau^2}\erfi(\tau)\rmd\tau$ & \,&$-\ln\left(\frac1\xi\right)-\frac12\psi^{(0)}\left(\frac12\right)+\calO\left(\frac1{\xi^2}\right)$ \\
$\int_{0}^\xi \tau[1-\sqrt\pi\tau \rme^{-\tau^2} \erfi(\tau)]\rmd\tau$ & \,&$\frac12\ln\left(\frac1\xi\right)+\frac14\psi^{(0)}\left(-\frac12\right)+\calO\left(\frac1{\xi^2}\right)$ \\
$\int_{0}^\xi \tau^3[1-\sqrt\pi\tau \rme^{-\tau^2} \erfi(\tau)]\rmd\tau$ & \,&$-\frac{\xi^2}4+\frac34\ln\left(\frac1\xi\right)+\frac38\psi^{(0)}\left(-\frac32\right)+\calO\left(\frac1{\xi^2}\right)$ \\
$\int_{0}^\xi \tau^5 [1-\sqrt\pi\tau \rme^{-\tau^2} \erfi(\tau)]\rmd\tau$ & \,&$-\frac{\xi^2(\xi^2+3)}8+\frac{15}8\ln\left(\frac1\xi\right)+\frac{15}{16}\psi^{(0)}\left(-\frac52\right)+\calO\left(\frac1{\xi^2}\right)$ \\
\bottomrule
\end{tabular}
\caption{The asymptotic behavior of all terms  in (\ref{eq:VOCslow}) for $\xi\gg 1$ and $n\in\{1,2,3,4\}$. The integrals and asymptotic expansions were computed using Mathematica. $\psi^{(0)}(x)$ is the digamma function, where $\psi^{(0)}(1/2) = -\gamma-\ln(4)$, $\psi^{(0)}(-1/2)=2-\gamma-\ln(4)$, $\psi^{(0)}(-3/2) = \frac83-\gamma-\ln(4)$, $\psi^{(0)}(-5/2)=\frac{45}{15}-\gamma-\ln(4)$, and $\gamma = \lim_{n\to\infty} \left( \sum_{n=1}^n\frac 1n-\ln n\right)$ is the Euler-Mascheroni constant. }
\label{tab:asySlow}
\end{table}
\subsection{Fast variables}\label{sec:fast}
In this section we compute the eigenfunctions for (\ref{eq:etildeprob}) for $x\in I_f(\varepsilon):=[-\varepsilon^{3/2}, \varepsilon^{3/2}]$. Motivated by the formal asymptotic analysis in Section~\ref{sec:overview} we define the fast variable $z:=x/\varepsilon^2$. We call the eigenfunctions in these coordinates $\widecheck\varphi_n(z)$; they are defined for $z\in[-1/\sqrt\varepsilon, 1/\sqrt\varepsilon]=:\widecheck I_f(\varepsilon)$ and satisfy 
\begin{align}\label{eq:efast2}
\partial_{zz}\widecheck\varphi_n -\left[\widecheck W_z(z; \varepsilon)+\widecheck W^2(z; \varepsilon)\right]\widecheck\varphi_n =\varepsilon^2\widehat\lambda_n\widecheck\varphi_n
\end{align}
where for any $t\in\R^+$
\begin{align*}
\widecheck W(z; \varepsilon):=&tW_0(\varepsilon^2 z, t;\nu),\nonumber\\
\widecheck W_z(z; \varepsilon):=&t\varepsilon^2\left[\partial_xW_0\right](\varepsilon^2 z, t; \nu),
\end{align*}
We remark that even though $\widecheck W_z(z;\varepsilon)$ is obtained through an appropriate transformation of $\partial_xW_0(x, t; \nu)$, it is also true that $\widecheck W_z(z;\varepsilon) = \partial_z\widecheck W(z;\varepsilon)$; hence our notation. 

Motivated by the formal analysis we re-write (\ref{eq:efast2}) as
\begin{align*}
\partial_{zz}\widecheck\varphi_n -\left[\pi^2-2\pi^2\sech^2(\pi z)+\widecheck{\mathcal N}(z; \varepsilon)\right]\widecheck\varphi_n =\varepsilon^2\widehat\lambda_n\widecheck\varphi_n
\end{align*}
with $\widecheck{\mathcal N}(z; \varepsilon):=\widecheck W_x(z; \varepsilon)+\widecheck W^2(z; \varepsilon)-\pi^2[1-2\sech^2(\pi z)]$, which is equivalent to the first order system
\begin{align}\label{eq:fastFirst}
\partial_z \widecheck U_n = \widecheck{\mathcal A}_n(z)\widecheck U_n+\widecheck{\mathcal N}_n(\widecheck U_n, z; \varepsilon, \widehat\Lambda_n)
\end{align}
where $\widecheck U_n:=(\widecheck \varphi_n, \widecheck \psi_n)^T$ with $\widecheck\psi_n:=\partial_z\widecheck\varphi_n$, $\widehat\lambda_n = -2n+\widehat\Lambda_n$ from Section~\ref{sec:slow},
\[
\quad \widecheck{\mathcal A}_n:= \pmat{0&1\\\pi^2[1-2\sech^2(\pi z)]&0}, \quad\text{and}\quad \widecheck{\mathcal N}_n(\widecheck \varphi_n, \widecheck \psi_n, z; \varepsilon, \widehat\Lambda_n):=\pmat{0\\\left(\widecheck{\mathcal N}(z; \varepsilon)+\varepsilon^2\widehat\lambda_n\right)\widecheck \varphi_n}.
\]

\begin{Lemma}\label{lem:Nexp}
Define $\widecheck {\mathcal N}_{\alg}(z; \varepsilon):= \varepsilon^2[1-2\pi z\tanh(\pi z)]+\varepsilon^4z^2$ and $\widecheck {\mathcal N}_{\exp}(z; \varepsilon):=\widecheck {\mathcal N}(z; \varepsilon)- \widecheck {\mathcal N}_{\alg}(z; \varepsilon)$. Then there exists $\widecheck\varepsilon_1>0$ and $0<\widecheck C(\widecheck\varepsilon_1)<\infty$ such that for all $\varepsilon\le\widecheck\varepsilon_1$ and $z\in\widecheck I_f(\varepsilon)$,
\[
\left| \widecheck {\mathcal N}_{\exp}(z; \varepsilon)\right| \le \widecheck C(\widecheck\varepsilon_1)\rme^{-1/\varepsilon^2}
\]
\end{Lemma}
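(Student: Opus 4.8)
The plan is to imitate the proof of Lemma~\ref{lemma:Nhat}, but to work directly from Proposition~\ref{prop:estW} rather than from the explicit exponential sums: in the fast window $x=\varepsilon^2 z$ with $z\in\widecheck I_f(\varepsilon)$ one has $|x|\le\varepsilon^{3/2}\le\pi$ once $\varepsilon\le1$, so the expansions of Proposition~\ref{prop:estW} apply verbatim. First I would record the scaling identities coming from $\varepsilon^2=2\nu t$: evaluated at $x=\varepsilon^2 z$,
\[
\frac{\pi x}{2\nu t}=\pi z,\qquad \frac{\pi^2}{2\nu t}=\frac{\pi^2}{\varepsilon^2},\qquad \frac1{\nu t}=\frac2{\varepsilon^2},\qquad \frac{\varepsilon^2}{t}=2\nu .
\]
Multiplying the first estimate of Proposition~\ref{prop:estW} by $t$ and the second by $t\varepsilon^2$, and using these identities together with $\widecheck W(z;\varepsilon)=tW_0(\varepsilon^2 z,t;\nu)$ and $\widecheck W_z(z;\varepsilon)=t\varepsilon^2[\partial_xW_0](\varepsilon^2 z,t;\nu)$, one obtains for all $\varepsilon$ with $\varepsilon\le1$ and $\varepsilon^2/2<\varepsilon_0$ the uniform-in-$z$ expansions
\[
\widecheck W(z;\varepsilon)=\bigl[\varepsilon^2 z-\pi\tanh(\pi z)\bigr]+E_W(z;\varepsilon),\qquad
\widecheck W_z(z;\varepsilon)=\bigl[\varepsilon^2-\pi^2\sech^2(\pi z)\bigr]+E_{W_z}(z;\varepsilon),
\]
with $|E_W|\le C(\varepsilon_0)\rme^{-2/\varepsilon^2}$ and $|E_{W_z}|\le 2\nu\,C(\varepsilon_0)\rme^{-2/\varepsilon^2}$.

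Next I would substitute these into the definition $\widecheck{\mathcal N}=\widecheck W_z+\widecheck W^2-\pi^2[1-2\sech^2(\pi z)]$, expand the square, and use $\tanh^2(\pi z)=1-\sech^2(\pi z)$. Writing $A(z;\varepsilon):=\varepsilon^2 z-\pi\tanh(\pi z)$, the leading algebraic terms collapse exactly:
\[
\bigl[\varepsilon^2-\pi^2\sech^2(\pi z)\bigr]+A(z;\varepsilon)^2-\pi^2\bigl[1-2\sech^2(\pi z)\bigr]
=\varepsilon^2\bigl[1-2\pi z\tanh(\pi z)\bigr]+\varepsilon^4 z^2=\widecheck{\mathcal N}_{\alg}(z;\varepsilon),
\]
so that
\[
\widecheck{\mathcal N}_{\exp}(z;\varepsilon)=\widecheck{\mathcal N}(z;\varepsilon)-\widecheck{\mathcal N}_{\alg}(z;\varepsilon)=E_{W_z}(z;\varepsilon)+2A(z;\varepsilon)E_W(z;\varepsilon)+E_W(z;\varepsilon)^2 .
\]
On $\widecheck I_f(\varepsilon)$ one has $|A(z;\varepsilon)|\le\varepsilon^2|z|+\pi\le\varepsilon^{3/2}+\pi\le\pi+1$, hence
\[
|\widecheck{\mathcal N}_{\exp}(z;\varepsilon)|\le\Bigl(2\nu\,C(\varepsilon_0)+2(\pi+1)C(\varepsilon_0)+C(\varepsilon_0)^2\rme^{-2/\varepsilon^2}\Bigr)\rme^{-2/\varepsilon^2}.
\]
Since $\nu\ll1$ is fixed and $\rme^{-2/\varepsilon^2}\le\rme^{-1/\varepsilon^2}$, setting $\widecheck\varepsilon_1:=\min\{1,\sqrt{2\varepsilon_0}\}$ and choosing $\widecheck C(\widecheck\varepsilon_1)$ as the bracketed constant (evaluated at $\varepsilon=\widecheck\varepsilon_1$) yields the claimed bound.

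The computation is routine and structurally parallel to Lemma~\ref{lemma:Nhat}; the one point that warrants care is the error term $E_{W_z}$ in the differentiated quantity $\widecheck W_z$. Differentiating $W_0$ costs a factor $1/(\nu t)=2/\varepsilon^2$, so one must keep track of the powers of $t$ in Proposition~\ref{prop:estW} when multiplying by $t\varepsilon^2$; using the stated $t^{-2}$ decay this produces only the benign prefactor $2\nu$, but even a polynomial-in-$\varepsilon^{-1}$ prefactor would be harmless, since it is absorbed by the gap between the exponent $2/\varepsilon^2$ that arises naturally and the weaker $1/\varepsilon^2$ claimed in the statement. An alternative, if one prefers to avoid relying on the precise $t$-powers in Proposition~\ref{prop:estW}, is to estimate $\widecheck{\mathcal N}_{\exp}$ directly from the explicit series representation of $\widecheck W$ and $\widecheck W_z$ in terms of $\widecheck\exp_n(z;\varepsilon)$, exactly as in the proof of Lemma~\ref{lemma:Nhat}, which exhibits the decay rate $\pi^2/\varepsilon^2$ explicitly.
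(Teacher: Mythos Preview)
Your proof is correct and follows exactly the approach the paper indicates: the paper's own proof is the single sentence ``The result follows from the definitions of $\widecheck W$ and $\widecheck W_z$ in terms of $W$ and estimates (\ref{est:W}),'' and you have carried out precisely that computation in detail, including the cancellation $\sech^2+\tanh^2=1$ that produces $\widecheck{\mathcal N}_{\alg}$ exactly. Your remark that any stray polynomial prefactor (such as the $2\nu$ arising from the $t$-powers in Proposition~\ref{prop:estW}) is absorbed by the gap between $\rme^{-2/\varepsilon^2}$ and the claimed $\rme^{-1/\varepsilon^2}$ is the right safety net; alternatively one simply uses the standing assumption $\nu\ll1$.
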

Thus, for all $\varepsilon\le\widecheck\varepsilon_1$, $\widecheck{\mathcal N}(z;\varepsilon)$ is exponentially close to $\widecheck{\mathcal N}_{\alg}(z;\varepsilon)$. In particular, there exists a constant $0<\widecheck C_1(\widecheck\varepsilon_1)<\infty$ such that for all $\varepsilon\le\widecheck\varepsilon_1$ and $z\in\widecheck I_f(\varepsilon)$
\[
\left| \widecheck {\mathcal N}(z; \varepsilon)\right| \le \widecheck C_1(\widecheck\varepsilon_1)\varepsilon^{3/2}
\]
\begin{proof}
The result follows from the definitions of $\widecheck W$ and $\widecheck W_z$ in terms of $W$ and estimates (\ref{est:W}). 
\end{proof}
The leading order evolution equation $\partial_z\widecheck V = \widecheck{\mathcal A}(z)\widecheck V$ has the two linearly independent solutions $\widecheck V_n(z)$, $j\in\{1,2\}$, where
\begin{alignat*}{3}
\widecheck V_1(z) :=& \pmat{-\sech(\pi z)\\\pi\sech(\pi z)\tanh(\pi z)}\quad&\text{and}\quad 
\widecheck V_2(z) :=& \frac1{2\pi}\pmat{\sinh(\pi z)+\pi z\sech(\pi z)\\\pi\big[\cosh(\pi z)+\sech(\pi z)-\pi z\sech(\pi z)\tanh(\pi z)\big]},
\end{alignat*} 
as can be verified by explicit computation. Observe that the leading order terms no longer depends on $n$. Due to symmetry considerations we construct purely even or purely odd eigenfunctions; thus we assume that either $\widecheck U_n(0)\in\Span{\widecheck V_1(0)}$ or $\Span{\widecheck V_2(0)}$. We then parametrize the corresponding solution to (\ref{eq:slowFirst}) at the matching point $x=\pm\varepsilon^{3/2}$, which corresponds with $z=\pm1/\sqrt\varepsilon=:\pm z_0$.
\begin{Proposition}\label{prop:fastSoln}
Define for every $\varepsilon$ the norm $\|u(\cdot)\|_{\varepsilon} = \sup_{z\in \widecheck I_f(\varepsilon)} |u(z)|$.
Then for each for $n\in\N$ there exist constants $\varepsilon_0$,$\widecheck\rho_1$,$\widecheck\rho_2 > 0$ such that for all $0\le\varepsilon\le\varepsilon_0$ the set of all solutions to (\ref{eq:fastFirst}) with $\widehat\lambda_n= -2n+\widehat\Lambda_n$, and which satisfy $\|u(\cdot)\|_\varepsilon\le\widecheck\rho_1$, with $|d_n|,|\widehat\Lambda_n|\le\widecheck\rho_2$ and $\widecheck U_n(0)=\widecheck d_n\widecheck V_1(0)$ are given by 
\begin{subequations}\label{eq:fastSoln}
\begin{align}
\widecheck \varphi_1(z; \varepsilon, \widehat\lambda_n)=&\widecheck d_n\left[-\sech^2(\pi z)\left(1+\frac{\varepsilon^2z^2}2 +\frac{n\varepsilon^2}{\pi^2}\right)+\frac{n\varepsilon^2}{\pi^2}+\calO_n(\varepsilon^{5/2} + \varepsilon^2|\widehat\Lambda_n|)\right]\cosh(\pi z),\nonumber\\
\widecheck \psi_1(z; \varepsilon, \widehat\lambda_n)=& \widecheck d_n\pi\left[\sech^2(\pi z)\left(1-\frac{\varepsilon^2z}\pi\coth(\pi z)+\frac{\varepsilon^2z^2}2+\frac{n\varepsilon^2}{\pi^2} \right)+\frac{n\varepsilon^2}{\pi^2}+\calO_n(\varepsilon^{5/2} + \varepsilon^2|\widehat\Lambda_n|)\right] \sinh(\pi z)
\end{align}
and for $\widecheck U_n(0)=\widecheck d_n\widecheck V_2(0)$ are given by
\begin{align}
\widecheck \varphi_2(z; \varepsilon, \widehat\lambda_n) =&\widecheck d_n\frac1{2\pi}\left[1+\calO_n(\varepsilon + \varepsilon^{3/2}|\widehat\Lambda_n|)\right]\left[\sinh(\pi z)+\pi z\sech(\pi z)\right],\nonumber\\
\widecheck \psi_2(z;  \varepsilon, \widehat\lambda_n) =&\widecheck d_n\frac12\left[1+\calO_n(\varepsilon + \varepsilon^{3/2}|\widehat\Lambda_n|)\right]\left[\cosh(\pi z)+\sech(\pi z)-\pi z\sech(\pi z)\tanh(\pi z)\right].
\end{align}
\end{subequations}
Furthermore, $\widecheck \varphi_1(-z)=\widecheck \varphi_1(z)$, $\widecheck \psi_1(-z)=-\widecheck \psi_1(z)$, $\widecheck \varphi_2(-z)=-\widecheck \varphi_2(z)$, and $\widecheck \psi_2(-z)=\widecheck \psi_2(z)$.
\end{Proposition}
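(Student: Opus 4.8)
The plan is to mirror the proof of Proposition~\ref{prop:slowSoln}, now with $\sech(\pi z)$ and $\cosh(\pi z)$ playing the roles that $\rme^{-\xi^2/2}$ and $\rme^{\xi^2/2}$ play there. First I would pass to a variation-of-parameters integral equation: letting $\widecheck W_1,\widecheck W_2$ denote the basis of solutions of the adjoint system $\widecheck W'=-\widecheck{\mathcal A}_n^*(z)\widecheck W$ normalised by $\langle\widecheck V_i(z),\widecheck W_j(z)\rangle_{\R^2}=\delta_{ij}$ — these are explicit since $\widecheck V_1,\widecheck V_2$ are, and $n$-independent since $\widecheck{\mathcal A}_n$ is — every solution of (\ref{eq:fastFirst}) with $\widecheck U_n(0)=\widecheck d_n\widecheck V_j(0)$ satisfies
\[
\widecheck U_n(z)=\widecheck d_n\widecheck V_j(z)+\widecheck V_1(z)\int_0^z\langle\widecheck W_1(\tau),\widecheck{\mathcal N}_n(\widecheck U_n(\tau),\tau;\varepsilon,\widehat\Lambda_n)\rangle\,\rmd\tau+\widecheck V_2(z)\int_0^z\langle\widecheck W_2(\tau),\widecheck{\mathcal N}_n(\widecheck U_n(\tau),\tau;\varepsilon,\widehat\Lambda_n)\rangle\,\rmd\tau,
\]
and conversely. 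By Lemma~\ref{lem:Nexp}, $\widecheck{\mathcal N}=\widecheck{\mathcal N}_{\alg}+\widecheck{\mathcal N}_{\exp}$ with $\widecheck{\mathcal N}_{\alg}(z;\varepsilon)=\varepsilon^2[1-2\pi z\tanh(\pi z)]+\varepsilon^4z^2$ explicit and $\widecheck{\mathcal N}_{\exp}=\calO(\rme^{-1/\varepsilon^2})$ negligible; with $\varepsilon^2\widehat\lambda_n=\varepsilon^2(-2n+\widehat\Lambda_n)$, the effective perturbation of the potential in (\ref{eq:efast2}) is thus $\varepsilon^2[1-2n-2\pi z\tanh(\pi z)]+\varepsilon^4z^2+\varepsilon^2\widehat\Lambda_n+\widecheck{\mathcal N}_{\exp}$. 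The key simplification relative to the slow régime is that $\widecheck I_f(\varepsilon)=[-1/\sqrt\varepsilon,1/\sqrt\varepsilon]$ has length only $\calO(\varepsilon^{-1/2})$, so the growing mode grows by at most $\cosh(\pi/\sqrt\varepsilon)$ across it, and a forcing term of size $\varepsilon^j z^k$ that decays like $\sech(\pi z)$ contributes $\calO(\varepsilon^j)$ to the corresponding integral while a growing one contributes $\calO(\varepsilon^{j-(k+1)/2})$.

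Consider first the case $\widecheck U_n(0)\in\Span{\widecheck V_2(0)}$, giving $\widecheck\varphi_2$, in which the principal solution $\widecheck V_2$ already grows. Using the component-wise ansatz $\widecheck\varphi_2=\widecheck d_n[\widecheck V_2(z)]_1\widecheck u_2(z)$, $\widecheck\psi_2=\widecheck d_n[\widecheck V_2(z)]_2\widecheck v_2(z)$ of the proof of Proposition~\ref{prop:slowSoln}, I would recast the integral equation as a fixed-point problem for $(\widecheck u_2,\widecheck v_2)$ in $\mathcal C^0(\widecheck I_f(\varepsilon))$ with the sup norm. Since $|\widecheck{\mathcal N}_{\alg}(\tau;\varepsilon)+\varepsilon^2\widehat\lambda_n|\le C_n\varepsilon^2(1+|\tau|+|\widehat\Lambda_n|)$ on $\widecheck I_f(\varepsilon)$, and, after dividing out the growing factor $\cosh(\pi z)$, the Green's function built from $\widecheck V_{1,2},\widecheck W_{1,2}$ is bounded uniformly in $\varepsilon$ for $|\tau|\le|z|$, the estimates of the previous paragraph (a growing forcing of size $\varepsilon^2(1+|z|)$, contributing $\calO(\varepsilon)$, plus the growing $\varepsilon^2\widehat\Lambda_n$-term, contributing $\calO(\varepsilon^{3/2}|\widehat\Lambda_n|)$) make the map a contraction on a ball of $\varepsilon$-independent radius for $\varepsilon\le\varepsilon_0$ and $|\widecheck d_n|,|\widehat\Lambda_n|\le\widecheck\rho_2$, producing $\widecheck u_2=1+\calO_n(\varepsilon+\varepsilon^{3/2}|\widehat\Lambda_n|)$ and similarly for $\widecheck v_2$ — which is the asserted expansion (\ref{eq:fastSoln}) for $(\widecheck\varphi_2,\widecheck\psi_2)$.

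The case $\widecheck U_n(0)\in\Span{\widecheck V_1(0)}$, giving $\widecheck\varphi_1$, is the delicate one, because $[\widecheck V_1]_1=-\sech(\pi z)$ \emph{decays}, whereas the matching carried out in Section~\ref{sec:matching} uses the $\rme^{\pi z}$-growing part of the eigenfunction, which for $\widecheck\varphi_1$ appears only at order $\varepsilon^2$, through the $\widecheck V_2(z)\int_0^z\langle\widecheck W_2,\cdot\rangle$ term. I would therefore peel off this contribution explicitly: set $\widecheck\varphi_1=\widecheck d_n\bigl([\widecheck V_1(z)]_1+\varepsilon^2\widecheck P_1(z;n)+\widecheck r_1(z)\bigr)$ (with the companion ansatz for $\widecheck\psi_1$), where $\widecheck P_1(z;n)$ (the fast-variable counterpart of the formal correction $P_1$ of Section~\ref{sec:overview}) is the unique solution of
\[
\partial_z^2\widecheck P_1+\pi^2[2\sech^2(\pi z)-1]\widecheck P_1=\bigl[1-2n-2\pi z\tanh(\pi z)\bigr][\widecheck V_1(z)]_1,\qquad\widecheck P_1(0)=\widecheck P_1'(0)=0,
\]
explicitly $\widecheck P_1(z;n)=\frac{n}{\pi^2}\cosh(\pi z)-\bigl(\frac{z^2}{2}+\frac{n}{\pi^2}\bigr)\sech(\pi z)$, whose $\cosh(\pi z)$-part is the growing term visible in (\ref{eq:fastSoln}). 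Substituting into the integral equation, the residual forcing driving $\widecheck r_1$ contains only $\widecheck{\mathcal N}_{\exp}[\widecheck V_1]_1$ (negligible), the $\sech$-decaying terms $\varepsilon^4z^2[\widecheck V_1]_1$ and $\varepsilon^2\widehat\Lambda_n[\widecheck V_1]_1$, and the $\calO(\varepsilon^4(1+|z|)\cosh(\pi z))$ self-interaction of $\varepsilon^2\widecheck P_1$ with the $\calO(\varepsilon^2(1+|z|))$ potential perturbation; measuring $\widecheck r_1$ in the weighted norm $\|u\|_\varepsilon:=\sup_{z\in\widecheck I_f(\varepsilon)}|u(z)/\cosh(\pi z)|$ (appropriate, since $\widecheck r_1$ inherits a possible $\cosh(\pi z)$-growth from $\widecheck V_2$) and using the integral estimates of the first paragraph — the decaying pieces giving $\calO(\varepsilon^{5/2})$ and $\calO(\varepsilon^2|\widehat\Lambda_n|)$ — a contraction argument gives $\|\widecheck r_1\|_\varepsilon=\calO_n(\varepsilon^{5/2}+\varepsilon^2|\widehat\Lambda_n|)$. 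Re-expressing $[\widecheck V_1]_1+\varepsilon^2\widecheck P_1$ via $-\sech^2(\pi z)\cosh(\pi z)=-\sech(\pi z)$ then puts the result in the bracketed form of (\ref{eq:fastSoln}). Finally, the parities are immediate: the potential $\pi^2-2\pi^2\sech^2(\pi z)+\widecheck{\mathcal N}(z;\varepsilon)$ in (\ref{eq:efast2}) is even in $z$ — $\widecheck W(z;\varepsilon)=tW_0(\varepsilon^2z,t;\nu)$ is odd and $\widecheck W_z$ even, inherited from the oddness of $W_0$ about $0$ noted in Section~\ref{sec:Whit} — so the integral operator preserves the parity of its first component, and since $[\widecheck V_1]_1$ and $\widecheck P_1$ are even while $[\widecheck V_2]_1$ is odd, the stated parities of $\widecheck\varphi_n,\widecheck\psi_n$ follow.

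I expect the main obstacle to be the $\widecheck V_1$-case, in two linked respects. First, correctly isolating the $\calO(\varepsilon^2)$ correction $\widecheck P_1$: a particular solution of its inhomogeneous equation is determined only modulo the homogeneous solutions $\sech(\pi z)$ and $\sinh(\pi z)+\pi z\sech(\pi z)$, so the two free constants must be pinned down (here by $\widecheck P_1(0)=\widecheck P_1'(0)=0$, which makes the ansatz consistent with $\widecheck U_n(0)=\widecheck d_n\widecheck V_1(0)$), and $\widecheck P_1$ must be obtained verbatim because Section~\ref{sec:matching} matches on its $\cosh(\pi z)$-coefficient. Second, running the remainder contraction in a norm weighted by the \emph{growing} mode $\cosh(\pi z)$, keeping precise track of how the $\calO(\varepsilon^{-1/2})$ length of $\widecheck I_f(\varepsilon)$ upgrades the merely pointwise bound $|\widecheck{\mathcal N}(z;\varepsilon)|\le\widecheck C_1(\widecheck\varepsilon_1)\varepsilon^{3/2}$ into the sharper integrated orders appearing in (\ref{eq:fastSoln}); in particular one must use the structured splitting $\widecheck{\mathcal N}=\widecheck{\mathcal N}_{\alg}+\widecheck{\mathcal N}_{\exp}$ rather than that pointwise bound when estimating the self-interaction term. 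The $\widecheck V_2$-case and the parity claim are routine adaptations of the corresponding parts of the proof of Proposition~\ref{prop:slowSoln}.
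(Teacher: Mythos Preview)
Your proposal is correct and follows essentially the same route as the paper: variation of parameters against the explicit basis $\widecheck V_1,\widecheck V_2$, a sup-norm fixed-point argument on $\widecheck I_f(\varepsilon)$ with the growing mode $\cosh(\pi z)$ factored out (your weighted norm $\sup|u/\cosh|$ is exactly the paper's ansatz $\widecheck\varphi_1=\cosh(\pi z)\,\widecheck u_1$), and the parity argument via evenness of the potential. The only organizational difference is that you compute the $\varepsilon^2$-correction $\widecheck P_1$ explicitly \emph{before} running the contraction for the remainder, whereas the paper first establishes $\widecheck u_1=-\sech^2(\pi z)+\calO_n(\varepsilon)$ by contraction and \emph{then} substitutes back into the integral equation, evaluating the explicit antiderivatives of $[\sinh\cosh+\pi\tau]\sech^2$, $\sech^2$, etc.\ against $\widecheck{\mathcal N}_{\alg}$ to extract the same $\varepsilon^2$-terms---the two orderings are equivalent.
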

We remark that for all $0<N<\infty$, it is possible to choose $\varepsilon_0$ and $\widehat\rho_2$ small enough (where $\widehat\rho_2$ was chosen in the proof of Proposition~\ref{prop:slowSoln}) such that $|\widehat\Lambda_n|\le\widecheck\rho_2$ whenever $\breve\Lambda_n\le\widehat\rho_2$ for all $n\le N$. We also remark that, unlike in the analogous proposition for the slow variables, Proposition~\ref{prop:fastSoln}, where we computed a different eigenfunction associated with each eigenvalue $\widehat\lambda_n \approx -2n$, here we have only two functions $\widecheck \varphi_1$ and $\widecheck \varphi_2$, which now take $\widehat\lambda_n$ as a parameter. This difference is in accord with the formal analysis which indicates that, at least to leading order, we expect that the fast eigenfunctions to solve the eigenvalue-independent equation
\[
\widecheck\varphi_{zz} +\pi^2[2\sech^2(\pi z)-1]\widecheck\varphi = 0.
\]
\begin{proof}
The argument is completely analogous to Proposition~\ref{prop:slowSoln} so we abbreviate the proof. The symmetries follow from the same argument as in Proposition~\ref{prop:slowSoln}. For the other claims we set up the fixed point equation on the space of bounded functions
\[
\widecheck{\mathcal D}_\varepsilon(\rho):=\{u\in\mathcal C^0(\widecheck I_f(\varepsilon)) : \|u\|_\varepsilon\le \rho\}.
\]
using the Variation of Parameters formula, the normalized adjoint eigenfunctions
\begin{align*}
\widehat W_1(z) :=&\frac1{2\pi}\pmat{-\pi\big[\cosh(\pi z)+\sech(\pi z)-\pi z\sech(\pi z)\tanh(\pi z)\big]\\\sinh(\pi z)+\pi z\sech(\pi z)} \quad\text{and}\quad
\widehat W_2(z) :=&\pmat{\pi\sech(\pi z)\tanh(\pi z)\\\sech(\pi z)}
\end{align*}
and the ansatz
\begin{alignat*}{3}
\widecheck \varphi_1(z;\varepsilon, \widehat\lambda_n) =&\cosh(\pi z) \widecheck u_1(z;\varepsilon, \widehat\lambda_n), \quad& \widecheck \varphi_2(z;\varepsilon, \widehat\lambda_n) =&\frac1{2\pi}\left[\sinh(\pi z)+\pi z\sech(\pi z)\right] \widecheck u_2(z;\varepsilon, \widehat\lambda_n),\\
 \widecheck \psi_1(z;\varepsilon, \widehat\lambda_n) =& \pi\sinh(\pi z)\widecheck v_1(z;\varepsilon, \widehat\lambda_n), \quad& \widecheck \psi_2(z;\varepsilon, \widehat\lambda_n) =&\frac12\left[\cosh(\pi z)+\sech(\pi z)-\pi z\sech(\pi z)\tanh(\pi z)\right]\widecheck v_2(z;\varepsilon, \widehat\lambda_n).
\end{alignat*}
We emphasize that $\widehat u_1$ exponentially grows in $z$, rather than exponentially decaying as the linear eigenfunction $\sech(\pi z)$ might suggest. This ansatz is motivated by the formal asymptotic analysis.
Owing to Claim~\ref{claim:fast} below the following expressions are well defined and bounded on any bounded interval
\begin{subequations}\label{eq:FPfastBreve}
\begin{flalign}
\widecheck u_1(z;\varepsilon, \widehat\lambda_n) 
=&-\widecheck d_1\sech^2(\pi z)&&\nonumber\\
&+\frac1{2\pi} \bigg[-\sech^2(\pi z)\int_{0}^z \left[\sinh(\pi\tau)\cosh(\pi\tau)+\pi\tau \right]\left(\widecheck{\mathcal N}(\tau; \varepsilon)+\varepsilon^2\widehat\lambda_n\right)\widecheck u_1(\tau;\varepsilon, \widehat\lambda_n) \rmd\tau &&\nonumber\\
&\qquad+ \left[\tanh(\pi z)+\pi z\sech^2(\pi z) \right] \int_0^z \left(\widecheck{\mathcal N}(\tau; \varepsilon)+\varepsilon^2\widehat\lambda_n\right)\widecheck u_1(\tau;\varepsilon, \widehat\lambda_n) \rmd\tau\bigg]&&\nonumber\\
=&:\widecheck{\mathcal F}_{1,u}(\widecheck u_1; \varepsilon, \widecheck d_1, -2n+\widehat\Lambda_n)&&\label{eq:FPfastBreveU1}
\end{flalign}
\begin{flalign}
\widecheck v_1(z;\varepsilon, \widehat\lambda_n) 
=&\widecheck d_1\sech^2(\pi z)&&\nonumber\\
&+\frac1{2\pi} \bigg[\sech^2(\pi z)\int_{0}^z \left[\sinh(\pi\tau)\cosh(\pi\tau)+\pi\tau \right]\left(\widecheck{\mathcal N}(\tau; \varepsilon)+\varepsilon^2\widehat\lambda_n\right)\widecheck u_1(\tau;\varepsilon, \widehat\lambda_n) \rmd\tau &&\nonumber\\
&+ \left[\coth(\pi z)-\pi z\sech^2(\pi z) +\frac1{\cosh(\pi z)\sinh(\pi z)}\right] \int_0^z \left(\widecheck{\mathcal N}(\tau; \varepsilon)+\varepsilon^2\widehat\lambda_n\right)\widecheck u_1(\tau;\varepsilon, \widehat\lambda_n) \rmd\tau\bigg]&&\nonumber\\
=&:\widecheck{\mathcal F}_{1,v}(\widecheck u_1; \varepsilon, \widecheck d_1, -2n+\widehat\Lambda_n)&&\label{eq:FPfastBreveV1}
\end{flalign}
\begin{flalign}
\widecheck u_2(z;\varepsilon, \widehat\lambda_n) =&\widecheck d_2&&\nonumber\\
&+\frac1{2\pi}\bigg[-\frac{1}{\cosh(\pi z)\sinh(\pi z)+\pi z}\int_{0}^z \left[\sinh(\pi\tau)+\pi\tau\sech(\pi\tau) \right]^2\left(\widecheck{\mathcal N}(\tau; \varepsilon)+\varepsilon^2\widehat\lambda_n\right)\widecheck u_2(\tau;\varepsilon, \widehat\lambda_n) \rmd\tau &&\nonumber\\
&\qquad+\int_0^z \sech(\pi \tau)\left[\sinh(\pi\tau)+\pi\tau\sech(\pi\tau) \right]\left(\widecheck{\mathcal N}(\tau; \varepsilon)+\varepsilon^2\widehat\lambda_n\right)\widecheck u_2(\tau;\varepsilon, \widehat\lambda_n) \rmd\tau\bigg]&&\nonumber\\
=&:\widecheck{\mathcal F}_{2,u}(\widecheck u_2; \varepsilon, \widecheck d_2, -2n+\widehat\Lambda_n)&&\label{eq:FPfastBreveU2}
\end{flalign}
\begin{flalign}
\widecheck v_2(z;\varepsilon, \widehat\lambda_n) =&\widecheck d_2&&\nonumber\\
&+\frac1{2\pi}\bigg[\frac{\tanh(\pi z)}{\cosh^2(\pi z)+1-\pi z\tanh(\pi z)}\int_{0}^z \left[\sinh(\pi\tau)+\pi\tau\sech(\pi z) \right]^2\left(\widecheck{\mathcal N}(\tau; \varepsilon)+\varepsilon^2\widehat\lambda_n\right)\widecheck u_2(\tau;\varepsilon, \widehat\lambda_n) \rmd\tau &&\nonumber\\
&\qquad+\int_0^z \sech(\pi \tau)\left[\sinh(\pi\tau)+\pi\tau\sech(\pi\tau) \right]\left(\widecheck{\mathcal N}(\tau; \varepsilon)+\varepsilon^2\widehat\lambda_n\right)\widecheck u_2(\tau;\varepsilon, \widehat\lambda_n) \rmd\tau\bigg]&&\nonumber\\
=&:\widecheck{\mathcal F}_{2,v}(\widecheck u_2; \varepsilon, \widecheck d_2, -2n+\widehat\Lambda_n)\label{eq:FPfastBreveV2}.
\end{flalign}
\end{subequations}
Thus $(\widecheck \varphi_n, \widecheck\psi_n)$ satisfies (\ref{eq:fastFirst}) if, and only if, $\widecheck u_n$ and $\widecheck v_n$ satisfy (\ref{eq:FPfastBreve}). Using Lemma~\ref{lem:Nexp} and Claim~\ref{claim:fast} below we find that for all $\widecheck u_n\in\widecheck{\mathcal D}_\varepsilon(\rho)$, $z\in\widecheck I_f(\varepsilon)$ there exists $0<\widecheck C_2(\widecheck\varepsilon_1)<\infty$ such that
\begin{align*}
&\|\widecheck{\mathcal F}_{1,u}(\widecheck u_1; \varepsilon, \widecheck d_1, -2n+\widehat\Lambda_n)\|_\varepsilon \\
&\qquad\le |\widecheck d_1|+\frac\rho{2\pi}\left(\widecheck C_1(\widecheck\varepsilon_1)\varepsilon^{3/2}+\varepsilon^2(-2n+\widehat\Lambda_n) \right) \\
&\qquad\qquad\times\left\|\sech^2(\pi z)\int_{0}^z \left[\sinh(\pi\tau)\cosh(\pi\tau)+\pi\tau \right] \rmd\tau+ \left[\tanh(\pi z)+\pi z\sech^2(\pi z) \right]\int_0^z\rmd\tau\right\|_\varepsilon\\
&\qquad\le |\widecheck d_1|+\frac\rho{2\pi}\left(\widecheck C_1(\widecheck\varepsilon_1)\varepsilon+\varepsilon^{3/2}(-2n+\widehat\Lambda_n) \right)\widecheck C_2(\widecheck\varepsilon_1)(\sqrt\varepsilon+1)
\end{align*}
It is now straightforward to show that there exists constants $\widecheck\rho_1$, $\widecheck\rho_2>0$ and $0<\widecheck\varepsilon_0\le\widecheck\varepsilon_1$ such that $\widecheck{\mathcal F}_{1,u}(\widecheck u_n; \varepsilon, \widecheck d_n, \widecheck\Lambda_n) \in \widecheck{\mathcal D}_\varepsilon(\widecheck\rho_1)$ for all $\widecheck u_n\in\widecheck{\mathcal D}_\varepsilon(\widecheck\rho_1)$, $|\widecheck d_n|, |\widecheck\Lambda_n|\le \widecheck\rho_1$, and $\varepsilon\le\widecheck\varepsilon_0$. A completely analogous argument holds for $\widecheck{\mathcal F}_{1,v}$, $\widecheck{\mathcal F}_{2,u}$, and $\widecheck{\mathcal F}_{2,v}$.
Using this uniform bound on $\widecheck u_n$ in (\ref{eq:FPfastBreve}) and again Claim~\ref{claim:fast} we get the expansions\footnote{The notation $\calO_n$ refers to the fact that the constant may depend on $n$.} 
\begin{alignat*}{3}
\widecheck u_1(z;\varepsilon, -2n+\widehat\Lambda_n)=&\widecheck d_n\left[-\sech^2(\pi z)+\calO_n(\varepsilon + \varepsilon^{3/2}|\widehat\Lambda_n|)\right],\qquad& \widecheck u_2(z;\varepsilon,  -2n+\widehat\Lambda_n) =&\widecheck d_n\frac1{2\pi}\left[1+\calO_n(\varepsilon + \varepsilon^{3/2}|\widehat\Lambda_n|)\right],\nonumber\\
\widecheck v_1(z;\varepsilon, -2n+\widehat\Lambda_n)=& \widecheck d_n\pi\left[-\sech^2(\pi z)+\calO_n(\varepsilon + \varepsilon^{3/2}|\widehat\Lambda_n|)\right],\qquad&\widecheck v_2(z;\varepsilon,  -2n+\widehat\Lambda_n) =&\widecheck d_n\frac12\left[1+\calO_n(\varepsilon + \varepsilon^{3/2}|\widehat\Lambda_n|)\right].
\end{alignat*}
We observe that the leading order terms for $\widecheck u_1(z)$ and $\widecheck v_1(z)$ at the matching point $z=\pm z_0$ are the $\calO(\varepsilon)$ terms since $\sech^2(\pi z_0) = \calO(\rme^{-2\pi/\sqrt\varepsilon})$. Thus we compute the next order terms by plugging the expansion for $\widecheck u_1(z)$ back into (\ref{eq:FPfastBreveU1}) and integrating explicitly using the form of $\widecheck N_\alg$ and
\begin{align*}
&\int_{0}^z \left[\sinh(\pi\tau)\cosh(\pi\tau)+\pi\tau \right]\sech^2(\pi\tau) \rmd\tau=z\tanh(\pi z)\\
&\int_{0}^z \left[\sinh(\pi\tau)\cosh(\pi\tau)+\pi\tau \right]\sech^2(\pi\tau)2\pi\tau\tanh(\pi\tau) \rmd\tau=\pi z^2\tanh^2(\pi z)\\
&\int_{0}^z \left[\sinh(\pi\tau)\cosh(\pi\tau)+\pi\tau \right]\sech^2(\pi\tau)\tau^2 \rmd\tau\\&=\frac1{3\pi^3}\left(6\pi z\Li_2(-\rme^{-2\pi z})+3\Li_3(-\rme^{-2\pi z})-2\pi^3z^3-6\pi^2z^2\ln(1+\rme^{-2\pi z})+3\pi^3z^3\tanh(\pi z)+\frac{9\zeta(3)}4\right)\\
&\int_0^z\sech^2(\pi\tau)\rmd\tau = \frac1\pi\tanh(\pi z)\\
&\int_0^z\sech^2(\pi\tau)2\pi\tau\tanh(\pi\tau)\rmd\tau = \frac1{\pi}\left(\tanh(\pi z)-\pi z\sech^2(\pi z)\right)\\
&\int_0^z\sech^2(\pi\tau)\tau^2\rmd\tau =\frac1{\pi^3}\left(\Li_2(-\rme^{-2\pi z})-\pi^2z^2-2\pi z\ln(1+\rme^{-2\pi z})+\pi^2z^2\tanh(\pi z)+\frac{\pi^2}{12}\right)
\end{align*}
where $\zeta(z)$ is the Riemann zeta function. We get (\ref{eq:fastSoln}).
\end{proof}
\begin{table}[t]
\centering
\captionsetup{width=.7\linewidth}
\begin{tabular} {c c c c c}
\toprule
 & &$z\to 0$& & $z\to\infty$  \\ 
\hline
$\Li_2(-\rme^{-2\pi z})$ & \,& $-\frac{\pi^2}{12}+2\pi z\ln(2)-\pi^2z^2+\frac{\pi^3z^3}3+\calO(z^4)$ & \,& $\rme^{-2\pi z}[-1+\calO(\rme^{-2\pi z})]$\\
$\Li_3(-\rme^{-2\pi z})$ & \,& $-\frac{3\zeta(3)}{4}+\frac{\pi^3 z}6-\pi^2z^2\ln(4)+\frac{2\pi^3z^3}3+\calO(z^4)$ & \,& $\rme^{-2\pi z}[-1+\calO(\rme^{-2\pi z})]$\\
$\ln(\rme^{-2\pi z}+1)$ & \,&$ \ln(2)-\pi z+\frac{\pi^2z^2}2+\calO(z^3)$ & \,& $\rme^{-2\pi z}[1+\calO(\rme^{-2\pi z})]$\\
$\cosh(\pi z)$ & \,& $1+\frac{\pi^2z^2}2+\calO(z^4)$ & \,& $\frac12\rme^{\pi z}\left(1+\calO\left(\rme^{-2\pi z}\right)\right)$\\
$\sinh(\pi z)$ & \,&$\pi z+\frac{\pi^3z^3}6+\calO(z^5)$ & \,& $\frac12\rme^{\pi z}\left(1+\calO\left(\rme^{-2\pi z}\right)\right)$\\
$\tanh(\pi z)$ & \,&$\pi z-\frac{\pi^3z^3}3+\calO(z^5)$ & \,& $1+\calO\left(\rme^{-2\pi z}\right)$\\
$\sech(\pi z)$ & \,&$1-\frac{\pi^2z^2}2+\calO(z^4)$ & \,& $\rme^{-\pi z}\left(2+\calO\left(\rme^{-2\pi z}\right)\right)$\\
$\csch(\pi z)$ & \,&$\frac1{\pi z}-\frac{\pi z}6+\calO(z^3)$ & \,& $\rme^{-\pi z}\left(2+\calO\left(\rme^{-2\pi z}\right)\right)$\\
$\coth(\pi z)$ & \,&$\frac1{\pi z}+\frac{\pi z}3+\calO(z^3)$ & \,& $1+\calO\left(\rme^{-2\pi z}\right)$\\
\bottomrule
\end{tabular}
\caption{The asymptotic behavior of relevant functions for the integrals in (\ref{eq:FPfastBreve}). $\Li_n(x)$ is the polylogarithm function and $\zeta(z)$ is the Riemann zeta function. Expansions computed using Mathematica.}
\label{tab:fastTerms}
\end{table}
It remains to prove the following claim.
\begin{Claim}\label{claim:fast}
All integrals in (\ref{eq:FPfastBreve}) are well defined and bounded on any bounded interval. Furthermore, there exists $\widecheck\varepsilon_2>0$ such that the maximum of each of the following integrals for $|z|\le z_0$ occurs at $z=\pm z_0:=\pm1/\sqrt\varepsilon$ for all $\varepsilon\le\widecheck\varepsilon_2$
\begin{enumerate}
\item $\max_{|z|\le z_0} \left|\left[\tanh(\pi z)+\pi z\sech^2(\pi z) \right]\int_0^z\rmd\tau\right| = z_0+\calO(z_0^2\rme^{-2\pi z_0})$
\item $\max_{|z|\le z_0} \left|\left[\coth(\pi z)-\pi z\sech^2(\pi z) +\frac1{\cosh(\pi z)\sinh(\pi z)}\right] \int_0^z\rmd\tau \right|= z_0+\calO(z_0^2\rme^{-2\pi z_0}) $
\item $\max_{|z|\le z_0} \left|\int_0^z \tanh(\pi \tau)\rmd\tau\right| = z_0-\frac{\ln2}\pi+\calO(\rme^{-2\pi z_0})$
\end{enumerate}
and so that the following integrals are bounded uniformly in $z_0$
\begin{enumerate}
\setcounter{enumi}{3}
\item $\left|\sech^2(\pi z)\int_{0}^z \left[\sinh(\pi\tau)\cosh(\pi\tau)+\pi\tau \right] \rmd\tau\right|$
\item $\left|\frac{1}{\cosh(\pi z)\sinh(\pi z)+\pi z}\int_{0}^z \left[\sinh^2(\pi\tau)+\pi\tau\tanh(\pi\tau) \right] \rmd\tau\right|$
\item $\left|\frac{\tanh(\pi z)}{\cosh^2(\pi z)+1-\pi z\tanh(\pi z)}\int_{0}^z \left[\sinh^2(\pi\tau)+\pi\tau\tanh(\pi z) \right]\rmd\tau\right|$
\end{enumerate}
\end{Claim}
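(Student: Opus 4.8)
The plan is to observe that all six integrands appearing in (\ref{eq:FPfastBreve}) are elementary combinations of $\sinh$, $\cosh$, $\tanh$, $\sech$, $\coth$, $\csch$ and monomials, so each inner integral has a closed form; I would evaluate these antiderivatives explicitly and then read off the endpoint values at $z_0 = 1/\sqrt\varepsilon$ and the uniform bounds directly from the asymptotic and Taylor expansions collected in Table~\ref{tab:fastTerms}. For well-definedness and boundedness on bounded intervals, note that away from $z=0$ all of the relevant functions are smooth, so the only issues are the apparent poles of $\coth(\pi z)$, $\csch(\pi z)$ and $1/(\cosh(\pi z)\sinh(\pi z))$ at the origin and the vanishing of the denominators $\cosh(\pi z)\sinh(\pi z) + \pi z$ and $\cosh^2(\pi z) + 1 - \pi z\tanh(\pi z)$ in (\ref{eq:FPfastBreveU2})--(\ref{eq:FPfastBreveV2}). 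Using the $z\to 0$ column of Table~\ref{tab:fastTerms} each of these is harmless: $\cosh(\pi z)\sinh(\pi z) + \pi z = 2\pi z + \calO(z^3)$ vanishes to exactly the order of the integral $\int_0^z [\sinh(\pi\tau) + \pi\tau\sech(\pi\tau)]^2\rmd\tau = \calO(z^3)$ it divides; $\cosh^2(\pi z) + 1 - \pi z\tanh(\pi z) \ge 2 + (\pi z)^2 - \pi z \ge \tfrac74$ is bounded away from zero; and the poles of $\coth$, $\csch$ at $z=0$ multiply integrals that vanish at $z=0$. Hence each integrand extends continuously to $\R$ and each integral is finite and bounded on compacta.

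Next I would treat items~(i)--(iii). Here $\int_0^z \rmd\tau = z$ and $\int_0^z \tanh(\pi\tau)\rmd\tau = \tfrac1\pi\ln\cosh(\pi z)$, so the three expressions are $g_1(z):=z[\tanh(\pi z) + \pi z\sech^2(\pi z)]$, $g_2(z):=z[\coth(\pi z) - \pi z\sech^2(\pi z) + \csch(\pi z)\sech(\pi z)]$ and $g_3(z):=\tfrac1\pi\ln\cosh(\pi z)$, each of which is even, so I restrict to $z\ge 0$. The stated endpoint values follow from the $z\to\infty$ column of Table~\ref{tab:fastTerms}: $\tanh(\pi z_0) = 1 + \calO(\rme^{-2\pi z_0})$, $\coth(\pi z_0) = 1 + \calO(\rme^{-2\pi z_0})$, both $z_0\sech^2(\pi z_0)$ and $\csch(\pi z_0)\sech(\pi z_0)$ are $\calO(\rme^{-2\pi z_0})$, and $\ln\cosh(\pi z_0) = \pi z_0 - \ln 2 + \calO(\rme^{-2\pi z_0})$, which give $g_1(z_0), g_2(z_0) = z_0 + \calO(z_0^2\rme^{-2\pi z_0})$ and $g_3(z_0) = z_0 - \tfrac{\ln 2}{\pi} + \calO(\rme^{-2\pi z_0})$. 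That the maximum is attained at $\pm z_0$ is immediate for $g_3$ since $g_3'(z) = \tanh(\pi z) \ge 0$ on $[0,\infty)$; for $g_1$ and $g_2$ I would instead verify $g_1'(z), g_2'(z) \to 1$ as $z\to\infty$ (again from Table~\ref{tab:fastTerms}), fix $A$ and $\delta > 0$ with $g_j' \ge \delta$ on $[A,\infty)$, note that each $g_j$ is bounded by some $M_A$ on the fixed compact interval $[0,A]$, and then choose $\widecheck\varepsilon_2$ small enough that $z_0 = 1/\sqrt\varepsilon > A$ and $g_j(z_0)\sim z_0 > M_A$; monotonicity on $[A, z_0]$ then forces the maximum over $[0,z_0]$ to be attained at $z_0$.

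For items~(iv)--(vi) I would compute the inner integrals in closed form using $\sinh(\pi\tau)\cosh(\pi\tau) = \tfrac12\sinh(2\pi\tau)$, $\int_0^z \pi\tau\,\rmd\tau = \tfrac{\pi z^2}{2}$, $\int_0^z \sinh^2(\pi\tau)\rmd\tau = \tfrac{\sinh(2\pi z)}{4\pi} - \tfrac z2$, and then simplify with $\cosh(2\pi z) - 1 = 2\sinh^2(\pi z)$; for instance item~(iv) collapses to $\tfrac{1}{2\pi}\tanh^2(\pi z) + \tfrac{\pi z^2}{2}\sech^2(\pi z)$, which is bounded uniformly in $z$ because $\tanh^2 \le 1$ and $s^2\sech^2 s$ is bounded on $\R$. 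For (v) and (vi) the numerator integrals grow like a constant multiple of $\rme^{2\pi z}$, while the denominators $\cosh(\pi z)\sinh(\pi z) + \pi z$ and $\cosh^2(\pi z) + 1 - \pi z\tanh(\pi z)$ also grow like $\rme^{2\pi z}$, so the quotients are bounded for large $z$; combined with the compact-interval bound above, they are bounded uniformly in $z_0$ (the extra factor $\tanh(\pi z) \le 1$ in (vi) only helps).

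I expect the one genuinely delicate point to be the maximum statement in items~(i)--(ii): unlike $g_3$, the functions $g_1$ and $g_2$ are \emph{not} monotone on all of $[0,\infty)$, only eventually so, and one therefore cannot simply invoke monotonicity but must instead splice the elementary bound on a fixed compact interval together with eventual monotonicity, crucially using that $z_0 = 1/\sqrt\varepsilon$ is forced to be large once $\varepsilon$ is small. Everything else is careful bookkeeping with the expansions recorded in Table~\ref{tab:fastTerms}.
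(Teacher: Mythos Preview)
Your proposal is correct and follows essentially the same approach as the paper: evaluate the inner integrals in closed form, check removability of the apparent singularities at $z=0$, use the asymptotics from Table~\ref{tab:fastTerms} to read off endpoint values and finite limits at infinity, and for (i)--(ii) combine a compact-interval bound with eventual growth to locate the maximum at $\pm z_0$. Your algebraic lower bound $\cosh^2(\pi z)+1-\pi z\tanh(\pi z)\ge 2+(\pi z)^2-\pi z\ge\tfrac74$ is in fact a bit cleaner than the paper's derivative argument for the nonvanishing of that denominator, and your explicit splicing of compact boundedness with eventual monotonicity for $g_1,g_2$ makes rigorous what the paper states more tersely.
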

\begin{proof}
To show that the integrals are well defined we need to check that they are finite for all $z$ bounded. This is clear for (i), (iii) and (iv) since each of these expressions at $z=0$ equals zero. For (vi) we observe that $\cosh^2(\pi z)+1-\pi z\tanh(\pi z)$ is never zero since $\cosh^2(0)+1-0\cdot\tanh(0)=2>0$ and at $\pi z=2$
\begin{align*}
\frac\rmd{\rmd z}\left[\cosh^2(\pi z)+1-\pi z\tanh(\pi z)\right]\bigg|_{\pi z=2}=& \pi\left[2\cosh(\pi z)\sinh(\pi z)-\tanh(\pi z)-\pi z\sech^2(\pi z)\right]\bigg|_{\pi z=2}\\
 =& \frac\pi4\left[\sinh(4\pi z)-4\pi z\right]\sech^2(\pi z)\bigg|_{\pi z=2}> 0
\end{align*}
since $\sinh(x)-x \ge 0$ for all $x\ge 0$ (this can be seen since $\sinh(0) = 0$ and $\frac\rmd{\rmd x}\sinh(x) = \cosh(x) \ge 1$).
Thus it remains to consider (ii) and (v), which may develop a singularity at $z=0$.

(ii) We explicitly evaluate the integral to obtain $f_2(z):=\left[\coth(\pi z)-\pi z\sech^2(\pi z) +\sech(\pi z)\csch(\pi z)\right]z$.
Using the asymptotic expansions in Table~\ref{tab:fastTerms} we get
\begin{align*}
\lim_{z\to0} f_2(z) = \lim_{z\to0}\left[\frac2\pi +\calO(z^2)\right] = \frac2\pi
\end{align*}

(v) We explicitly integrate to obtain
\begin{align*}
f_5(z)=&:\frac{1}{\cosh(\pi z)\sinh(\pi z)+\pi z}\int_{0}^z \left[\sinh^2(\pi\tau)+\pi\tau\tanh(\pi\tau) \right] \rmd\tau \\
=&\frac{1}{\cosh(\pi z)\sinh(\pi z)+\pi z}\left[ \frac{\sinh(2\pi z)}{4\pi}-\frac z2-\frac{\pi}{24}-\frac{\Li_2(-\rme^{-2\pi z})}{2\pi}+\frac{\pi z^2}{2}+z\ln(\rme^{-2\pi z}+1) \right]
\end{align*}
where $\Li_2(x)$ is the polylogarithm function.
Using the expansions in Table~\ref{tab:fastTerms} we find 
\begin{align*}
\lim_{z\to0} f_5(z) = \lim_{z\to0} \frac{1}{2\pi z+\calO(z^3)}\left[ \frac{2\pi^2z^3}3+\calO(z^5) \right]= \lim_{z\to0} \frac{1}{1+\calO(z^2)}\left[ \frac{\pi z^2}3+\calO(z^4) \right]=0.
\end{align*}

Away from $z=0$ we use the fact that each of the six expressions is even; thus, without loss of generality, we assume $z\ge 0$. 
For (i)-(iii) we first show that the maximum occurs at $z=z_0$ and then use the large argument asymptotic expansion of the integral to evaluate the maximum. For (iv) - (vi) we explicitly compute the expression in the limit $z\to\infty$ and it is bounded; thus, since we've already shown that each expression is bounded for $z=0$ and they are continuous, they are bounded for all $z>0$. 

(i) We explicitly evaluate the integral to obtain $f_1(z):=\left[\tanh(\pi z)+\pi z\sech^2(\pi z)\right]z$.
Then
\begin{align*}
\lim_{z\to\infty} \frac{f_1(z)}{z} = 1
\end{align*}
so that $f_1(z)\sim z$ as $z\to\infty$; thus, since $z_0 = 1/\sqrt\varepsilon\xrightarrow{\varepsilon\to0}\infty$, there exists $\widecheck\varepsilon_2$ such that 
\[
\max_{0\le z\le z_0} f_1(z) = f_1(z_0)=z_0\left(1+\calO(z_0\rme^{-2\pi z_0})\right)
\]
for all $\varepsilon\le\widecheck\varepsilon_2$, where $f_1(z_0)$ was determined using the asymptotic expansions in Table~\ref{tab:fastTerms}. 

(ii) Follows exactly as (i).

(iii) The fact that the maximum occurs at $z=z_0$ is clear since $\tanh(\pi z)$ is monotone increasing. We integrate explicitly and use the asymptotic expansion for $\ln(1+\rme^{-2\pi z_0})$ for $z_0\gg1$ shown in Table~\ref{tab:fastTerms} to get the asymptotic expansion.

(iv) We explicitly integrate to obtain
\begin{align*}
f_4(z):=&\sech^2(\pi z)\int_{0}^z \left[\sinh(\pi\tau)\cosh(\pi\tau)+\pi\tau \right] \rmd\tau = \sech^2{\pi z}\left[\frac{\cosh^2(\pi z)-1}{2\pi}+\frac{\pi z^2}2 \right]\\
=& \frac1{2\pi}+\frac1{2\pi}\left[-1+\pi^2 z^2\right]\sech^2{\pi z}.
\end{align*}
It is now clear that $\lim_{z\to\infty} f_4(z) = 0$.

(v) Using the expansions in Table~\ref{tab:fastTerms} and $f_5(z)$ defined above, we find 
\begin{align*}
\lim_{z\to\infty} f_5(z) = \lim_{z\to\infty} \frac{4\rme^{-2\pi z}}{1+ \calO(z\rme^{-2\pi z})}\left[ \frac{\rme^{2\pi z}}{8\pi}+\calO(1) \right]= \lim_{z\to\infty} \frac{1}{1+ \calO(z\rme^{-2\pi z})}\left[ \frac{1}{2\pi}+\calO(\rme^{-2\pi z}) \right]=\frac1{2\pi}.
\end{align*}

(vi) Follows exactly as (v).
\end{proof}
At the matching point $z=1/\sqrt\varepsilon$, we will need the following improved estimates on $\widecheck\varphi_1$ and $\widecheck\psi_1$, which can be obtained by substituting (\ref{eq:fastSoln}) back into (\ref{eq:FPfastBreve}) one more time.
\begin{Proposition}\label{prop:fastSoln2}
Let $\varepsilon_0$,$\widecheck\rho_1$,$\widecheck\rho_2 > 0$ be as in Proposition~\ref{prop:fastSoln}. Then the set of all solutions to (\ref{eq:fastFirst}) with $\widehat\lambda_n= -2n+\widehat\Lambda_n$, $\|u(z)\|_\varepsilon\le\widecheck\rho_1$,  $|d_n|,|\widehat\Lambda_n|\le\widecheck\rho_2$ and $\widecheck U_n(0)=\widecheck d_n\widecheck V_1(0)$ are given at the matching point $z_0=1/\sqrt\varepsilon$ by
\begin{align}\label{eq:fastSoln2}
\widecheck \varphi_1(z_0; \varepsilon, \widehat\lambda_n)=&\widecheck d_n\left[\frac{n\varepsilon^2}{\pi^2}-\frac{n\varepsilon^3}{2\pi^2}+\calO_n(\varepsilon^{7/2} + \varepsilon^2|\widehat\Lambda_n|)\right]\cosh(\pi z_0),\nonumber\\
\widecheck \psi_1(z_0; \varepsilon, \widehat\lambda_n)=& \widecheck d_n\pi\left[\frac{n\varepsilon^2}{\pi^2}-\frac{n\varepsilon^3}{2\pi^2}+\calO_n(\varepsilon^{7/2} + \varepsilon^2|\widehat\Lambda_n|)\right] \sinh(\pi z_0)
\end{align}
\end{Proposition}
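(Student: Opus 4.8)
As the remark preceding the statement indicates, the plan is to run the fixed‑point equations (\ref{eq:FPfastBreveU1}) and (\ref{eq:FPfastBreveV1}) through one more iteration, feeding in the expansions (\ref{eq:fastSoln}) for $\widecheck u_1$ and then specializing to the matching point $z=z_0=1/\sqrt\varepsilon$. First I would rewrite the $n=1$ expansion from Proposition~\ref{prop:fastSoln} in the equivalent form $\widecheck u_1(\tau;\varepsilon,\widehat\lambda_n)=\widecheck d_n\bigl[-(1+\tfrac{\varepsilon^2\tau^2}{2})\sech^2(\pi\tau)+\tfrac{n\varepsilon^2}{\pi^2}\tanh^2(\pi\tau)\bigr]+\calO_n(\varepsilon^{5/2}+\varepsilon^2|\widehat\Lambda_n|)$, using $1-\sech^2=\tanh^2$; this separates the exponentially localized part of $\widecheck u_1$ from the $\calO(\varepsilon^2)$ part that does not decay and hence survives integration over the long interval $\widecheck I_f(\varepsilon)$. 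I would also invoke Lemma~\ref{lem:Nexp} to replace $\widecheck{\mathcal N}$ by $\widecheck{\mathcal N}_{\alg}(\tau;\varepsilon)=\varepsilon^2[1-2\pi\tau\tanh(\pi\tau)]+\varepsilon^4\tau^2$ up to $\calO(\rme^{-1/\varepsilon^2})$, and write $\widehat\lambda_n=-2n+\widehat\Lambda_n$.

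The next step is to note that at $z=z_0$ the hyperbolic prefactors collapse: by Table~\ref{tab:fastTerms}, $\tanh(\pi z_0),\coth(\pi z_0)=1+\calO(\rme^{-2\pi/\sqrt\varepsilon})$ while $\sech^2(\pi z_0),\ \pi z_0\sech^2(\pi z_0),\ (\cosh(\pi z_0)\sinh(\pi z_0))^{-1}=\calO(\rme^{-2\pi/\sqrt\varepsilon})$, so the outer factors $[\tanh(\pi z_0)+\pi z_0\sech^2(\pi z_0)]$ in (\ref{eq:FPfastBreveU1}) and $[\coth(\pi z_0)-\pi z_0\sech^2(\pi z_0)+(\cosh(\pi z_0)\sinh(\pi z_0))^{-1}]$ in (\ref{eq:FPfastBreveV1}) both equal $1$ up to exponentially small corrections. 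Moreover the $\sech^2(\pi z_0)$‑prefactored integrals contribute only exponentially small and $\calO_n(\varepsilon^{7/2})$ terms: the kernel $\sinh(\pi\tau)\cosh(\pi\tau)+\pi\tau$ grows like $\rme^{2\pi\tau}$, but it meets either the $\sech^2$‑part of $\widecheck u_1$ (taming the growth, so that after the $\sech^2(\pi z_0)$ factor the term is exponentially small) or the $\calO(\varepsilon^2)$ non‑decaying part and the $\calO(\varepsilon^{5/2})$ remainder, whose product with $\widecheck{\mathcal N}_{\alg}+\varepsilon^2\widehat\lambda_n=\calO(\varepsilon^2\tau)$ integrates to $\calO(\varepsilon^4 z_0\rme^{2\pi z_0})$, which $\sech^2(\pi z_0)$ turns into $\calO(\varepsilon^{7/2})$. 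Hence $\widecheck u_1(z_0)$ and $\widecheck v_1(z_0)$ both reduce, up to exponentially small and $\calO_n(\varepsilon^{7/2})$ errors, to $-\widecheck d_1\sech^2(\pi z_0)+\frac1{2\pi}\int_0^{z_0}\bigl(\widecheck{\mathcal N}(\tau;\varepsilon)+\varepsilon^2\widehat\lambda_n\bigr)\widecheck u_1(\tau)\,\rmd\tau$; in particular $\widecheck v_1(z_0)=\widecheck u_1(z_0)$ to that order.

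It then remains to evaluate that surviving integral, splitting $\widecheck u_1$ as above and using the closed‑form integrals already recorded in the proof of Proposition~\ref{prop:fastSoln} together with Table~\ref{tab:fastTerms}. Against the localized part $-\widecheck d_n(1+\tfrac{\varepsilon^2\tau^2}{2})\sech^2(\pi\tau)$, the factor $\varepsilon^2[1-2\pi\tau\tanh(\pi\tau)]$ contributes $\varepsilon^2\int_0^{z_0}\bigl(1-2\pi\tau\tanh(\pi\tau)\bigr)\sech^2(\pi\tau)\,\rmd\tau=\varepsilon^2 z_0\sech^2(\pi z_0)$ — exponentially small, using $\int_0^{z_0}\sech^2(\pi\tau)\,\rmd\tau=\tfrac1\pi\tanh(\pi z_0)$ and $\int_0^{z_0}2\pi\tau\tanh(\pi\tau)\sech^2(\pi\tau)\,\rmd\tau=\tfrac1\pi(\tanh(\pi z_0)-\pi z_0\sech^2(\pi z_0))$ — the $\varepsilon^4\tau^2\sech^2$ piece is $\calO(\varepsilon^4)$, and $\varepsilon^2\widehat\lambda_n\cdot(-\widecheck d_n\sech^2(\pi\tau))$ integrates to $-\tfrac{\varepsilon^2\widehat\lambda_n\widecheck d_n}{\pi}\tanh(\pi z_0)$, which with $\widehat\lambda_n=-2n+\widehat\Lambda_n$ and the $\tfrac1{2\pi}$ prefactor reproduces the leading term $\widecheck d_n\tfrac{n\varepsilon^2}{\pi^2}$ and the $\calO_n(\varepsilon^2|\widehat\Lambda_n|)$ error. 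Against the non‑decaying part $\widecheck d_n\tfrac{n\varepsilon^2}{\pi^2}\tanh^2(\pi\tau)$, only the factor $\varepsilon^2[1-2\pi\tau\tanh(\pi\tau)]$ matters (the $\varepsilon^4\tau^2$ and $\varepsilon^2\widehat\lambda_n$ factors give $\calO(\varepsilon^{9/2})$ and $\calO_n(\varepsilon^{7/2})$), and $\int_0^{z_0}\bigl(1-2\pi\tau\tanh(\pi\tau)\bigr)\tanh^2(\pi\tau)\,\rmd\tau=z_0-\pi z_0^2+\tfrac{\pi}{12}+\calO(\rme^{-2\pi z_0})$ via the $\tanh^2=1-\sech^2$ splitting and the (polylog) antiderivative of $\pi\tau\tanh(\pi\tau)$; with $z_0=\varepsilon^{-1/2}$ and the $\tfrac1{2\pi}$ prefactor this is $-\tfrac{\widecheck d_n n\varepsilon^3}{2\pi^2}+\calO_n(\varepsilon^{7/2})$. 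Collecting, $\widecheck u_1(z_0)=\widecheck v_1(z_0)=\widecheck d_n\bigl[\tfrac{n\varepsilon^2}{\pi^2}-\tfrac{n\varepsilon^3}{2\pi^2}+\calO_n(\varepsilon^{7/2}+\varepsilon^2|\widehat\Lambda_n|)\bigr]$, and multiplying by $\cosh(\pi z_0)$ and $\pi\sinh(\pi z_0)$ respectively yields (\ref{eq:fastSoln2}).

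The main obstacle is the bookkeeping in this last step: one must correctly classify every product (expansion term)$\times$(integral kernel) as exponentially small, as $\calO_n(\varepsilon^{7/2})$ or smaller and hence absorbable in the error, or as a genuine contributor to the $\varepsilon^2$ and $\varepsilon^3$ terms. The non‑obvious point that makes this tractable is the exact cancellation $\int_0^{z_0}\bigl(1-2\pi\tau\tanh(\pi\tau)\bigr)\sech^2(\pi\tau)\,\rmd\tau=z_0\sech^2(\pi z_0)$, which kills what would otherwise be an $\calO(\varepsilon^{3/2})$ contribution and isolates the $\tanh^2$‑piece as the unique algebraic source of the $\varepsilon^3$ correction; one also has to verify that the uniform $\calO_n(\varepsilon^{5/2}+\varepsilon^2|\widehat\Lambda_n|)$ remainder in $\widecheck u_1$, integrated against the $\calO(\varepsilon^{3/2})$‑sized kernel $\widecheck{\mathcal N}_{\alg}+\varepsilon^2\widehat\lambda_n$ over the $\calO(\varepsilon^{-1/2})$‑long interval $\widecheck I_f(\varepsilon)$, indeed feeds back only at order $\varepsilon^{7/2}$.
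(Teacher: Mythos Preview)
Your proposal is correct and follows exactly the approach indicated by the paper, which proves Proposition~\ref{prop:fastSoln2} in a single sentence (``substitute (\ref{eq:fastSoln}) back into (\ref{eq:FPfastBreve}) one more time''); you have simply carried out that substitution in detail, including the key cancellation $\int_0^{z_0}(1-2\pi\tau\tanh(\pi\tau))\sech^2(\pi\tau)\,\rmd\tau=z_0\sech^2(\pi z_0)$ and the identification of the $\tanh^2$ piece as the source of the $-\tfrac{n\varepsilon^3}{2\pi^2}$ correction.
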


\subsection{Gluing}\label{sec:matching}
Using the approximations to the eigenfunctions in the slow and fast variables, from Propositions~\ref{prop:slowSoln} and \ref{prop:fastSoln} respectively, we show that there exists a unique global eigenfunction for (\ref{eq:etildeprob})
\begin{align}\label{eq:etildeprob3}
\lambda_n\widetilde\varphi_n:=\nu\partial_{xx}\widetilde\varphi_n -\frac12\left[\partial_xW_0(x, t; \nu)+\frac1{2\nu}W_0^2(x, t; \nu)\right]\widetilde\varphi_n
\end{align}
which can be constructed by gluing a fast eigenfunction to a slow eigenfunction at the overlap point $x=\varepsilon^{3/2}$. Due to symmetry considerations, we glue $\widehat\varphi_n$ to $\widecheck\varphi_1$ for $n$ odd and to $\widecheck\varphi_2$ for $n$ even. The matching conditions can be understood as follows. We need both that the functions $\widehat\varphi_n$ and $\widecheck\varphi_n$ are the same at the matching point as well as their slopes 
\[
\frac{\rmd\widehat\varphi_n((x-\pi)/\varepsilon, \cdot)}{\rmd x} = \frac1\varepsilon\widehat\psi_n((x-\pi)/\varepsilon; \cdot)\qquad\text{and}\qquad \frac{\rmd\widecheck\varphi_n(x/\varepsilon^2; \cdot)}{\rmd x} =\frac1{\varepsilon^2}\widecheck\psi_n(x/\varepsilon^2; \cdot).
\]
Since (\ref{eq:etildeprob3}) is linear, any scalar multiple of $\widehat\varphi_n(\xi; \varepsilon)$ and $\widecheck\varphi_n(x;\varepsilon)$ is an eigenfunction in the appropriate scaling regime; thus, instead of matching the slopes directly we impose the condition that the ratio of the fast eigenfunction and its derivatives is equal to the ratio of the slow eigenfunction and its derivative at the matching point:
\begin{subequations}\label{eq:matching}
\begin{align}
f_{n,1}(\breve\Lambda_n; \varepsilon):=\varepsilon^2\left[\frac{\widehat\psi_n((x-\pi)/\varepsilon; \varepsilon, \breve\Lambda_n)}{\varepsilon\widehat\varphi_n((x-\pi)/\varepsilon; \varepsilon, \breve\Lambda_1)} - \frac{\widecheck\psi_{\mathrm{mod}(n,2)+1}(x/\varepsilon^2; \varepsilon, \widehat\lambda_n)}{\varepsilon^2\widecheck\varphi_{\mathrm{mod}(n,2)+1}(x/\varepsilon^2; \varepsilon, \widehat\lambda_1)}\right]\bigg|_{x=\varepsilon^{3/2}}=0\label{eq:ratios}
\end{align}
where
\begin{align*}
\widehat\lambda_1:= -2+\xi_0\rme^{-\xi_0^2}\breve\Lambda_1,\quad \widehat\lambda_2:=-4+\xi_0^3\rme^{-\xi_0^2}\breve\Lambda_2, \quad \widehat\lambda_3:=-6+ \xi_0^5\rme^{-\xi_0^2}\breve\Lambda_3,\quad \text{and}\quad\widehat\lambda_4:=-8+\xi_0^7\rme^{-\xi_0^2}\breve\Lambda_4.
\end{align*} 
The factor $\varepsilon^2$ in front regularizes the problem and can be thought of as taking the $z$, rather than $x$, derivatives. We observe that (\ref{eq:ratios}) has no explicit dependence on the magnitude of the eigenfunctions. Using the Implicit Function Theorem we will show that there exists a unique fixed point to (\ref{eq:ratios}) near $\varepsilon=\breve\Lambda_n=0$. For this $\breve\Lambda_n$, we ensure that the magnitude of the slow and fast eigenfunction at the same at the matching point by showing that there exists a unique $C_n$ such that
\begin{align}
f_{n,2}(C_n, \breve\Lambda_n(\varepsilon); \varepsilon) := \left[\widehat\varphi_n((x-\pi)/\varepsilon; \varepsilon, \breve\Lambda_n) - C_n\widecheck\varphi_{\mathrm{mod}(n,2)+1}(x/\varepsilon^2; \varepsilon, \widehat\lambda_n)\right]\bigg|_{x=\varepsilon^{3/2}}=0\label{eq:functions}
\end{align}
\end{subequations}
which we will again show is true using the Implicit Function Theorem. We start with condition (\ref{eq:ratios}). Using the expansions (\ref{eq:slowSoln}) and (\ref{eq:fastSoln}) at the matching point $x=\varepsilon^{3/2}$ (equivalently, $\xi = (-\pi+\varepsilon^{3/2})/\varepsilon$ and $z=1/\sqrt\varepsilon$) with coefficients in front of $\breve\Lambda_n$  given by (\ref{eq:Melnikov}) we get
\begin{align*}
\frac1\pi f_{1,1}(\breve\Lambda_1; \varepsilon)=& \frac{\left[1-\frac{\sqrt\pi}4\breve\Lambda_1+\calO(\varepsilon^{-2}\rme^{-2\pi/\sqrt\varepsilon}\ln\varepsilon + \varepsilon^2|\breve\Lambda_1|)\right]\left[1+\calO(\varepsilon^{3/2})\right]}{\left[1+\frac{\sqrt\pi}4\breve\Lambda_1+\calO(\varepsilon^{-2}\rme^{-2\pi/\sqrt\varepsilon}\ln\varepsilon +\varepsilon^2 |\breve\Lambda_1|)\right]}
\\
&\qquad-\frac{\left[1+\calO(\varepsilon^{3/2} +\frac1\varepsilon \rme^{-(-\pi+\varepsilon^{3/2})^2/2\varepsilon^2}|\breve\Lambda_1|)\right] \left[1+\calO(\rme^{-2\pi/\sqrt\varepsilon})\right]}{\left[1+\calO(\varepsilon^{3/2} +\frac1\varepsilon \rme^{-(-\pi+\varepsilon^{3/2})^2/2\varepsilon^2}|\breve\Lambda_1|)\right]}.
\end{align*}
\begin{align*}
\frac1\pi f_{2,1}(\breve\Lambda_2; \varepsilon)=& \frac{\left[1-\frac{\sqrt\pi}8\breve\Lambda_2+\calO(\varepsilon^{-2}\rme^{-2\pi/\sqrt\varepsilon}\ln\varepsilon + \varepsilon^2|\breve\Lambda_2|)\right]\left[1+\calO(\varepsilon^{3/2})\right]}{\left[1+\frac{\sqrt\pi}8\breve\Lambda_2+\calO(\varepsilon^{-2}\rme^{-2\pi/\sqrt\varepsilon}\ln\varepsilon + \varepsilon^2|\breve\Lambda_2|)\right]\left[1+\calO(\varepsilon^{3/2})\right]}
\\
&\qquad-\frac{\left[1+\calO(\varepsilon + \frac1{\sqrt\varepsilon} \rme^{-(-\pi+\varepsilon^{3/2})^2/2\varepsilon^2}|\widehat\Lambda_2|)\right]\left[1+\calO(\frac1{\sqrt\varepsilon}\rme^{-2\pi/\sqrt\varepsilon})\right]}{\left[1+\calO(\varepsilon + \frac1{\sqrt\varepsilon}\rme^{-(-\pi+\varepsilon^{3/2})^2/2\varepsilon^2}|\widehat\Lambda_2|)\right]\left[1+\calO(\frac1{\sqrt\varepsilon}\rme^{-2\pi/\sqrt\varepsilon})\right]}
\end{align*}
\begin{align*}
\frac 1\pi f_{3,1}(\breve\Lambda_3; \varepsilon)=& \frac{\left[1-\frac{\sqrt\pi}8\breve\Lambda_3+\calO(\varepsilon^{-2}\rme^{-2\pi/\sqrt\varepsilon}\ln\varepsilon + \varepsilon^2|\breve\Lambda_3|)\right]\left[1+\calO(\varepsilon^{3/2})\right]}{\left[1+\frac{\sqrt\pi}8\breve\Lambda_3+\calO(\varepsilon^{-2}\rme^{-2\pi/\sqrt\varepsilon}\ln\varepsilon + \varepsilon^2|\breve\Lambda_3|)\right]\left[1+\calO(\varepsilon^{3/2})\right]}
\\
&\qquad-\frac{\left[1+\calO(\varepsilon^{3/2} +\frac1{\varepsilon^3} \rme^{-(-\pi+\varepsilon^{3/2})^2/2\varepsilon^2}|\breve\Lambda_3|)\right] \left[1+\calO(\rme^{-2\pi/\sqrt\varepsilon})\right]}{\left[1+\calO(\varepsilon^{3/2} +\frac1{\varepsilon^3} \rme^{-(-\pi+\varepsilon^{3/2})^2/2\varepsilon^2}|\breve\Lambda_3|)\right]}
\end{align*}
\begin{align*}
\frac1\pi f_{4,1}(\breve\Lambda_4; \varepsilon)=& \frac{\left[1-\frac{3\sqrt\pi}{16}\breve\Lambda_4+\calO(\varepsilon^{-2}\rme^{-2\pi/\sqrt\varepsilon}\ln\varepsilon + \varepsilon^2|\breve\Lambda_4|)\right]\left[1+\calO(\varepsilon^{3/2})\right] }{\left[1+\frac{3\sqrt\pi}{16}\breve\Lambda_4+\calO(\varepsilon^{-2}\rme^{-2\pi/\sqrt\varepsilon}\ln\varepsilon + \varepsilon^2|\breve\Lambda_4|)\right]\left[1+\calO(\varepsilon^{3/2})\right]}
\\
&\qquad-\frac{\left[1+\calO(\varepsilon + \frac1{\varepsilon^2\sqrt\varepsilon} \rme^{-(-\pi+\varepsilon^{3/2})^2/2\varepsilon^2}|\widehat\Lambda_4|)\right]\left[1+\calO(\frac1{\sqrt\varepsilon}\rme^{-2\pi/\sqrt\varepsilon})\right]}{\left[1+\calO(\varepsilon + \frac1{\varepsilon^2\sqrt\varepsilon}\rme^{-(-\pi+\varepsilon^{3/2})^2/2\varepsilon^2}|\widehat\Lambda_4|)\right]\left[1+\calO(\frac1{\sqrt\varepsilon}\rme^{-2\pi/\sqrt\varepsilon})\right]}
\end{align*}

It is clear that $f_{n,1}(0; 0) = 0$ and 
\[
\frac{\rmd f_{n,1}}{\rmd\breve\Lambda_n}\bigg|_{(\breve\Lambda_n; \varepsilon)=(0;0)} \neq0
\]
so that the hypotheses of the Implicit Function Theorem are satisfied. Expanding the unique function $\breve\Lambda_n(\varepsilon)$ in orders of $\varepsilon$ we find
\[
\breve\Lambda_1 = \calO(\varepsilon^{3/2}),\qquad\breve\Lambda_2 = \calO(\varepsilon),\qquad\breve\Lambda_3 = \calO(\varepsilon^{3/2}),\quad\text{and}\quad\breve\Lambda_4 = \calO(\varepsilon).
\]
Next we solve (\ref{eq:functions}) using the expansions for $\breve\Lambda_n(\varepsilon)$ and obtain the expressions
\begin{align*}
\rme^{-\pi/\sqrt\varepsilon}f_{1,2}(C_1, \calO(\varepsilon^{3/2}); \varepsilon)&:=\left[1+\calO(\varepsilon^{3/2})\right]\rme^{-\pi^2/2\varepsilon^2}\rme^{-\varepsilon/2}-\frac{C_1\varepsilon^2}{\pi^2}\left[1-\varepsilon/2+\calO(\varepsilon^{3/2})\right]\left[\frac12+\calO(\rme^{-2\pi/\sqrt\varepsilon}) \right]\\
\rme^{-\pi/\sqrt\varepsilon}f_{2,2}(C_2, \calO(\varepsilon); \varepsilon)&:=\frac\pi\varepsilon\left[1+\calO(\varepsilon)\right]\left[-1+\calO(\varepsilon^{3/2})\right] \rme^{-\pi^2/2\varepsilon^2}\rme^{-\varepsilon/2} -\frac{C_2}{2\pi}\left[1+\calO(\varepsilon)\right]\left[\frac12+\calO\left(\frac1{\sqrt\varepsilon}\rme^{-2\pi/\sqrt\varepsilon}\right)\right]\\
\rme^{-\pi/\sqrt\varepsilon}f_{3,2}(C_3, \calO(\varepsilon^{3/2}); \varepsilon)&:=\frac{\pi^2}{\varepsilon^2}\left[1+\calO(\varepsilon^{3/2})\right]\left[2+\calO(\varepsilon^{3/2})\right] \rme^{-\pi^2/2\varepsilon^2}\rme^{-\varepsilon/2}-\frac{3C_3\varepsilon^2}{\pi^2}\left[1+\calO(\varepsilon)\right]\left[\frac12+\calO(\rme^{-2\pi/\sqrt\varepsilon}) \right]\\
\rme^{-\pi/\sqrt\varepsilon}f_{4,2}(C_4, \calO(\varepsilon); \varepsilon)&:=\frac{\pi^3}{\varepsilon^3}\left[1+\calO(\varepsilon)\right]\left[-2+\calO(\varepsilon^{3/2})\right] \rme^{-\pi^2/2\varepsilon^2}\rme^{-\varepsilon/2}-\frac{C_4}{2\pi}\left[1+\calO(\varepsilon)\right]\left[\frac12+\calO\left(\frac1{\sqrt\varepsilon}\rme^{-2\pi/\sqrt\varepsilon}\right)\right].
\end{align*}
We define
\[
2\pi^2\breve C_1:= \varepsilon^2\rme^{\pi^2/2\varepsilon^2}\rme^{\varepsilon/2}C_1,\quad -4\pi^2\breve C_2:= \varepsilon \rme^{\pi^2/2\varepsilon^2}\rme^{\varepsilon/2}C_2,\quad \frac{4\pi^4}3\breve C_3:= \varepsilon^4\rme^{\pi^2/2\varepsilon^2}\rme^{\varepsilon/2}C_3,\quad\text{and}\quad -8\pi^4\breve C_4:= \varepsilon^3\rme^{\pi^2/2\varepsilon^2}\rme^{\varepsilon/2}C_4
\]
and
\begin{align*}
\breve f_{1,2}(\breve C_1; \varepsilon):=& \rme^{(\pi-\varepsilon^{3/2})^2/2\varepsilon^2}f_{1,2}\left(\frac{2\pi^2}{\varepsilon^2}\rme^{-\pi^2/2\varepsilon^2}\rme^{-\varepsilon/2}\breve C_1, \calO(\varepsilon^{3/2}); \varepsilon \right)\\
\breve f_{2,2}(\breve C_2; \varepsilon):=& \varepsilon \rme^{(\pi-\varepsilon^{3/2})^2/2\varepsilon^2}f_{2,2}\left(-\frac{4\pi^2}{\varepsilon}\rme^{-\pi^2/2\varepsilon^2}\rme^{-\varepsilon/2}\breve C_2, \calO(\varepsilon); \varepsilon \right)\\
\breve f_{3,2}(\breve C_3; \varepsilon):=& \varepsilon^2\rme^{(\pi-\varepsilon^{3/2})^2/2\varepsilon^2}f_{3,2}\left(\frac{4\pi^4}{3\varepsilon^4}\rme^{-\pi^2/2\varepsilon^2}\rme^{-\varepsilon/2}\breve C_3, \calO(\varepsilon^{3/2}); \varepsilon \right)\\
\breve f_{4,2}(\breve C_4; \varepsilon):=& \varepsilon^3\rme^{(\pi-\varepsilon^{3/2})^2/2\varepsilon^2}f_{4,2}\left(-\frac{8\pi^4}{\varepsilon^3}\rme^{-\pi^2/2\varepsilon^2}\rme^{-\varepsilon/2}\breve C_4, \calO(\varepsilon); \varepsilon \right).
\end{align*}
Now it is clear that $\breve f_{n,2}\left(1; 0\right)=0$
and
\[
\frac{\rmd \breve f_{n,2}}{\rmd\breve C_n}\bigg|_{(\breve C_n; \varepsilon)=(1;0)} \neq0
\]
so that the hypotheses of the Implicit Function Theorem are again satisfied. Expanding the unique function $\breve C_n(\varepsilon)$ in orders of $\varepsilon$ we find $\breve C_n(\varepsilon) = 1+\calO(\varepsilon)$, and, in particular, $\breve C_1(\varepsilon) = 1+\varepsilon/2+\calO(\varepsilon^{3/2})$. 

Putting everything together, and recalling the definitions $\varepsilon:=\sqrt{2\nu t}$, $I_s(\varepsilon):=[\varepsilon^{3/2}, 2\pi-\varepsilon^{3/2}]$, $I_f(\varepsilon):= [-\varepsilon^{3/2}, \varepsilon^{3/2}]$, we get that 
\begin{alignat}{3}
\lambda_1 =& \frac1{2t}\left(-2+\calO(\xi_0\rme^{-\xi_0^2}\breve\Lambda_1)\right)=-1/t+\calO(\varepsilon^{1/2}\rme^{-1/\varepsilon^2}),\nonumber\\
\lambda_2 =& \frac1{2t}\left(-4+\calO(\xi_0^3\rme^{-\xi_0^2}\breve\Lambda_2)\right)=-2/t+\calO\left(\varepsilon^{-2}\rme^{-1/\varepsilon^2}\right),\nonumber\\
\lambda_3 =& \frac1{2t}\left(-6+\calO(\xi_0^5\rme^{-\xi_0^2}\breve\Lambda_3)\right)=-3/t+\calO\left(\varepsilon^{-7/2}\rme^{-1/\varepsilon^2}\right),\nonumber\\
\lambda_4 =& \frac1{2t}\left(-8+\calO(\xi_0^7\rme^{-\xi_0^2}\breve\Lambda_4)\right)=-4/t+\calO\left(\varepsilon^{-4}\rme^{-1/\varepsilon^2}\right)
\label{eq:etildeValues2}
\end{alignat}
are eigenvalues for (\ref{eq:etildeprob3}) with associated eigenfunctions $\widetilde\varphi_n(x;t,\nu)$ which can be expanded in the intervals $I_s(\varepsilon)$ and $I_f(\varepsilon)$ as follows:
\begin{subequations}\label{eq:tildephi2}
\begin{flalign}
\widetilde\varphi_1: &\left\{ 
\begin{array}{ccc}
\sup_x\left|\rme^{(x-\pi)^2/2\varepsilon^2}\widetilde\varphi_1(x; t,\nu) +1\right| \le C(\varepsilon_0) \varepsilon^{3/2}&: &x\in I_s(\varepsilon) \\
\sup_x\left|\frac{\varepsilon^2}{2\pi^2}\rme^{\pi^2/2\varepsilon^2}\sech\left(\frac{\pi x}{\varepsilon^2}  \right)\widetilde\varphi_1(x; t,\nu)-\left[\sech^2\left(\frac{\pi x}{\varepsilon^2} \right)\left(1+\frac{x^2}{2\varepsilon^2}+\frac{\varepsilon^2}{2\pi^2} \right)-\frac{\varepsilon^2}{2\pi^2}\right] \right|\le C(\varepsilon_0)\varepsilon^{3/2} &: &x\in I_f(\varepsilon)
\end{array}
\right\}&&
\end{flalign}
\begin{flalign}
\widetilde\varphi_2 : & \left\{ \begin{array}{ccc}
\sup_x\left|\frac{\varepsilon}{x-\pi} \rme^{(x-\pi)^2/2\varepsilon^2}\widetilde\varphi_2(x; t,\nu) +1\right|\le C(\varepsilon_0) \varepsilon & : &x\in I_s(\varepsilon)\\
\sup_x\left|\frac\varepsilon{2\pi} \rme^{\pi^2/2\varepsilon^2}\widetilde\varphi_2(x; t,\nu)-\left[\sinh\left(\frac{\pi x}{\varepsilon^2}\right)+\frac{\pi x}{\varepsilon^2}\sech\left(\frac{\pi x}{\varepsilon^2}\right)\right] \right|\le C(\varepsilon_0)\varepsilon& : &x \in I_f(\varepsilon)
\end{array}\right\}&&
\end{flalign}
\begin{flalign}
\widetilde\varphi_3:&\left\{ 
\begin{array}{ccc}
\sup_y\left|\frac{\varepsilon^2}{2\left(x-\pi \right)^2-\varepsilon^2}\rme^{(x-\pi)^2/2\varepsilon^2}\widetilde\varphi_3(x; t, \nu)+1\right|\le C(\varepsilon_0)\varepsilon^{3/2}&: &x\in I_s(\varepsilon) \\
\sup_y\left|\frac{3\varepsilon^4}{4\pi^4}\rme^{\pi^2/2\varepsilon^2}\sech\left(\frac{\pi x}{\varepsilon^2}  \right)\widetilde\varphi_3(x; t, \nu)-\sech^2\left(\frac{\pi x}{\varepsilon^2} \right) \right| \le C(\varepsilon_0)\varepsilon&: &x\in I_f(\varepsilon)
\end{array}
\right\}&&
\end{flalign}
\begin{flalign}
\widetilde\varphi_4: & \left\{ \begin{array}{ccc}
\sup_y\left|\frac{\varepsilon^3}{(x-\pi) \left[2\left(x-\pi \right)^2-3\varepsilon^2\right]}\rme^{(x-\pi)^2/2\varepsilon^2}\widetilde\varphi_4(x; t, \nu)+1\right| \le C(\varepsilon_0)\varepsilon& : &x\in I_s(\varepsilon)\\
\sup_y\left|\frac{\varepsilon^4}{4\pi^3} \rme^{\pi^2/2\varepsilon^2}\csch\left(\frac{\pi x}{\varepsilon^2}\right)\widetilde\varphi_4(x; t, \nu)-1\right|\le C(\varepsilon_0)\varepsilon & : &x \in I_f(\varepsilon)
\end{array}\right\}.&&
\end{flalign}
\end{subequations}
Equations (\ref{eq:etildeValues2}) and (\ref{eq:tildephi2}) are expansions (\ref{eq:etildeValues}) and (\ref{eq:tildephi}), respectively, in Proposition~\ref{evalProp}. Proposition~\ref{evalProp} now follows from following proposition and Sturm-Liouville theory for periodic boundary conditions (c.f. \cite[Thms 2.1, 2.14]{MangusWinkler}), which states that the eigenvalues are strictly ordered $\lambda_0>\lambda_1\ge\lambda_2>\lambda_3\ge\lambda_4>\ldots$ and that an eigenfunction with exactly $2n$ crossings of zero in $x\in[-\pi,\pi)$ is the eigenfunction associated either with $\lambda_{2n-1}$ or with $\lambda_{2n}$.
\begin{Proposition}
Fix $\varepsilon_0\ll1$ such that the eigenfunctions $\widetilde\varphi_n(x; t,\nu)$ are given as in (\ref{eq:tildephi2}) for all $0\le\varepsilon\le\varepsilon_0$ with $\varepsilon:=\sqrt{2\nu t}$. Then $\widetilde\varphi_1(x; t,\nu)$ and $\widetilde\varphi_2(x;t,\nu)$ have exactly two zeros in the interval $x\in[-\pi, \pi)$ and the eigenfunctions $\widetilde\varphi_3(x; t,\nu)$ and $\widetilde\varphi_4(x;t,\nu)$ have exactly four zeros in the interval $x\in[\varepsilon^{3/2}, 2\pi-\varepsilon^{3/2})$ for all $0\le\varepsilon\le\varepsilon_0$.
\end{Proposition}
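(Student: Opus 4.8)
The plan is to read the number of sign changes of each $\widetilde\varphi_n$ over one period directly off the glued eigenfunctions of Section~\ref{sec:matching}, using two structural facts. First, since $\widetilde\varphi_n$ solves the second-order scalar equation (\ref{eq:etildeprob3}), a common zero of $\widetilde\varphi_n$ and $\partial_x\widetilde\varphi_n$ would force $\widetilde\varphi_n\equiv 0$; hence every zero is simple, i.e.\ a genuine sign change, and it suffices to count sign changes. Second, $I_f(\varepsilon)=[-\varepsilon^{3/2},\varepsilon^{3/2}]$ and $I_s(\varepsilon)=[\varepsilon^{3/2},2\pi-\varepsilon^{3/2}]$ overlap only in the single point $x=\varepsilon^{3/2}$ and together form a fundamental domain for the $2\pi$-periodicity; the slow expansions in (\ref{eq:tildephi2}) moreover show that at $x=\varepsilon^{3/2}$ each $\widetilde\varphi_n$ equals a nonzero multiple of $\rme^{-(\varepsilon^{3/2}-\pi)^2/2\varepsilon^2}$, so no zero sits at the interface and the sign-change counts on $I_f(\varepsilon)$ and on $I_s(\varepsilon)$ simply add.

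On $I_s(\varepsilon)$, the expansions (\ref{eq:tildephi2}) present $\widetilde\varphi_n(x;t,\nu)$, up to a nowhere-vanishing factor of the form $\varepsilon^{-k_n}\rme^{-(x-\pi)^2/2\varepsilon^2}\bigl(-1+\calO(\varepsilon^{1/2})\bigr)$ with $k_n\in\{0,1,2,3\}$, as a fixed polynomial in $x-\pi$: the polynomials are $1$, $(x-\pi)$, $2(x-\pi)^2-\varepsilon^2$, $(x-\pi)\bigl(2(x-\pi)^2-3\varepsilon^2\bigr)$ for $n=1,2,3,4$, whose real zeros (none; $\{\pi\}$; $\{\pi\pm\varepsilon/\sqrt2\}$; $\{\pi,\ \pi\pm\sqrt{3/2}\,\varepsilon\}$) are simple and lie interior to $I_s(\varepsilon)$ once $\varepsilon$ is small. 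Since the accompanying factor never vanishes, $\widetilde\varphi_n$ has exactly $0,1,2,3$ sign changes on $I_s(\varepsilon)$ respectively.

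On $I_f(\varepsilon)$, write $z=x/\varepsilon^2\in[-1/\sqrt\varepsilon,1/\sqrt\varepsilon]=\widecheck I_f(\varepsilon)$. The key is that the companion expansions of Proposition~\ref{prop:fastSoln} for the derivatives $\widecheck\psi_n=\partial_z\widecheck\varphi_n$ force monotonicity of the fast eigenfunctions on each of $\{z>0\}$ and $\{z<0\}$: one checks from those expansions (together with the fact that, after the gluing of Section~\ref{sec:matching}, $\widehat\Lambda_n$ is exponentially small, so the stated error terms are $\rmo(\varepsilon^2)$ on $\widecheck I_f(\varepsilon)$) that $\widecheck\psi_1$ has the sign of $\widecheck d_n\sinh(\pi z)$ and $\widecheck\psi_2$ the sign of $\widecheck d_n$ on all of $\widecheck I_f(\varepsilon)$ --- for $\widecheck\psi_1$ because the bracket multiplying $\sinh(\pi z)$ is $\sech^2(\pi z)\bigl(1+\rmo(1)\bigr)+\tfrac{n\varepsilon^2}{\pi^2}+\rmo(\varepsilon^2)\ge c\varepsilon^2>0$ (with $n\in\{1,3\}$), and for $\widecheck\psi_2$ because the elementary inequality $\cosh^3 x+\cosh x-x\sinh x>0$ keeps the relevant bracket positive. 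Hence $\widetilde\varphi_2$ and $\widetilde\varphi_4$ (glued to $\widecheck\varphi_2$) are strictly monotone across $I_f(\varepsilon)$ except at the single point $z=0$ where, $\widecheck\varphi_2$ being odd with nonvanishing derivative at $0$, they have exactly one transversal sign change. And $\widetilde\varphi_1$ and $\widetilde\varphi_3$ (glued to $\widecheck\varphi_1$ with $n=1$ and $n=3$) are strictly monotone on each of $\{z<0\}$ and $\{z>0\}$; evaluating the $\widecheck\varphi_1$-expansion at $z=0$ gives $\widecheck\varphi_1(0)=\widecheck d_n\bigl(-1+\rmo(1)\bigr)$, while Proposition~\ref{prop:fastSoln2} gives $\widecheck\varphi_1(\pm1/\sqrt\varepsilon)=\widecheck d_n\bigl(\tfrac{n\varepsilon^2}{\pi^2}+\rmo(\varepsilon^2)\bigr)\cosh(\pi/\sqrt\varepsilon)$, a quantity of the opposite sign; by monotonicity there is therefore exactly one sign change on $\{z>0\}$ and, by evenness, one on $\{z<0\}$, i.e.\ exactly two sign changes of $\widetilde\varphi_1$ and of $\widetilde\varphi_3$ on $I_f(\varepsilon)$.

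Adding the two regions gives $0+2$, $1+1$, $2+2$, $3+1$ zeros of $\widetilde\varphi_1,\widetilde\varphi_2,\widetilde\varphi_3,\widetilde\varphi_4$ per period, i.e.\ $2,2,4,4$; this is exactly what the statement asserts, and, fed into Sturm--Liouville theory for periodic problems (\cite[Thms 2.1, 2.14]{MangusWinkler}), it identifies $\widetilde\varphi_1,\widetilde\varphi_2$ as eigenfunctions for $\lambda_1,\lambda_2$ and $\widetilde\varphi_3,\widetilde\varphi_4$ as eigenfunctions for $\lambda_3,\lambda_4$, completing the proof of Proposition~\ref{evalProp}. The step I expect to be the main obstacle is the sign bookkeeping for $\widetilde\varphi_1$ and $\widetilde\varphi_3$ near the ends $z=\pm1/\sqrt\varepsilon$ of $I_f(\varepsilon)$: there the ``leading'' profile $\sech^2(\pi z)$ is exponentially small while the naive error in (\ref{eq:tildephi2}) is only polynomially small, so one genuinely has to use the sharper expansions of Propositions~\ref{prop:fastSoln} and \ref{prop:fastSoln2} (or, equivalently, the matching conditions of Section~\ref{sec:matching}, which fix $\widetilde\varphi_n(\varepsilon^{3/2})$ at the exponentially small value dictated by the slow side) to be certain that the two sign changes really are present and that no spurious ones appear in the overlap. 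Everything else is elementary sign-chasing combined with the monotonicity observations above.
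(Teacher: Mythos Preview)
Your argument is correct, but you take a noticeably different route from the paper for the odd-index eigenfunctions. The paper handles only $n=2,4$ by direct zero counting --- exactly as you do, reading one zero of the odd fast profile at $z=0$ and one (respectively three) zeros of the slow polynomial $(x-\pi)$ (respectively $(x-\pi)(2(x-\pi)^2-3\varepsilon^2)$) on $I_s(\varepsilon)$ --- and then disposes of $n=1,3$ in a single sentence: since the five computed eigenvalues are \emph{distinct}, the strict inequalities $\lambda_0>\lambda_1>\lambda_2>\lambda_3>\lambda_4$ together with Sturm--Liouville theory force the eigenfunction associated with $\lambda_1$ (respectively $\lambda_3$) to have exactly two (respectively four) zeros. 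No analysis of $\widecheck\varphi_1$ near the edge of $I_f(\varepsilon)$ is needed at all.

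Your approach instead counts the zeros of $\widetilde\varphi_1,\widetilde\varphi_3$ directly on $I_f(\varepsilon)$, using the monotonicity of $\widecheck\varphi_1$ on each half (via the positivity of the bracket in the $\widecheck\psi_1$ expansion of Proposition~\ref{prop:fastSoln}) and the sign flip between $\widecheck\varphi_1(0)$ and $\widecheck\varphi_1(\pm z_0)$ furnished by Proposition~\ref{prop:fastSoln2}. This is valid and self-contained, and it is precisely the ``sign bookkeeping near $z=\pm 1/\sqrt\varepsilon$'' that you flag as the main obstacle --- the paper's trick simply avoids that obstacle altogether by trading it for the (already established) eigenvalue separation. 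What your route buys is independence from the eigenvalue ordering: you never need to know that the five constructed eigenvalues are pairwise distinct, only that the expansions in Propositions~\ref{prop:slowSoln}, \ref{prop:fastSoln}, and \ref{prop:fastSoln2} hold in their multiplicative form. What the paper's route buys is brevity and robustness: the delicate competition between the exponentially small $\sech^2(\pi z)$ and the $\calO(\varepsilon^2)$ correction near the overlap is never confronted.
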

\begin{proof}
The $n=2,4$ cases are clear since $\sinh(\pi x/\varepsilon)=0$ at $x=0\in I_f(\varepsilon)$, $\frac{x-\pi}\varepsilon$ has a single zero at $x=\pi\in I_s(\varepsilon)$, and $2\left(\frac{x-\pi}\varepsilon\right)^2-3$ has two zeros at $x=\pi\pm\varepsilon\sqrt{3/2}\in I_s(\varepsilon)$, and by making $\varepsilon_0$ potentially smaller so that $-1+\calO(\varepsilon_0) < 0$. The result for $n=1,3$ is then a direct consequence of Sturm-Liouville theory since $\lambda_0>\lambda_1>\lambda_2$ and $\lambda_2>\lambda_3>\lambda_4$. 
\end{proof}
\section{Discussion}\label{sec:discussion}
In this work we have proposed a candidate metastable family for Burgers equation with periodic boundary conditions, which we denote $W(x,t;\nu, x_0, c)$. The metastable family depends on space and time and is parametrized by three parameters: the spatial location $x_0$, the ``initial" time $t_0$ (so that $t=t_0+\tau$), and mean $c_0$. Our choice of metastable family was motivated by our numerical experiments, one example of which is shown in Figure~\ref{fig:numerics}. We furthermore proposed an explanation for the metastable behavior of $W(x,t;\nu, x_0, c)$ based on the spectrum of the operator $\mathcal L$ which results from linearizing the Burgers equation about $W(x,t_*;\nu, x_*, c_*)$.
 In particular, we showed that a solution to the Burgers equation $u(x,t;\nu)$ which is close at some time $t_0$ to a profile in the metastable family (i.e. $u(x,t_0;\nu) = W(x,t_0; \nu, x_0, c_0)+v_0(x; t_0, x_0, c_0;\nu)$ with $\|v_0\|$ small) can be written as a perturbation from a (potentially different) profile $W(x,t_*;\nu, x_*, c_*)$ such that projection of the perturbation of $u(x,t_0;\nu)$ from $W(x,t_*;\nu, x_*, c_*)$ onto the span of the first three eigenfunctions associated with the linearization of the Burgers equation about $W(x,t_*;\nu, x_*, c_*)$ is zero. 
 These results are summarized in Theorems~\ref{evalThm} and \ref{convergeThm}. From a technical perspective, we derived the first five eigenvalues for $\mathcal L$ using Sturm-Liouville theory and ideas from singular perturbation theory. In particular, we show that there are two relevant space regimes which we call the ``slow" and ``fast" space scales; we construct the eigenfunctions in each regime separately and then rigorously glue the functions together using a Melnikov-like computation.

 As noted in Section \ref{sec:justify}, we regard these results as a first step toward showing that once solutions of Burgers equation are close to the family of Whitham solutions, they  subsequently evolve toward it at a rate much faster than the motion along the family itself.  The problem is that the 
 linearized evolution operator in \eqref{eq:linearized}-\eqref{eq:eprob} is non-autonomous and as
 is well known, in general, information on the spectrum of a non-autonomous, linear vector field
 does not immediately lead to bounds on its evolution.  Furthermore, even
 leaving aside the time dependence,  the operator in \eqref{eq:eprob} is highly non-self-adjoint
 which leads to further problems in deducing information about the evolution just from spectral
 data.  Such operators arise frequently in fluid mechanics and a number of different approaches
 have been proposed to deal with these issues (\cite{Beck2013,Gallagher2009,Deng2011,Bedrossian15}.)
 
 In the present case we feel that the spectral information is of greater use than is generally true for 
 two reasons - first, the transformation described in Section \ref{sec:evalue}, which shows that
 there is a bounded and invertible change of variables which conjugates the linearized operator
 \eqref{eq:eprob} to a self-adjoint operator, and second, the method of ``freezing coefficients'' which
 shows that for linear, non-autonomous equations in which the time-change occurs slowly, the
 spectral information does give good insight into the evolution of the solutions \cite{Vinograd1983}.
 In this case, the slow change in the vector-field is a consequence of the slow evolution along the
 family of Whitham solutions.  To provide a few more details of why we feel the solutions of
 Burgers should
 evolve in a fashion similar to
 that predicted by the spectral estimates established here, consider the linearized equation, written
 in self-adjoint form, i.e.
 \begin{equation}\label{eq:tilde}
 \widetilde{u}_t = \widetilde{\cal{L}}(\nu,t) \widetilde{u}\ ,
 \end{equation}
 where $\widetilde{u} = {\mathcal T}^{-1} u$ with ${\mathcal T}$ defined in \eqref{eq:tildeTrans},
 and $\widetilde{\cal L}$ defined in \eqref{eq:etildeprob}.
 
 We have computed the first four eigenvalues in the spectrum of $ \widetilde{\cal{L}}(\nu,t)$ for all $t$
 sufficiently large, so fix $t_0$ and set ${\widetilde{\cal L}}_0 = \widetilde{\cal{L}}(\nu,t_0)$ and define
 $a(\tau) =  \widetilde{\cal{L}}(\nu,t_0+ \tau ) -  \widetilde{\cal{L}}(\nu,t_0)$.

 Then 
 \begin{equation}
 \widetilde{u}_t = \widetilde{\cal{L}}(\nu,t) \widetilde{u} =  {\widetilde{\cal L}}_0\widetilde{u} + a(\tau) \widetilde{u}\ ,
 \end{equation}

 We can write the solution of this equation with the aid of DuHamel's formula as
 \begin{equation}\label{eq:duhamel}
 \widetilde{u}(\tau) = e^{\tau   {\widetilde{\cal L}}_0} \widetilde{u} + \int_0^{\tau} e^{(\tau -\sigma)   {\widetilde{\cal L}}_0} 
a(\sigma) \widetilde{u}(\sigma) d\sigma\ . 
\end{equation}

The leading order term is easy to estimate since we know (thanks to
Theorem 2) that $\widetilde{u}$ is orthogonal to the eigenfunctions $\widetilde{\phi}_0$,
$\widetilde{\phi}_1$, and $\widetilde{\phi}_2$ (of ${\widetilde{\cal L}}_0$).  In fact, thanks to fact that Burger's equation (and also the
linearized equation \eqref{eq:linearized}) preserve the mean value of the solution we can assume
without loss of generality that $\widetilde{u}(\tau)$ is orthogonal to $\widetilde{\phi}_0$ for all $\tau$.
Thus, let $P$ be the orthogonal projection onto the span of $\widetilde{\phi}_1$, and $\widetilde{\phi}_2$
and let $Q$ be its orthogonal complement.  To analyze the integral term in \eqref{eq:duhamel},
we break it up as
\begin{eqnarray}
\int_0^{\tau} e^{(\tau -\sigma)   {\widetilde{\cal L}}_0} 
a(\sigma) \widetilde{u}(\sigma) d\sigma &=& \int_0^{\tau} e^{(\tau -\sigma)   {\widetilde{\cal L}}_0} 
P a(\sigma) P \widetilde{u}(\sigma) d\sigma + \int_0^{\tau} e^{(\tau -\sigma)   {\widetilde{\cal L}}_0} 
Q a(\sigma) Q \widetilde{u}(\sigma) d\sigma \\
&& \qquad + \int_0^{\tau} e^{(\tau -\sigma)   {\widetilde{\cal L}}_0} 
P a(\sigma) Q \widetilde{u}(\sigma) d\sigma
+\int_0^{\tau} e^{(\tau -\sigma)   {\widetilde{\cal L}}_0} 
Q a(\sigma) P \widetilde{u}(\sigma) d\sigma\ .
\end{eqnarray}

At this point, our current estimates are not sufficient to analyze all the terms in this expression
in detail. However, we believe that leading order contribution comes from the first term
on the right-hand side of this expression.  For instance, the last two terms involve projections
$P a(\tau) Q$ and $Q a(\tau) P$ on complementary spectral subspaces and hence are
probably small, at least for $\tau$ small.  Likewise, the second term involves the evolution
of the part of the solution that lies  in the spectral subspace complementary to 
the span of $\widetilde{\phi}_0$,
$\widetilde{\phi}_1$, and $\widetilde{\phi}_2$ and hence is expected to decay like $e^{-\frac{3}{t_0} \tau}$.
Thus, we focus on the first integral expression.  We can write out the spectral projection $P$
in terms of inner products with $\widetilde{\phi}_1$, and $\widetilde{\phi}_2$ and we find
\begin{eqnarray}\label{eq:projections}
\int_0^{\tau} e^{(\tau -\sigma)   {\widetilde{\cal L}}_0} 
P a(\sigma) P \widetilde{u}(\sigma) d\sigma &=&
\left( \int_0^{\tau} e^{-\frac{1}{t_0} (\tau -\sigma) } (\widetilde{\phi}_1, a(\sigma) \widetilde{\phi}_1)
( \widetilde{\phi}_1, \widetilde{u}(\sigma)) d\sigma \right) \widetilde{\phi}_1 \\ \nonumber
&& \qquad +\left( \int_0^{\tau} e^{-\frac{2}{t_0} (\tau -\sigma) } (\widetilde{\phi}_2, a(\sigma) \widetilde{\phi}_2)
( \widetilde{\phi}_2, \widetilde{u}(\sigma)) d\sigma \right) \widetilde{\phi}_2
\end{eqnarray}
Note that in this expression we have used the fact that cross terms involving 
$\widetilde{\phi}_1$, and $\widetilde{\phi}_2$ will vanish by symmetry, and we have made the
approximation that the eigenvalues $\lambda_1$ and $\lambda_2$ are exactly
$-1/t_0$ and $-2/t_0$ for simplicity.

Now consider the inner products $(\widetilde{\phi}_j, a(\sigma) \widetilde{\phi}_j)$ that occur in the
integrands.  From the perturbation theory for linear operators, we know that if we perturb
${\widetilde{\cal L}}_0$ by $a(\tau)$, the first order shift in the eigenvalue $\lambda_j$ should be given
by exactly this inner product.  On the other hand, we know from our calculation of the 
spectrum that that the shift in the eigenvalue is given by
\begin{equation}
\delta \lambda_j = -\frac{ j}{t_0 + \tau} + \frac{ j}{t_0 } \sim \frac{j \tau}{t_0^2}\ .
\end{equation}

Thus, we expect the integrals in \eqref{eq:projections} to behave like
\begin{equation}
\frac{C}{t_0^2} \int_0^{\tau} e^{-\frac{j}{t_0}(\tau -\sigma)} \sigma (\widetilde{\phi}_j, \widetilde{u}(\sigma)) d\sigma
\end{equation}
 Since $(\widetilde{\phi}_j, \widetilde{u}(0)) = 0$ we expect that this inner product is bounded by
 $C \sigma \|\widetilde{u}_0 \|$, at least for $\sigma$ small, and hence the integrals in 
  \eqref{eq:projections} are expected to behave like
 \begin{equation}
\frac{C \|  \widetilde{u}_0 \| }{t_0^2} \int_0^{\tau} e^{-\frac{j}{t_0}(\tau -\sigma)} \sigma^2   d\sigma
\sim \frac{C \|  \widetilde{u}_0 \| }{t_0^2} \tau^3 \ ,
\end{equation} 
for $\tau$ small.

These estimates lead us to expect a bound on solutions of \eqref{eq:duhamel}
of the form
\begin{equation}
\| \widetilde{u}(\tau) \| \le C e^{-\frac{3}{t_0} \tau} + \frac{C \|  \widetilde{u}_0 \| }{t_0^2} \tau^3\ ,
\end{equation}
which for $\tau$ small, {\em but of order one}, is much faster decay than the rate of motion
along the family of Whitham solutions.  After some fixed time $\tau_0$, we stop the 
evolution with the ``frozen'' time operator ${\widetilde{\cal L}}(t_0)$ and restart the
process of tracking solutions of \eqref{eq:tilde} by approximating ${\widetilde{\cal L}}(t)$ by
${\widetilde{\cal L}}(t_0+\tau_0)$.  However, now, the initial condition for the equation
will be much closer to the manifold of Whitham solutions than the original
initial condition for \eqref{eq:tilde}.  We also note the similarity of this approach to the
renormalization method of \cite{Promislow2002} - see Fig. 2.2 of that reference.

Although our current estimates are not sufficient to rigorously establish the bounds in the 
previous paragraph, which we leave as an open problem, we feel that ubiquity of the
type of non-self-adjoint operators exemplified by ${\cal L}$ in fluid mechanics, along with
the paucity of rigorous estimates of their spectral behavior makes the results presented in this
paper of interest, even though they do not conclusively prove that solutions approach the
Whitham family with the expected rate.  In addition, we feel that the methods derived in 
this paper for studying the behavior of multiple eigenvalues of singularly perturbed spectral
problems may be of independent interest.

It also is worth reiterating that our results show that the spectrum for $\mathcal L$ is, to leading-order, independent of the viscosity $\nu$; this result is particularly interesting since our analysis is not valid for the inviscid equation. Furthermore, our results are in contrast to \cite{Beck2013}, in which the authors proposed an analytical description of the ``bar" metastable family for the Navier--Stokes equation with periodic boundary conditions which were observed numerically in \cite{Yin2003}, denoted $\omega^b$. In \cite{Beck2013} the authors provided numerical evidence and analytical arguments which indicate that the real part of the least negative eigenvalue for the operator obtained from linearizing the Navier--Stokes equation about $\omega^b$ is proportional to $\sqrt\nu$; in other words, the metastable behavior of $\omega^b$ does depend on the viscosity. On the other hand, in \cite{Bedrossian15}, Bedrossian, Masmoudi and Vicol show that the solution behavior for the Navier--Stokes equation in a neighborhood of the Couette flow depends on the time-regime: for small enough time scales the solution behavior is governed by the inviscid limit of Navier--Stokes, whereas viscid effects dominate after long enough times. Thus, our results raise the question about whether there is an even earlier time regime for the Navier--Stokes with periodic boundary conditions than that studied in \cite{Beck2013}, and a potentially different metastable family, in which convergence to a metastable family is independent of the viscosity.

\begin{Acknowledgment}
The authors wish to thank M. Beck for numerous discussions of metastability in Burgers equations and the Navier--Stokes equation and to thank C.K.R.T. Jones, T. Kaper, and B. Sandstede for discussions of Lin's method and singularly perturbed eigenvalue problems. We also appreciate the thoughtful comments and helpful suggestions of the anonymous referees. The work of CEW is supported in part by the NSF grant DMS-1311553. 
\end{Acknowledgment}
\newpage
\appendix
\renewcommand{\theequation}{\Alph{section}.\arabic{equation}}
\renewcommand{\theLemma}{\Alph{section}.\arabic{Lemma}}
\section{Notation}\label{app:notation}
\begin{longtable} {L C R}
\toprule
Variable & Description & Defined in\\
\midrule
$\psi^W(x,t;\nu)$&A solution to the periodic heat equation. It is also used to define transformation (\ref{eq:tildeTrans})&Equation (\ref{eq:psi})\\
$W_0(x,t;\nu)$ & An exact solution to the periodic Burgers equation (\ref{eq:Burgers}) constructed from $\psi^W(x,t;\nu)$ via the Cole--Hopf transformation. &Equation (\ref{eq:uW})\\
$W(x,t;\nu, \Delta x, c)$ & The family of metastable solutions, parametrized by $\Delta x$, $t$, and $c$, given by $W(x, t; \nu, \Delta x, c):=c+W_0(x-\Delta x-ct,t; \nu)$& Section~\ref{sec:Whit}\\
$\mathcal L(\nu,t)$ &  The time-dependent linear operator obtained from linearizing (\ref{eq:Burgers}) about the solution family $W_0(x,t;\nu)$& Equation (\ref{eq:eprob})\\
$\widetilde{\mathcal L}(\nu,t)$ & The time-dependent self-adjoint linear operator associated with $\mathcal L(\nu,t)$ after transforming the eigenfunctions $\varphi_n$ into $\widetilde\varphi_n$ via (\ref{eq:tildeTrans}) & Equation (\ref{eq:etildeprob})\\
$\mathcal T(x;t,\nu)$ & The transformation which maps eigenfunctions for $\widetilde{\mathcal L}(\nu,t)$ into eigenfunctions for $\mathcal L(nu,t)$ & Equation (\ref{eq:tildeTrans})\\
$(\lambda_n,\varphi_n(x; t_0,\nu))$ & Solutions to the frozen-time eigenvalue problem $\lambda_n\varphi = \mathcal L(\nu,t_0)\varphi_n$& Equation (\ref{eq:eprob})\\
$(\lambda_n,\widetilde\varphi_n(x;t_0,\nu))$ & Solutions to the associated frozen-time self-adjoint eigenvalue problem $\lambda_n\widetilde\varphi_n =\widetilde{\mathcal L}(\nu,t_0)\widetilde\varphi_n$ & Equation (\ref{eq:etildeprob}). Note: $\varphi_n$ and $\widetilde\varphi_n$ are related via transformation (\ref{eq:tildeTrans})\\
$x_0$, $t_0$ & Initial parameter values such that the frozen time solution $u(x, t_0;\nu)$ to (\ref{eq:Burgers}) is near $W(x,t_0; \nu, x_0, c)$ & Theorem~\ref{convergeThm}, Section~\ref{sec:outline}\\
$x_*$, $t_*$ & Perturbed parameter values so that the frozen time solution $u(x, t_0;\nu)$ to (\ref{eq:Burgers}) is near $W(x,t_*; \nu, x_*, c)$ and the projection of the perturbation onto the subspace spanned by the eigenfunctions corresponding to the first three eigenvalues is zero &Theorem~\ref{convergeThm}, Section~\ref{sec:outline} \\
$\varepsilon:=\sqrt{2\nu t}$ & Small parameter used in singular perturbation arguments & Section~\ref{sec:outline} and again in Proposition~\ref{evalProp} \\
$I_s(\varepsilon)$, $I_f(\varepsilon)$ & The spatial intervals where the slow equation and fast equation dominate, respectively & Proposition~\ref{evalProp}; see also Figure~\ref{fig:tildePhi}\\
\bottomrule
\caption{General notation used throughout this work.}
\label{tab:notation}
\end{longtable}

\newpage

\begin{longtable} {L C R}
\toprule
Variable & Description & Defined in\\
\midrule
$\xi :=\frac{x-\pi}\varepsilon$ &  Slow spatial variable & Beginning of Section~\ref{sec:slow}\\
$\widehat I_s(\varepsilon)$ & The slow interval $I_s(\varepsilon)$ in terms of $\xi$ & Beginning of Section~\ref{sec:slow}  \\
$\widehat W(\xi;\varepsilon)$ &$W_0(x,t;\nu)$ written in terms of $\xi$ and scaled by $\frac t\varepsilon$ & Beginning of Section~\ref{sec:slow}  \\
$\widehat W_\xi(\xi;\varepsilon)$ &$\partial_xW_0(x,t;\nu)$ written in terms of $\xi$ and scaled by $t$ & Beginning of Section~\ref{sec:slow}  \\
$\widehat \varphi_n(\xi)$ & The eigenfunction $\widetilde\varphi_n(x)$ in terms of $\xi$ & Beginning of Section~\ref{sec:slow}  \\
$\widehat \lambda_n := 2t\lambda_n$ & A transformation of the eigenvalue $\lambda_n$ & Beginning of Section~\ref{sec:slow}  \\
$\widehat \Lambda_n := \widehat\lambda_n+2n$ & Perturbation of the eigenvalue $\widehat\lambda_n$ from $-2n$, the eigenvalue anticipated by the formal analysis of the slow variables in Section~\ref{sec:overview}& Before Lemma~\ref{lemma:Nhat}  \\
$\widehat U_n$ & 2-component vector representation of the eigenfunction $\widehat\varphi_n$, used to make the eigenvalue problem first order& Before Lemma~\ref{lemma:Nhat}  \\
$\widehat{\mathcal A}_n(\xi)$ & A $2\times 2$ non-autonomous real matrix giving the leading order terms in the eigenvalue problem for $\widehat U_n$ & Before Lemma~\ref{lemma:Nhat}, part of (\ref{eq:slowFirst})  \\
$\widehat{\mathcal N}_n(\widehat U_n, \xi; \varepsilon, \widehat\Lambda_n)$ & A $2\times 1$ real vector giving the higher order terms in the eigenvalue problem for $\widehat U_n$ & Before Lemma~\ref{lemma:Nhat}, part of (\ref{eq:slowFirst})  \\
$\widehat{\mathcal N}(\xi; \varepsilon)$ & The part of $\widehat{\mathcal N}_n(\widehat U_n, \xi; \varepsilon, \widehat\Lambda_n)$ that comes from the difference between the formal slow-variable potential with $\varepsilon=0$ (\ref{eq:eslow0}) and the potential in the slow-variable eigenvalue problem (\ref{eq:eslow2})& Before Lemma~\ref{lemma:Nhat}, part of (\ref{eq:slowFirst})  \\
$\xi_0 $& The point at which the eigenfunctions in each of the scaling regimes will be matched at in terms of $\xi$ & Before Proposition~\ref{prop:slowSoln} \\
$\breve \Lambda_n$ & Exponential rescaling of the eigenvalue offset $\widehat\Lambda_n$; necessary for an Implicit Function Theorem argument & Proposition~\ref{prop:slowSoln}  \\
$H_{n-1}(\xi)\rme^{-\xi^2/2}$ & Eigenfunction solutions to the formal slow-variable potential with $\varepsilon=0$ (\ref{eq:eslow0}) with $\widehat\lambda_n=-2n$ & After equation (\ref{eq:eslow0}) \\
\bottomrule
\caption{Notation used for the slow variable analysis in Section~\ref{sec:slow}.}
\label{tab:notationSlow}
\end{longtable}

\newpage

\begin{longtable} {L C R}
\toprule
Variable & Description & Defined in\\
\midrule
$z:=\frac x{\varepsilon^2}$ &  Fast spatial variable & Beginning of Section~\ref{sec:fast}\\
$\widecheck I_f(\varepsilon)$ & The fast interval $I_f(\varepsilon)$ in terms of $z$ & Beginning of Section~\ref{sec:fast}  \\
$\widecheck W(z;\varepsilon)$ &$W_0(x,t;\nu)$ written in terms of $z$ and scaled by $t$ & Beginning of Section~\ref{sec:fast}  \\
$\widecheck W_z(z;\varepsilon)$ &$\partial_xW_0(x,t;\nu)$ written in terms of $z$ and scaled by $t\varepsilon^2$ & Beginning of Section~\ref{sec:fast}  \\
$\widecheck \varphi_n(z)$ & The eigenfunction $\widetilde\varphi_n(x)$ in terms of $z$ & Beginning of Section~\ref{sec:fast}  \\
$\widecheck U_n$ & 2-component vector representation of the eigenfunction $\widecheck\varphi_n$, used to make the eigenvalue problem first order& Before Lemma~\ref{lem:Nexp} \\
$\widecheck{\mathcal A}_n(z)$ & A $2\times 2$ non-autonomous real matrix giving the leading order terms in the eigenvalue problem for $\widecheck U_n$ & Before Lemma~\ref{lem:Nexp},  part of (\ref{eq:fastFirst})  \\
$\widecheck{\mathcal N}_n(\widecheck U_n, z; \varepsilon, \widehat\Lambda_n)$ & A $2\times 1$ real vector giving the higher order terms in the eigenvalue problem for $\widecheck U_n$& Before Lemma~\ref{lem:Nexp}, part of (\ref{eq:fastFirst})   \\
$\widecheck{\mathcal N}(z; \varepsilon)$ & The part of $\widecheck{\mathcal N}_n(\widecheck U_n, z; \varepsilon, \widehat\Lambda_n)$ that comes from the difference between the formal fast-variable potential with $\varepsilon=0$ (\ref{eq:efast0}) and the potential in the fast-variable eigenvalue problem (\ref{eq:efast2})& Before Lemma~\ref{lem:Nexp}  \\
$\widecheck{\mathcal N}_{\mathrm{alg}}(z; \varepsilon)$ & The part of $\widecheck{\mathcal N}(z; \varepsilon)$ that behaves algebraically& Lemma~\ref{lem:Nexp} \\
$\widecheck{\mathcal N}_{\mathrm{exp}}(z; \varepsilon)$ & The part of $\widecheck{\mathcal N}(z; \varepsilon)$ that behaves exponentially& Lemma~\ref{lem:Nexp}  \\
$z_0 $& The point at which the eigenfunctions in each of the scaling regimes will be matched at in terms of $z$ & Before Proposition~\ref{prop:fastSoln} \\
$P(z)$, $Q(z)$ & Two linearly independent solutions to the formal fast-variable equation with $\varepsilon=0$ (\ref{eq:efast0}) & After equation (\ref{eq:efast0}) \\
\bottomrule
\caption{Notation used for the fast variable analysis in Section~\ref{sec:fast}.}
\label{tab:notationFast}
\end{longtable}

\bibliographystyle{SIADS}
\bibliography{periodicBurgers}

\end{document}